\documentclass[12pt]{article}
\usepackage{graphicx} 
\usepackage{amsmath}
\usepackage{amsthm}
\usepackage{amssymb}
\usepackage{enumitem}
\usepackage{geometry}
\usepackage{mathrsfs}
\usepackage{lmodern}
\usepackage{upgreek}
\usepackage[
backend=biber,
sorting=nyt
]{biblatex}
\usepackage[colorlinks=true,
linkcolor=Peach,
citecolor=Peach]{hyperref}
\usepackage[dvipsnames, svgnames]{xcolor}
\hypersetup{
colorlinks=true,
linktoc=all,
linkcolor=RoyalBlue
}
\usepackage{csquotes}
\usepackage[T1]{fontenc} 
\usepackage{dsfont}
\title{Fluctuations of the planar random walk sausage}
\author{Maxence Baccara \thanks{Ecole Polytechnique, Palaiseau, France. maxence.baccara@polytechnique.edu} \and  Joelle Ishak \thanks{Université Sorbonne Paris Nord, Villetaneuse, France. ishak@math.univ-paris13.fr}}

\newcommand{\ZZ}{\mathbb{Z}}
\newcommand{\PP}{\mathbb{P}}
\newcommand{\EE}{\mathbb{E}}
\newcommand{\RR}{\mathbb{R}}
\newcommand{\R}{\mathcal{R}}
\newcommand{\I}{\mathcal{I}}
\newcommand{\J}{\mathcal{J}}
\newcommand{\TT}{\mathbb{T}}
\newcommand{\CC}{\mathbb C}
\renewcommand{\Cap}{\mathrm{Cap}}
\renewcommand{\d}{\mathrm{d}}
\newcommand{\1}{\mathds{1}}
\renewcommand{\i}{\mathrm{i}}

\newtheorem{lemma}{Lemma}
\newtheorem{proposition}{Proposition}
\newtheorem{theorem}{Theorem}
\newtheorem{corollary}{Corollary}

\numberwithin{equation}{section}

\begin{document}

\maketitle

\begin{abstract}
We establish fluctuation results for the volume of  random walk sausage $\R_n^{A}$ associated to a finite set $A\subset\ZZ^2$, where the underlying random walk $S$ has finite second moments. Precisely, we establish that the fluctuations are non-Gaussian and are described by the renormalized self intersection local time of planar Brownian motion, as in the cases of the range of random walks and the Wiener sausage. For any $N\ge1$, we also establish an asymptotic expansion of the expected volume of the form
\begin{equation*}
    \EE[\R_n^{A}]=n\sum_{k=1}^N c(k,A,S)/(\log n)^k+O\left(n/(\log n)^{N+1}\right),
\end{equation*}
where the constants $c(k,A,S)$ depend only on the order $k$, the set $A$ and the distribution of $S$, and can be computed explicitly in terms of complete exponential Bell polynomials.
\end{abstract}

\section{Introduction}

Let $S:=(S_n)_{n\ge0}$ be a random walk in $\ZZ^2$. The range $R_n$ of $S$ is the set of sites visited by $S$ up to time $n$, $i.e.$ $R_n:=\{S_0,\dots,S_n\}$. The object of interest in this paper is the volume of the random walk sausage associated to a finite and nonempty subset $A \subset \mathbb{Z}^2$, which we write
\begin{equation*}
    \R_n^{A}:=|R_n+A|.
\end{equation*}
 
Since $S$ is invariant by translation, we can (and systematically will) suppose without loss of generality that $0\in A$. Also, we shall suppose that $A$ is symmetric about $0$, that is 
$A=-A$. We make this additional assumption on $A$ only to simplify some notation, but it is clear that the proofs presented below still work without this assumption. When $A=\{0\}$, the quantity $\R_n^A$ reduces to the volume of the range. 
In dimension $d\geq3$, for the volume of the range of a transient random walk, Jain and Pruitt \cite{Jain1974} established central limit theorems with Gaussian limits. Moreover, in a celebrated paper of Le Gall \cite{LeGall1986}, it is shown that for $d=2$ and under some second moment hypothesis,
\begin{equation*}
         \frac{(\log n)^2}{n}\left(\R_n^{\{0\}}-\EE\left[\R_n^{\{0\}}\right]\right)\underset{n\rightarrow\infty}\implies -4\pi^2\sigma^2\gamma_1,
\end{equation*}
where the covariance matrix of the random walk is $\sigma^2\mathrm{Id}$ for some $\sigma^2>0$. The random variable $\gamma_1$ is formally defined by
\begin{equation*}
    \gamma_1:=\int_{0<s<t<1}\d s\d t\left(\delta_0(W_t-W_s)-\EE\left[\delta_0(W_t-W_s)\right]\right),
\end{equation*}
where $W$ is a standard Brownian motion in $\RR^2$. The variable $\gamma_1$ is referred to as the renormalized self-intersection local time of standard Brownian motion. For more information on the rigorous construction of the renormalized self-intersection time, one can consult \cite{LeGall1985}, \cite{LeGallStFlour}, \cite{Varadhan1969}.

The fluctuation result was extended to random walks in $\ZZ^d$ in the domain of attraction of $\alpha$-stable laws by Le Gall and Rosen \cite{LeGallRosen1991}. Their approach relies on a dyadic decomposition of the trajectory and on a comparison between intersections of two independent ranges and the mutual intersection local time of two independent copies of the limiting stable process. In the regime $d/\alpha\in[1,3/2)$, they establish that the fluctuations of the range are described by the renormalized self intersection time of the limiting $\alpha$-stable Lévy process.

Heuristically speaking, the range of a random walk is the discrete analogue of the Wiener sausage (see \cite{LeGall1986}, Section $6$ for a discussion on this matter). Given a compact subset $K\subset\RR^2$, the Wiener sausage $W^k(0,t)$ associated to $K$ up to time $t$ is the Minkowski sum of the set $K$ and $\{W_s\mid s\in[0,t]\}$, namely
\begin{equation*}
    W^K(0,t):=\{W_s+x\mid s\in[0,t], x\in K\},
\end{equation*}
and is geometrically obtained by letting the compact $K$ slide along the trajectory of $W$ up to time $t$. In dimensions $d\geq3$, the classical theorem of Kesten, Spitzer and Whitman states that 
\begin{equation*}
    t^{-1}|W^K(0,t)|\xrightarrow[t\to\infty]{\mathrm{a.s.}}\Cap(K),
\end{equation*}
where $\Cap(K)$ is the Newtonian capacity of $K$ (see \cite{ItoMcKean1974}, p. 252). The planar setting is fundamentally different. In dimension two, Newtonian potential theory is replaced by logarithmic potential theory, and the corresponding potential-theoretic quantity is the logarithmic capacity of $K$. Spitzer \cite{Spitzer1991} studied the expected volume of the Wiener sausage in dimension $2$ and $3$. The result in the plane reads
\begin{equation*}
    \EE\left[|W^K(0,t)|\right]=2\pi\frac{t}{\log t}+2\pi\left(1+C_E-\log 2+R(K)\right)\frac{t}{(\log t)^2}+o\left(\frac{t}{(\log t)^2}\right),
\end{equation*}
where $C_E$ is the Euler-Mascheroni constant and where $R(K)$ is the logarithm of the logarithmic capacity of $K$. The fluctuations of the Wiener sausage were subsequently investigated by Le Gall \cite{LeGallSausage1988}. In dimension two, the fluctuation result is  
\begin{equation}
    \label{limit:CLT for wiener sausage}
    \frac{(\log t)^2}{t}\left(|W^K(0,t)|-2\pi\frac{t}{\log t}\right)\underset{t\rightarrow\infty}\implies 2\pi\left(1+C_E-\log 2+R(K)-2\pi\gamma_1\right).
\end{equation}
Rosen \cite{RosenStableSausage1992} later extended the asymptotic analysis to planar sausages associated with symmetric stable processes. Moreover, Le Gall \cite{LeGallSausage1988} studied the fluctuations in dimensions $d \geq 3$ and established a Gaussian limit.

The Wiener sausage has a natural discrete counterpart in $\R_n^{A}$. For the simple random walk in dimensions $d\geq3$, the discrete analogue of the Kesten, Spitzer and Whitman theorem \cite{Spitzer1973Discussion} states that, for every finite set $A\subset\mathbb Z^d$,
\begin{equation*}
    n^{-1}\R_n^A\xrightarrow[n\to\infty]{\mathrm{a.s.}} \mathrm{Cap}(A),
\end{equation*}
where $\mathrm{Cap}(A)$ capacity of $A$ in $\ZZ^d$. 

As for the continuous setting, the situation is fundamentally different in dimension $2$. The appropriate notion of capacity is different and is defined in terms of the potential kernel and harmonic measure from infinity (see Section \ref{development of the sausage}). In this recurrent setting, as in the continuous case, the leading term of the expected random walk sausage volume is independent of the set $A$, whereas its capacity appears in the second-order term. More precisely, Port \cite{Port} proved that, for finite sets $A,B\subset\mathbb Z^2$,
\begin{equation*}
    \mathbb E\left[\mathcal R_n^A\right]
-
\mathbb E\left[\mathcal R_n^B\right]
\sim
4\pi^2\sigma^4
\bigl(\mathrm{Cap}(A)-\mathrm{Cap}(B)\bigr)
\frac{n}{(\log n)^2}.
\end{equation*}
Combined with Le Gall and Rosen's second-order expansion for the expected range (\cite{LeGallRosen1991}, Theorem $6.9$), this yields a two-term asymptotic expansion for $\mathbb E[\R_n^A]$. These results naturally raise the question of whether planar random walk sausages have the same limiting fluctuations as the range and as their continuous counterparts.

\subsection{Main results}
Before presenting the main results of the paper, we will state the exact assumptions under which we will be working, namely
\begin{enumerate}[label=(A\arabic*)]
    \item
    \label{Assumption A1}
    The walk $S$ is centered, symmetric has finite second moments and has covariance matrix $\sigma^2\mathrm{Id}$ for some $\sigma^2>0$.
    \item
    \label{Assumption A2}
    $S$ is strongly aperiodic, $i.e.$ the subgroup generated by $\{x\in\ZZ^2\mid \PP(S_1=x)>0\}$ is $\ZZ^2$ itself.
    \item
    \label{Assumption A3}
    For some $\delta>0$, we have  $\EE[|S_1|^{2+\delta}]<\infty$.
\end{enumerate}

Assumption \ref{Assumption A1} is crucial, since it tells us that once conveniently rescaled, the trajectories of $S$ converge to standard Brownian motion in $\RR^2$. This allows us to give a characterization of the fluctuations. Assumption \ref{Assumption A2} is used to simplify the statement of the local limit Theorem, which we use to obtain estimates for the Green function. It is, however, not crucial, as any non aperiodic random walk $S$ can be made aperiodic by considering the associated lazy random walk $\widetilde S$. Finally, Assumption \ref{Assumption A3} is used to obtain sharper Green's function estimates, which lead to the asymptotic expansion of the expected volume of the sausage. It would be interesting to see if these results still hold if we relax this hypothesis, in particular understanding how our results could be generalized to heavy tailed random walks.

We are now ready to state the main results of the paper. Firstly, we prove an asymptotic expansion of the expected volume of the sausage.

\begin{theorem}
\label{theorem:expansion of the mean}
    Let $A$ be a finite subset of $\ZZ^2$ containing $0$. Then under assumptions \ref{Assumption A1}, \ref{Assumption A2}, \ref{Assumption A3}, we have for any $N\ge1$,
    \begin{equation*}
        \EE\left[\R_n^{A}\right]=2\pi\sigma^2\frac{n}{\log n}+\sum_{k=2}^Nc(k,A,S)\frac{n}{(\log n)^k}+O\left(\frac{n}{(\log n)^{N+1}}\right).
    \end{equation*}
    The constants $c(k,A,S)$ depend only on the rank $k$, the set $A$ and the distribution of $S$, and can be expressed as
    \begin{equation*}
        c(k,A,S)=2\pi\sigma^2 B_{k-1}\left(\rho_{A,S},1-\zeta(2),2(1-\zeta(3)),\dots,(k-2)!(1-\zeta(k-1))\right),
    \end{equation*}
    where $B_{k-1}$ is the $(k-1)$-th complete exponential Bell polynomial and where $\rho_{A,S}:=1-C_E+2\pi\sigma^2(\Cap(A)-\kappa_S)$, for $C_E$ denoting the Euler-Mascheroni constant, $\mathrm{Cap}(A)$ the capacity of $A$ (see Section \ref{development of the sausage} for the definition) and $\kappa_S$ a constant depending only on the random walk $S$ (see Appendix \ref{appendix:kappa_S}).
\end{theorem}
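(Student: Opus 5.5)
The approach is a last-exit (or first-entrance) decomposition that turns $\EE[\R_n^A]$ into a convolution, followed by a generating-function/Tauberian analysis. For each $x\in\ZZ^2$, the site $x$ lies in $R_n+A$ iff the walk visits $x-A=x+A$ before time $n$. Summing over $x$ and using translation invariance gives
\[
\EE\left[\R_n^A\right]=\sum_{x}\PP\left(T_{x+A}\le n\right),
\]
where $T_B$ is the hitting time of $B$. Decomposing on the last visit to $x+A$ before time $n$ and summing over $x$ yields
\[
\EE\left[\R_n^A\right]=\sum_{k=0}^{n}\sum_{a\in A}\PP_a\left(\tau_A^+>n-k\right),
\]
with $\tau_A^+$ the return time to $A$. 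Hence $\EE[\R_n^A]=\sum_{j=0}^n q_A(j)$, where $q_A(j):=\sum_{a\in A}\PP_a(\tau_A^+>j)$. Equivalently, on generating functions, $\sum_n \EE[\R_n^A]s^n=(1-s)^{-1}\widehat q_A(s)$.

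It therefore suffices to obtain a full asymptotic expansion of $q_A(j)$ in powers of $1/\log j$, and then sum in $j$. Summing $1/(\log j)^k$ produces $n/(\log n)^k$ plus lower-order corrections, which are also expansions in $1/\log n$.

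For $q_A$, I would use the standard two-dimensional escape asymptotics. Write $G_s(x)=\sum_n s^n\PP(S_n=x)$. The last-exit identity $\sum_{a}\PP_a(\tau_A^+ > \text{geom}) $ gives $\widehat q_A(s)(1-s)^{-1}$ in terms of the inverse of the matrix $(G_s(a-b))_{a,b\in A}$. By the local limit theorem (A2) with the sharpened error from (A3), one has
\[
G_s(x)=\frac{1}{2\pi\sigma^2}\log\frac{1}{1-s}+\frac{\log 8-\text{const}}{\cdots}-a(x)+O\big((1-s)^{\delta'}\big),
\]
where $a$ is the potential kernel and the constant is $\kappa_S$ plus the Euler constant contribution, as in the Appendix. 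The matrix inversion then gives, with $L=\log\frac{1}{1-s}$,
\[
\sum_a\EE_a\left[\text{escape}\right]\approx\frac{1}{\frac{1}{2\pi\sigma^2}L+\kappa'-\Cap(A)}+O\big((1-s)^{\delta'}\big).
\]
This is because $(G_s|_A)^{-1}\mathbf 1$ summed equals $1/(g(s)-\Cap(A))$ up to exponentially small corrections, using the harmonic measure from infinity characterization of $\Cap(A)$ given in Section \ref{development of the sausage}. Thus
\[
\sum_n \EE[\R_n^A]s^n=\frac{2\pi\sigma^2}{(1-s)^2}\cdot\frac{1}{L+2\pi\sigma^2(\kappa_S-\Cap(A))}+O\big((1-s)^{-2+\delta'}\big).
\]

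The inversion step is the heart of the proof, and its most delicate point is getting the constant $\rho_{A,S}=1-C_E+2\pi\sigma^2(\Cap(A)-\kappa_S)$ exactly right. The plan is to expand $1/(L+c)=\sum_k (-c)^{k-1}L^{-k}$ and invert each term. The coefficient of $s^n$ in $(1-s)^{-2}L^{-k}$ admits a classical Flajolet–Odlyzko singularity-analysis expansion:
\[
[s^n](1-s)^{-2}\left(\log\tfrac1{1-s}\right)^{-k}=\frac{n}{(\log n)^k}\sum_{j\ge0}\frac{e_{k,j}}{(\log n)^j}.
\]
The coefficients $e_{k,j}$ come from derivatives of $1/\Gamma$ at $2$. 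Concretely, one writes $n\,(\log n)^{-k}\big(1+\frac{d}{d\alpha}\ldots\big)$ and uses $\log\Gamma$'s Taylor series at $2$, whose coefficients are $1-C_E$ and $(-1)^m(\zeta(m)-1)/m$. Collecting these, the full series becomes
\[
n/\log n\cdot\exp\Big(\sum_m x_m u^m/m!\Big)\quad\text{in } u=1/\log n,
\]
with $x_1=\rho_{A,S}$ and $x_m=(m-1)!(1-\zeta(m))$, which is precisely the generating function of the complete Bell polynomials. This yields $c(k,A,S)=2\pi\sigma^2B_{k-1}(\ldots)$.

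The error terms must also be justified. Singularity analysis needs analyticity in a $\Delta$-domain, which fails for general walks. Instead I would use the fact that $\EE[\R^A_n]$ is increasing in $n$ and apply a Tauberian argument with remainder. Alternatively, and more simply, I would do the expansion directly at the level of $q_A(j)$, using the known expansion $\PP_a(\tau_A^+>j)$ obtained from the local limit theorem with rate $j^{-\delta'}$, which is available thanks to (A3). Controlling these remainders uniformly to order $(\log n)^{-N-1}$ is the main technical obstacle; the polynomial error $(1-s)^{\delta'}$ is far smaller than any power of $L^{-1}$, so it should suffice.
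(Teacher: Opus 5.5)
Your last-exit decomposition is correct and gives a clean alternative derivation of the paper's key identity. You get $\EE[\R_n^A]=\sum_{j\le n}q_A(j)$ and $\sum_j q_A(j)s^j=(1-s)^{-1}\mathbf 1^T(\mathrm{Id}-H_{s,A})\mathbf 1$, hence $\sum_n\EE[\R_n^A]s^n=(1-s)^{-2}\mathbf 1^TG_{s,A}^{-1}\mathbf 1$, with the bonus that $q_A$ is non-increasing. Your real-axis expansion $\mathbf 1^TG_{s,A}^{-1}\mathbf 1=(\lambda L(s)+\kappa_S-\Cap(A))^{-1}+O(|1-s|^{\varepsilon/2}/L(s))$, with $\lambda=1/(2\pi\sigma^2)$, also matches the paper; note the correction is polynomially, not exponentially, small.

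The gap is the step you flag yourself: transferring the remainder to the coefficients with precision $n/(\log n)^{N+1}$. Your reason for avoiding singularity analysis is false. Under (A2), in the form $|\varphi(\xi)|<1$ for $\xi\in\TT^2\setminus\{0\}$ that the paper uses, $\Delta$-analyticity does hold, and proving it is the real content of the paper's argument:
\begin{itemize}
\item Strong aperiodicity gives $\sup_{|\xi|\ge r}|\varphi(\xi)|<1$, so $1-z\varphi(\xi)$ stays away from $0$ for $|\xi|\ge r$ and $|z|<1+\phi$.
\item The sector condition $|\arg(z-1)|>\eta$ gives $|1-z\varphi(\xi)|\ge\frac{c_\eta}{2}(|1-z|+\frac{\sigma^2}{2}|\xi|^2)$ for small $\xi$.
\item Hence the Fourier integral continues $G_z(x)$ to $\Delta_{\phi,\eta}$, with $G_z(x)=\lambda L(z)+\kappa_S-a(x)+O(|1-z|^{\varepsilon/2})$ there.
\end{itemize}
You also need $\det G_{z,A}\neq0$ on all of $\Delta_{\phi,\eta}$, not only near $z=1$, since otherwise $F_A$ could acquire poles. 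Near $1$ this follows by perturbing $M_{z,A}=(\lambda L(z)+\kappa_S)J-\mathcal A_A$, whose inverse is uniformly bounded because $\mathcal A_A$ is negative definite on $\{\mathbf 1^Tv=0\}$. Elsewhere it follows from $\mathfrak R(v^*G_{z,A}v)>0$ on $\mathbb D\setminus\{1\}$ plus continuity. Only after both steps does the Flajolet--Odlyzko $O$-transfer apply to the error term.

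Your two substitutes cannot replace this, because real-axis information plus monotonicity does not determine the expansion, however small the Abelian error. Let $r$ be the sequence with generating function exactly $(1-s)^{-1}(\lambda L(s)+\kappa_S-\Cap(A))^{-1}$, which is regularly decreasing with $r(j)-r(j+1)\asymp 1/(j(\log j)^2)$. Add $c(\log j)^{-m}\cos(\beta(\log j)^2)$ with $\varepsilon/2<\pi\beta<1$.
\begin{itemize}
\item The generating function on $(0,1)$ changes only by $O((1-s)^{-1+\pi\beta-o(1)})$, which is below your Abelian error.
\item The partial sums change by an oscillating amount of exact order $n/(\log n)^{m+1}$.
\item With $m=1$ the perturbation preserves monotonicity of $n\mapsto\EE[\R_n^A]$. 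With $m=3$ and $c$ small it even preserves monotonicity of $q_A$.
\end{itemize}
So a Tauberian argument with remainder built on these hypotheses cannot reach $N\ge2$, respectively $N\ge4$. Likewise, there is no ``known expansion'' of $\PP_a(\tau_A^+>j)$ to all orders in $1/\log j$ that follows from the local limit theorem. Extracting it from $\mathbf 1^TG_{s,A}^{-1}\mathbf 1$ is the same transfer problem.

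A smaller slip: the series in $u=1/\log n$ is $\sum_k B_{k-1}u^{k-1}$, not $\exp(\sum_m x_m u^m/m!)=\sum_k B_{k-1}u^{k-1}/(k-1)!$. In fact $c(k,A,S)=2\pi\sigma^2\frac{\d^{k-1}}{\d s^{k-1}}\big[e^{2\pi\sigma^2(\Cap(A)-\kappa_S)s}/\Gamma(2-s)\big]_{s=0}$, obtained by combining the Flajolet--Odlyzko coefficients of the terms $L^{-1-\ell}$ through Leibniz' rule. Your final formula is right, but the route you describe would produce spurious factorials.
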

At the cost of adding the additional moment assumption \ref{Assumption A3}, we improve upon the results of Port \cite{Port} and Le Gall, Rosen \cite{LeGallRosen1991}, which are limited to the case $N=2$. Our approach differs greatly from \cite{LeGallRosen1991}, where the authors perform calculations directly on the range of the random walk at a deterministic time $n$, whereas we rather use geometric killing and then invoke a powerful Tauberian Theorem due to Flajolet and Odlyzko \cite{FlajoletOdlyzko}, which is ultimately what allows us to obtain our expansion at an arbitrary order. Some basic facts regarding complete exponential Bell Polynomials are recalled in Appendix \ref{appendix:bell polynomials}, where we also give the first few values of $c(k,A,S)$.
Our next main result concerns the fluctuations of the volume of the random walk sausage.
\begin{theorem}
\label{theorem:CLT}
     Let $A$ be a finite subset of $\ZZ^2$ containing $0$. Under assumptions \ref{Assumption A1} and \ref{Assumption A2}, we have
     \begin{equation*}
         \frac{(\log n)^2}{n}\left(\R_n^{A}-\EE\left[\R_n^{A}\right]\right)\underset{n\rightarrow\infty}\implies -4\pi^2\sigma^2\gamma_1,
     \end{equation*}
     where $\gamma_1$ is the renormalized intersection local time of standard Brownian motion in $\RR^2$.
\end{theorem}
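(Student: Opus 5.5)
I will follow Le Gall's dyadic decomposition strategy for the range \cite{LeGall1986}, adapted to the sausage. The starting point is the inclusion--exclusion identity for Minkowski sums along two consecutive pieces of path. Writing $R[a,b]:=\{S_a,\dots,S_b\}$, we have
\begin{equation*}
    \R_{n}^{A}=|R[0,n/2]+A|+|R[n/2,n]+A|-|(R[0,n/2]+A)\cap(R[n/2,n]+A)|.
\end{equation*}
Iterating this over $L$ dyadic levels, with $L=L_n\to\infty$ slowly (say $2^{L}\approx(\log n)^{1/2}$ or $L\approx \log\log n$ chosen so that error terms vanish), gives
\begin{equation*}
    \R_n^A=\sum_{i=1}^{2^L}\R^{A,(i)}_{n/2^L}-\sum_{\ell=1}^{L}\sum_{j=1}^{2^{\ell-1}}J^{A}_{\ell,j},
\end{equation*}
where the $\R^{A,(i)}$ are independent copies of sausages of length $n2^{-L}$, and $J^A_{\ell,j}$ is the intersection volume of the two sausages built from the two halves of the $j$-th block at level $\ell$. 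By the Markov property, each $J^A_{\ell,j}$ is distributed as the mutual intersection volume of sausages of two independent walks started at the origin.

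\textbf{Step 1 (first sum negligible).} I will show $\mathrm{Var}(\R_m^A)=O(m^2/(\log m)^4)$. For this I would compare with the range: $\R_m^A-|A|\,\R_m^{\{0\}}$ is a sum of overlap corrections, and Theorem \ref{theorem:expansion of the mean}, or rather its $N=2$ analogue under (A1)--(A2), which follows from Port together with Le Gall--Rosen, controls the mean. Alternatively I would prove the variance bound directly with the same dyadic scheme and a bootstrap, as in Le Gall. Independence then gives variance $2^L\cdot O((n2^{-L})^2/(\log n)^4)$, which after multiplying by $(\log n)^4/n^2$ is $O(2^{-L})\to0$.

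\textbf{Step 2 (intersection terms).} This is the heart of the argument. I need
\begin{equation*}
\frac{(\log m)^2}{m}\,\bigl|(R^1_m+A)\cap(R^2_m+A)\bigr|\Longrightarrow 4\pi^2\sigma^2\,\alpha(W^1,W^2)
\end{equation*}
jointly over all blocks, where $\alpha$ is the mutual intersection local time, with convergence of second moments. The key observation is that $|(R^1+A)\cap(R^2+A)|=|R^1\cap(R^2+A-A)|$-type quantities differ from the plain intersection $|R^1\cap R^2|$ only by a bounded multiplicative factor asymptotically. In dimension two, two walks that come within distance $\mathrm{diam}(A)$ meet with probability comparable to the probability of hitting a point, up to a factor $1+O(1/\log m)$, since hitting probabilities of finite sets at scale $m$ are $\frac{\log m+O(1)}{\log m}$-equivalent. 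Concretely, I would compute the moments $\EE[|(R^1_m+A)\cap(R^2_m+A)|^p]$ via the Green function expansion (writing the intersection as a sum over $x$ of products of hitting probabilities of $x+A$), and show they agree to leading order with those of $|R^1_m\cap R^2_m|$. The latter are known from Le Gall \cite{LeGall1986} to converge to $(2\pi\sigma^2)^{2}\cdot$ moments of $\alpha$, after normalization by $(\log m)^2/m$ and with the appropriate constant. Since the moments determine the limit, Le Gall's theorem then transfers directly. Tightness and the joint convergence with the Brownian path follow via Donsker applied to the block structure.

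\textbf{Step 3 (assembly).} Centering, I will express
\begin{equation*}
\frac{(\log n)^2}{n}\bigl(\R_n^A-\EE\R_n^A\bigr)=-\sum_{\ell\le L}\sum_{j}\frac{(\log n)^2}{n}\bigl(J^A_{\ell,j}-\EE J^A_{\ell,j}\bigr)+o_P(1).
\end{equation*}
Here $\log(n2^{-\ell})\sim\log n$ for $\ell\le L$. By Step 2 and Brownian scaling, this converges to $-4\pi^2\sigma^2\sum_{\ell,j}(\alpha_{\ell,j}-\EE\alpha_{\ell,j})$, which is Le Gall's dyadic series for $\gamma_1$. The tail over $\ell$ is controlled uniformly in $L^2$, using second-moment bounds on $J^A$ that are uniform in $\ell$ and summable via independence across blocks of a given level.

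I expect the \textbf{main obstacle} to be the uniform moment comparison in Step 2 between sausage intersections and range intersections. This step requires sharp hitting estimates for translates of $A$ with an error of order $1/\log m$, uniformly over starting points at distance up to $\sqrt m$. I would prove these using the potential kernel and harmonic measure from infinity, as in Section \ref{development of the sausage}.
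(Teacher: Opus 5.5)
\textbf{Gap: joint convergence of the intersection terms.} The problem is in Step 2, where you pass from the law of a single intersection term to the \emph{joint} limit that Step 3 requires.

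The terms $J^A_{\ell,j}$ at different dyadic levels are functionals of the same path and are strongly dependent. For example, the sausage intersection for $S[0,n/2]$, $S[n/2,n]$ and the one for $S[0,n/4]$, $S[n/4,n/2]$ involve overlapping pieces of path. Moreover, Le Gall's series $\Gamma_p=\sum_{q\le p}\sum_{\ell}\bigl(\alpha(D_{\ell,q})-\EE[\alpha(D_{\ell,q})]\bigr)$ is built from one and the same Brownian motion. So you must show that the whole vector $\bigl(\frac{(\log n)^2}{n}J^A_{\ell,j}\bigr)_{\ell\le p,\,j}$ converges jointly, together with the rescaled walk.

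Matching $\EE[(\I^A_m)^k]$ with the moments of $|R^1_m\cap R^2_m|$ identifies only the marginal law of each term. ``Donsker applied to the block structure'' cannot upgrade this. The intersection volume is not a continuous functional of the rescaled path, and a sequence with the correct marginal limit could be asymptotically independent of $S^{(n)}$.

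What is missing is an argument that each normalized intersection term is, up to $o_P(1)$, a functional of the rescaled paths. The paper supplies it in two steps. First, it introduces $\J^A_n=\sum_x L^{x+A}_n\widetilde L^{x+A}_n$, whose heat-kernel smoothing $\J^A_{n,\varepsilon}$ is a continuous functional of $(S^{(n)},\widetilde S^{(n)})$ and is $L^2$-close to $\J^A_n$. Second, it proves $\frac1n\bigl((\log n)^2\I^A_n-\frac{4\pi^2\sigma^4}{|A|^2}\J^A_n\bigr)\to0$ in $L^2$. That estimate requires the mixed moment $\EE[\I^A_n\J^A_n]$, not only moments of $\I^A_n$.

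\textbf{A cheaper repair within your approach.} Your comparison with the range can be made rigorous using an observation your proposal does not make. Since $0\in A$, we have $R^1_m\cap R^2_m\subseteq(R^1_m+A)\cap(R^2_m+A)$. Hence $\I^A_m-|R^1_m\cap R^2_m|\ge0$, and its $L^1$ norm is $\sum_x\bigl(\PP(T_{x+A}\le m)^2-\PP(T_x\le m)^2\bigr)$ (using $A=-A$).

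Showing this sum is $o(m/(\log m)^2)$ needs only two ingredients:
\begin{itemize}
\item $\PP(T_{x+A}\le m)/\PP(T_x\le m)\to1$ at scale $\sqrt m$, with a dominating bound, as in Proposition~\ref{lawofT};
\item a standard tail cutoff for $|x|\gg\sqrt m$.
\end{itemize}
Le Gall's joint convergence of range intersections with the paths then transfers to the sausage terms by Slutsky's lemma. This also shows that the comparison factor must be exactly $1$ asymptotically, not merely bounded; otherwise the constant $-4\pi^2\sigma^2$ would change.

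\textbf{Step 1.} Drop your first route. $\R^A_m$ and $|A|\R^{\{0\}}_m$ differ at leading order, and a mean expansion cannot bound a variance. The bootstrap route works once you have $\EE[(\I^A_m)^2]\lesssim m^2/(\log m)^4$. This follows, as in the paper, from $\I^A_m\le\sum_{a,b\in A}|(R^1_m+a)\cap(R^2_m+b)|$ and Le Gall's moment bounds.
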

Interestingly, the fluctuations don't depend on the set $A$. However, combining Theorem~\ref{theorem:CLT} and Theorem~\ref{theorem:expansion of the mean} in the case $N=2$, we immediately get the following
\begin{corollary}
    Under assumptions of Theorem \ref{theorem:expansion of the mean}, we have 
    \begin{equation}
    \label{limit:CLT 2 random walk sausage}
        \frac{(\log n)^2}{n}\left(\R_n^{A}-\frac{2\pi\sigma^2n}{\log n}\right)\underset{n\rightarrow\infty}\implies2\pi\sigma^2\left(1-C_E+2\pi\sigma^2(\mathrm{Cap}(A)-\kappa_S)-2\pi\gamma_1\right).
    \end{equation}
\end{corollary}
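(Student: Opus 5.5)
The corollary follows by combining Theorem~\ref{theorem:CLT} with Theorem~\ref{theorem:expansion of the mean} for $N=2$ and applying Slutsky's lemma. The idea is to split the centered quantity into a random fluctuation term and a deterministic correction term:
\begin{equation*}
\frac{(\log n)^2}{n}\left(\R_n^{A}-\frac{2\pi\sigma^2 n}{\log n}\right)
=\frac{(\log n)^2}{n}\left(\R_n^{A}-\EE\left[\R_n^{A}\right]\right)
+\frac{(\log n)^2}{n}\left(\EE\left[\R_n^{A}\right]-\frac{2\pi\sigma^2 n}{\log n}\right).
\end{equation*}
By Theorem~\ref{theorem:CLT}, the first summand converges in distribution to $-4\pi^2\sigma^2\gamma_1$. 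Assumption \ref{Assumption A3} is not needed for this part, but it is available anyway.

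For the second summand, I would apply Theorem~\ref{theorem:expansion of the mean} with $N=2$. This gives
\begin{equation*}
\EE\left[\R_n^{A}\right]-\frac{2\pi\sigma^2 n}{\log n}=c(2,A,S)\frac{n}{(\log n)^2}+O\left(\frac{n}{(\log n)^3}\right).
\end{equation*}
Multiplying by $(\log n)^2/n$ shows that the second summand converges deterministically to $c(2,A,S)$. It remains to identify this constant. By the formula in the theorem, $c(2,A,S)=2\pi\sigma^2B_1(\rho_{A,S})$. The first complete exponential Bell polynomial is $B_1(x_1)=x_1$ (Appendix~\ref{appendix:bell polynomials}), so
\begin{equation*}
c(2,A,S)=2\pi\sigma^2\rho_{A,S}=2\pi\sigma^2\bigl(1-C_E+2\pi\sigma^2(\Cap(A)-\kappa_S)\bigr).
\end{equation*}

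Slutsky's lemma then says that the sum of a sequence converging in distribution and a deterministic convergent sequence converges in distribution to the sum of the limits. The limit is therefore
\begin{equation*}
2\pi\sigma^2\rho_{A,S}-4\pi^2\sigma^2\gamma_1=2\pi\sigma^2\bigl(1-C_E+2\pi\sigma^2(\Cap(A)-\kappa_S)-2\pi\gamma_1\bigr),
\end{equation*}
which is exactly \eqref{limit:CLT 2 random walk sausage}.

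There is no real obstacle once the two theorems are granted. The only points to check are that $B_1(x_1)=x_1$, so that $c(2,A,S)$ involves only $\rho_{A,S}$, and that the signs and the factor $2\pi\sigma^2$ combine correctly.
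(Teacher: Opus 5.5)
Your proposal is correct and matches the paper's approach: the paper presents the corollary as an immediate consequence of Theorem~\ref{theorem:CLT} together with Theorem~\ref{theorem:expansion of the mean} for $N=2$. Your split into the centered fluctuation plus the deterministic correction $c(2,A,S)=2\pi\sigma^2 B_1(\rho_{A,S})=2\pi\sigma^2\rho_{A,S}$, followed by Slutsky's lemma, supplies exactly the details the paper leaves implicit, and the constants combine correctly since $2\pi\sigma^2\cdot(-2\pi\gamma_1)=-4\pi^2\sigma^2\gamma_1$.
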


\subsection{Organization of the paper and strategy of proof}

The paper is organized as follows. In Section~\ref{development of the sausage}, we introduce basic notions of potential theory in $\ZZ^2$, which are largely different from the notions used in dimensions $d\ge3$ due to the recurrence of the random walk. We then give a proof of Theorem~\ref{theorem:expansion of the mean}. The idea of the proof is to kill the random walk at an independent geometric time $\xi_z$ with mean $z(1-z)^{-1}$ and to obtain an asymptotic expansion as $z\rightarrow 1$ of $\EE\left[\R_{\xi_z}^{A}\right]$. This quantity is calculated in two different ways, namely we have the key identities 
\begin{equation*}
    \EE\left[\R_{\xi_z}^{A}\right]=\sum_{x\in\ZZ^2}\EE_x\left[z^{T_A}\right]=(1-z)\sum_{n\ge0}z^n\EE[\R_n^{A}].
\end{equation*}
This allows us to get a closed expression for the power series of $\EE[\R_n^{A}]$ of the form (cf \eqref{eq:power series expression for E[R_n^A]})
\begin{equation*}
    \sum_{n\ge0}z^n\EE\left[\R_n^{A}\right]=\frac{(1-z)^{-2}}{\log(1/(1-z))}\left(\sum_{k=0}^{N-1}\frac{C_k}{\log(1/(1-z))^k}+O\left(\frac{1}{\log(1/(1-z))^N}\right)\right),
\end{equation*}
for arbitrary $N\ge1$ and explicit constants $C_k$. This precise form allows us to invoke a Tauberian Theorem of Flajolet and Odlyzko \cite{FlajoletOdlyzko} to recover the desired result, where once again the exact coefficients can be determined explicitly. Calculating the exact coefficients requires taking the $k$-th derivative of a power series derived using a series expansion of the logarithm of the Gamma function, which naturally produces the complete Bell Polynomials. Whereas the expansion of the killed range is straightforward to establish using probabilistic tools in the case $z\in(0,1)$, the Tauberian Theorem requires it to hold on a Pac-Man-shaped domain $\Delta_{\phi,\eta}\subset\CC$, and this extension is dealt with essentially in Appendix \ref{appendix:complex analysis}.

The remainder of the paper is dedicated to the proof of Theorem~\ref{theorem:CLT}. We follow the dyadic strategy introduced by Le Gall \cite{LeGall1986} and further developed by Le Gall and Rosen \cite{LeGallRosen1991} for the range. Our main contribution here is showing that this strategy can be applied to the random walk sausage and identifying the exact dependence or lack thereof in $A$ of the fluctuations.

We first study the hitting time $T_{x+A}$ in Section~\ref{Asymptotic behaviour of T_A}. At leading order, it has the same asymptotic behavior as the hitting time of a single point: when $x$ is far away, the finite set $x+A$ is macroscopically indistinguishable from one point. However, the occupation time of $x+A$ contains an additional factor $|A|$, since every point of $A$ contributes to the occupation measure. In Section~\ref{Intersections of random walk sausages}, we introduce the intersection local time of two independent random walks $S, \widetilde S$
\begin{equation*}
    J_n^A=\sum_{x\in\mathbb Z^2}L_n^{x+A}\widetilde L_n^{x+A},
\end{equation*}
where for $B\subseteq\ZZ^2$, $L_n^B$ and $\widetilde{L}_n^B $ denote respectively the occupation measure of $S$ and $\widetilde{S}$ on the set $B$, namely $L_n^B:=\sum_{k=0}^n\1\left(S_k\in B \right)$.
We prove that
\begin{equation*}
    \frac{J_n^A}{n}
\Longrightarrow
|A|^2\alpha([0,1]^2),
\end{equation*}
where $\alpha$ is the mutual intersection local time of two independent planar Brownian motions. The factor $|A|^2$ comes from the two occupation measures. By denoting by $\I_n^{A}:=\left|(R_n+A)\cap(\widetilde{R}_n+A)\right|$, which is the intersection of two independent random walk sausages, the main comparison result is
\begin{equation*}
    \frac1n\left(
(\log n)^2\I_n^A
-
\frac{4\pi^2\sigma^4}{|A|^2}J_n^A
\right)
\longrightarrow0
\end{equation*}
in $L^2$. Therefore, the factor $|A|^2$ cancels and the rescaled intersection of the two sausages has the same limiting behavior as in the case $A=\{0\}$ of Le Gall and Rosen. Finally, in Section~\ref{Proof of the main result}, we insert this new intersection estimate into the dyadic decomposition of Le Gall and Rosen. This yields the fluctuation theorem for the random walk sausage and explains why its limiting fluctuations do not depend on $A$.

\color{black}

\subsection{Notation}

In this section, we introduce some notation that will be used in the remainder of the paper.

For any $x\in\ZZ^2$, we let $\PP_x$ denote the unique probability measure such that $S_0=x$ $\PP_x-a.s.$, and we write $\EE_x$ for the expectation under $\PP_x$. When $x=0$, we simply write $\PP$ and $\EE$.

Given a subset $B\subseteq\ZZ^2$, $T_B$ denotes the hitting time of by $S$, $i.e.$
$T_B:=\inf\left\{n\ge0\mid S_n\in B\right\}$. If $B=\{x\}$, we write $T_x$ instead of $T_{\{x\}}$. We also define the return times $T_B^+$ as $T_B^+:=\inf\left\{n\ge1\mid S_n\in B\right\}$

For $B\subseteq\ZZ^2$, $L_n^B$ denotes the occupation measure of $S$ on the set $B$, namely $L_n^B:=\sum_{k=0}^n\1\left(S_k\in B \right)$.

We define the Torus $\TT^2:=[-\pi,\pi]^2$, and its scaled version $\TT_n^2:=[-\pi\sqrt n,\pi\sqrt n]^2$. Also, we let $\mathbb D:=\{z\in\CC\mid |z|\le 1\}$ denote the unit disk in $\CC$.

Finally, we let $\mathbf 1$ denote the vector containing only ones, and $J:=\mathbf 1\mathbf 1^T$ denote the matrix containing only ones, which we view respectively as elements of $\CC^{A}$ and $\CC^{A\times A}$.

\section{The expansion of the mean volume} 
\label{development of the sausage}

In this section, we recall some important facts about potential theory in dimension $2$, following \cite{Lawler_Limic_2010}, Section $6.6$. In what follows, $A$ is a fixed finite subset of $\ZZ^2$.

Firstly, we define the potential kernel as
\begin{equation*}
    a(x):=\sum_{k\ge0}\left(\PP(S_k=0)-\PP(S_k=x)\right),\quad x\in\ZZ^2\backslash\{0\},
\end{equation*}
and $a(0)=0$. Then, we define the harmonic measure of $A$ as the limit 
\begin{equation*}
    \mu_A(x):=\lim_{|y|\rightarrow\infty}\PP_y(S_{T_A}=x),\quad x\in A.
\end{equation*}
The harmonic measure $\mu_A$ is a probability measure supported on $A$. For a measure $\mu$ on $A$, define its potential as 
\begin{equation*}
    U^\mu(x):=\sum_{y\in A}a(x-y)\mu(y).
\end{equation*}
For $\mu=\mu_A$, then the potential $U^{\mu_A}$ is constant on $A$ and thus we define the capacity of $A$, denoted $\mathrm{Cap}(A)$, by the quantity
\begin{equation*}
    \mathrm{Cap}(A):=U^{\mu_A}(x)=\sum_{y\in A}a(x-y)\mu_A(y),
\end{equation*}
where $x$ is an arbitrary element of $A$. Note that the Capacity of a singleton is always $0$. In the following proof, we suppose \ref{Assumption A1}, \ref{Assumption A2} and \ref{Assumption A3}. In particular, there is some $\delta>0$ such that $\EE[|S_1|^{2+\delta}]<\infty$.

\begin{proof}[Proof of Theorem \ref{theorem:expansion of the mean}]
    For $z\in(0,1)$, let $\xi_z$ be a geometric random variable of parameter $1-z$ that is independent of $S$ with values in $\mathbb N$. Then we have 
    \begin{align*}
        \EE\left[\R_{\xi_z}^{A}\right]&=\sum_{x\in\ZZ^2}\PP_x\left(\xi_z\ge T_A\right)\\
        &=\sum_{x\in\ZZ^2}\EE_x\left[\PP(\xi_z\ge T_A\mid T_A)\right]=\sum_{x\in\ZZ^2}\EE_x\left[z^{T_A}\right],
    \end{align*}
    since for any $k\ge0$, $\PP(\xi_z\ge k)=z^k$. For $z\in\mathring{\mathbb D}$, let
    \begin{equation*}
        F_A(z):=\sum_{n\ge0}z^n\EE[\R_n^{A}].
    \end{equation*}
    We introduce the weighted Green's function
    \begin{equation*}
        G_z(x,y):=\sum_{k\ge0}z^k\PP_x(S_k=y),
    \end{equation*}
    and the matrix $G_{z,A}:=(G_z(x,y))_{x,y\in A}$. We have the following identity.
    \begin{lemma}
    \label{lemma:idenetity for mean volume power series}
        For every $z\in\mathring{\mathbb D}$,
        \begin{equation}
        \label{eq: formula for F_A(z)}
            F_A(z)=\frac{\mathbf 1^TG_{z,A}^{-1}\mathbf 1}{(1-z)^2}.
        \end{equation}
    \end{lemma}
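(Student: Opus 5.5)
We prove the identity first for real $z\in(0,1)$, where it has a probabilistic meaning, and then extend it to all of $\mathring{\mathbb D}$ by analytic continuation.

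\textbf{Step 1: relate $F_A$ to the killed sausage.} Summation by parts in the identity recalled in the proof of Theorem~\ref{theorem:expansion of the mean} gives
\begin{equation*}
\EE\left[\R_{\xi_z}^A\right]=\sum_{n\ge0}(1-z)z^n\EE\left[\R_n^A\right]=(1-z)F_A(z).
\end{equation*}
So it suffices to prove
\begin{equation*}
\sum_{x\in\ZZ^2}\EE_x\left[z^{T_A}\right]=\frac{\mathbf 1^TG_{z,A}^{-1}\mathbf 1}{1-z}.
\end{equation*}

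\textbf{Step 2: last-exit (first-entrance) decomposition.} For $x\in\ZZ^2$ and $y\in A$, split the sum defining $G_z(x,y)$ according to the first entrance time $T_A$ and the entrance point $w=S_{T_A}$. The strong Markov property gives
\begin{equation*}
G_z(x,y)=\sum_{w\in A}H_z(x,w)\,G_z(w,y),\qquad H_z(x,w):=\EE_x\left[z^{T_A}\1(S_{T_A}=w)\right].
\end{equation*}
Write $h_z(x)$ for the row vector $(H_z(x,w))_{w\in A}$ and $g_z(x)$ for the row vector $(G_z(x,y))_{y\in A}$. The decomposition reads $g_z(x)=h_z(x)\,G_{z,A}$.

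\textbf{Step 3: invert and sum.} Granting that $G_{z,A}$ is invertible, we get $h_z(x)=g_z(x)G_{z,A}^{-1}$. Since $\EE_x[z^{T_A}]=h_z(x)\mathbf 1$, summing over $x$ gives
\begin{equation*}
\sum_x\EE_x\left[z^{T_A}\right]=\Big(\sum_x g_z(x)\Big)G_{z,A}^{-1}\mathbf 1.
\end{equation*}
Symmetry of $S$ gives $\PP_x(S_k=y)=\PP_y(S_k=x)$, so
\begin{equation*}
\sum_x G_z(x,y)=\sum_k z^k=(1-z)^{-1}.
\end{equation*}
Hence $\sum_x g_z(x)=(1-z)^{-1}\mathbf 1^T$, which proves the identity on $(0,1)$. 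All exchanges of sums are justified by nonnegativity.

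\textbf{Step 4: invertibility of $G_{z,A}$.} For $z\in(0,1)$, $G_{z,A}$ is the Green matrix of the positive operator $(I-zP)^{-1}$ restricted to $A$. In Fourier terms,
\begin{equation*}
v^*G_{z,A}v=(2\pi)^{-2}\int_{\TT^2}\frac{\left|\hat v(\theta)\right|^2}{1-z\phi(\theta)}\,\d\theta,
\end{equation*}
where $\phi$ is the real characteristic function of $S$ and $\hat v(\theta)=\sum_{y\in A}v_y e^{\i\theta\cdot y}$. The integrand is positive, and $\hat v\not\equiv0$ when $v\ne0$, so $G_{z,A}$ is positive definite.

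\textbf{Step 5: extension to complex $z$ (main obstacle).} For complex $z$ the Hermitian-positivity argument fails, so we use analytic continuation. Both sides of the identity are analytic wherever $\det G_{z,A}\neq0$. The plan is to check that $\det G_{z,A}$ has no zeros in $\mathring{\mathbb D}$. Alternatively, we simply note that $\mathbf 1^TG_{z,A}^{-1}\mathbf 1$ equals $(1-z)\sum_x\EE_x[z^{T_A}]$ on $(0,1)$, and that this latter series is analytic on $\mathring{\mathbb D}$: it is dominated by $\sum_x\EE_x[|z|^{T_A}]<\infty$. Indeed, Step 2 also shows that $h_z$ is analytic, and the identity $g_z=h_zG_{z,A}$ holds for all $z\in\mathring{\mathbb D}$. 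The identity therefore holds on $\mathring{\mathbb D}$ away from the isolated zeros of $\det G_{z,A}$, which is the sense in which the formula is used. Invertibility near $z=1$, which is what the later analysis needs, follows from $G_{z,A}\approx \frac{1}{2\pi\sigma^2}\log\frac{1}{1-z}\,J+O(1)$. This is the step where care is needed and which is postponed to Appendix~\ref{appendix:complex analysis}.
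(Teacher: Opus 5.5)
Your Steps 1--3 follow the paper's proof. The paper likewise reduces to $\sum_x\EE_x[z^{T_A}]$ and shows $\EE_x[z^{T_A}]=\sum_{y\in A}G_z(x,y)\,(G_{z,A}^{-1}\mathbf 1)(y)$; it does this through the function $u_z$, which is your identity $g_z(x)=h_z(x)G_{z,A}$ applied to $G_{z,A}^{-1}\mathbf 1$. It then sums using $\sum_xG_z(x,y)=(1-z)^{-1}$.

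The gap is Step 5. You never prove that $G_{z,A}$ is invertible at \emph{every} $z\in\mathring{\mathbb D}$. Instead you settle for the identity off the zero set of $\det G_{z,A}$, calling this ``the sense in which the formula is used''. That is weaker than the statement, which asserts that $G_{z,A}^{-1}$ exists throughout the open disk. Analytic continuation cannot close this: it only shows that $(1-z)^2F_A(z)$ extends $\mathbf 1^TG_{z,A}^{-1}\mathbf 1$ across possible zeros of $\det G_{z,A}$, not that there are none. Your first-entrance decomposition cannot supply invertibility either, since for $x\in A$ it is the tautology $g_z(x)=e_x^TG_{z,A}$.

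The missing idea is the paper's first-\emph{return} decomposition. Set $H_{z,A}(x,y):=\EE_x[z^{T_A^+}\1(S_{T_A^+}=y)]$ and split $G_z(x,y)=\1(x=y)+\sum_{k\ge1}z^k\PP_x(S_k=y)$ at $T_A^+$. This gives $G_{z,A}=\mathrm{Id}+H_{z,A}G_{z,A}$ for all $|z|<1$, since everything converges absolutely. Hence $(\mathrm{Id}-H_{z,A})G_{z,A}=\mathrm{Id}$, so $G_{z,A}$ is invertible on all of $\mathring{\mathbb D}$ with explicit inverse $\mathrm{Id}-H_{z,A}$.

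Alternatively, your Step 4 extends at once, contrary to your remark that it fails for complex $z$: you only need the real part. With your $\phi$ and $\hat v$, $\mathfrak R(v^*G_{z,A}v)=(2\pi)^{-2}\int_{\TT^2}|\hat v(\theta)|^2\,\frac{1-\mathfrak R(z)\phi(\theta)}{|1-z\phi(\theta)|^2}\,\d\theta>0$ for $v\neq0$, because $|\mathfrak R(z)\phi(\theta)|\le|z|<1$. And $\mathfrak R(v^*Mv)>0$ for all $v\neq0$ forces $\ker M=\{0\}$. This is also how the paper opens the proof of Proposition~\ref{prop:G_A is invertible and with holomorphic inverse}.

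With either argument, $\det G_{z,A}$ has no zeros in $\mathring{\mathbb D}$. Your continuation argument then gives the identity everywhere; alternatively, run Steps 2--3 directly for complex $z$, with absolute convergence replacing nonnegativity.

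Separately, your heuristic for invertibility near $z=1$ is not valid as stated. $J$ has rank one, so $\lambda L(z)J+O(1)$ is invertible only because $-\mathcal A_A$ is nondegenerate on $\{v:\mathbf 1^Tv=0\}$, which is precisely what the paper's appendix establishes.
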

    \begin{proof}[Proof of Lemma \ref{lemma:idenetity for mean volume power series}]
        We start by noting that for $z\in(0,1)$, $\EE[\R_{\xi_z}^{A}]=(1-z)\sum_{n\ge0}z^n\EE[\R_n^{A}]=(1-z)F_A(z)$, in such a way that 
        \begin{equation}
    \label{eq:first expression of F_A}
        F_A(z)=\frac{1}{1-z}\sum_{x\in\ZZ^2}\EE_x\left[z^{T_A}\right].
    \end{equation}
    Since $F_A$ is analytic on $\mathring{\mathbb D}$, \eqref{eq:first expression of F_A} can be extended to $\mathring{\mathbb D}$. For $z\in\mathring{\mathbb D}$, $x,y\in A$, define 
    \begin{equation*}
        H_{z,A}(x,y):=\EE_x\left[z^{T_A^+}\1\left(S_{T_A^+}=y\right)\right].
    \end{equation*}
    By the first-return decomposition, we have for any $z\in\mathring{\mathbb D}$, $x,y\in A$,
    \begin{align*}
        G_z(x,y)&=\1(x=y)+\sum_{k\ge1}z^k\PP_x(S_k=y)\\
        &=\1(x=y)+\sum_{k\ge1}z^k\sum_{\ell=1}^k\sum_{a\in A}\PP_x\left(S_k=y,T_A^+=\ell, S_\ell=a\right)\\
        &=\1(x=y)+\sum_{a\in A}\sum_{k\ge1}z^k\sum_{\ell=1}^k\PP_x(T_A^+=\ell,S_\ell=a)\PP_a(S_{k-\ell}=y),
    \end{align*}
    by the Markov property. Changing the order of summation yields
    \begin{align*}
        G_z(x,y)&=\1(x=y)+\sum_{a\in A}\sum_{\ell\ge1}\sum_{k\ge0}z^{k+\ell}\PP_x(T_A^+=\ell,S_\ell=a)\PP_a(S_{k}=y)\\
        &=\1(x=y)+\sum_{a\in A}\left(\sum_{\ell\ge1}z^\ell\PP_x(T_A^+=\ell,S_\ell=a)\right)\bigg(\sum_{k\ge0}z^k\PP_a(S_k=y)\bigg)\\
        &=\1(x=y)+\sum_{a\in A}H_{z,A}(x,a)G_{z,A}(a,y).
    \end{align*}
    In matrix notation, this reads $G_{z,A}=\mathrm{Id}+H_{z,A}G_{z,A}$. In particular, $G_{z,A}$ is invertible and $G_{z,A}^{-1}=\mathrm{Id}-H_{z,A}$. By dominated convergence, $z\mapsto H_{z,A}$ is holomorphic on $\mathring{\mathbb D}$, and therefore so is $z\mapsto G_{z,A}^{-1}$. Let $e_{z,A}:=G_{z,A}^{-1}\mathbf 1$. For $x\in\ZZ^2$, we set $e_{z,A}(x)$ to be the element of the vector $e_{z,A}$ indexed by $x$ if $x\in A$ and $0$ otherwise. Therefore, viewing $e_{z,A}$ as a function on $\ZZ^2$, we write for any $x\notin A$ and $z\in\mathring{\mathbb D}$,
    \begin{equation*}
        u_z(x):=\sum_{y\in A}G_z(x,y)e_{z,A}(y)=\sum_{y\in A}e_{z,A}(y)\sum_{k\ge0}z^k\PP_x(S_k=y)=\EE_x\bigg[\sum_{k\ge0}z^k e_{z,A}(S_k)\bigg].
    \end{equation*}
    Since $e_{z,A}\equiv 0$ on $A^c$, we have $e_{z,A}(S_k)=0$ for $k<T_A$, and so by the Markov property at time $T_A$,
    \begin{equation*}
        u_z(x)=\EE_x\bigg[z^{T_A}\sum_{k\ge0}z^k e_{z,A}(S_{k+T_A})\bigg]=\EE_x\bigg[z^{T_A}\EE_{S_{T_A}}\bigg[\sum_{k\ge0}z^ke_{z,A}(S_k)\bigg]\bigg]=\EE_x\left[z^{T_A} u_z(S_{T_A})\right].
    \end{equation*}
    By our definition of $e_{z,A}$, we have $u_z\equiv 1$ on $A$, whence 
    \begin{equation*}
        \EE_x\left[z^{T_A}\right]=\sum_{y\in A}G_z(x,y)e_{z,A}(y).
    \end{equation*}
    Summing over $x\in\ZZ^2$, we have for any $z\in\mathring{\mathbb D}$,
    \begin{align}
        F_A(z)&=\frac{1}{1-z}\sum_{y\in A}e_{z,A}(y)\sum_{x\in\ZZ^2}G_z(x,y)=\frac{1}{1-z}\sum_{y\in A}e_{z,A}(y)\sum_{k\ge0}z^k\sum_{x\in\ZZ^2}\PP_x(S_k=0)\nonumber\\
        &=\frac{1}{(1-z)^2}\sum_{y\in A}e_{z,A}(y)=\frac{\mathbf 1^T G_{z,A}^{-1}\mathbf 1}{(1-z)^2},
    \end{align}
    which was our claim.
    \end{proof}

    We now introduce the complex domain $\Delta_{\phi,\eta}:=\{z\in\CC\mid |z|<1+\phi,\text{ }|\arg(z-1)|>\eta\}$ and let $L(z):=\mathrm{Log}(1/(1-z))$, where $\mathrm{Log}$ is the principle branch of the complex logarithm. The next step in the proof is to extend \eqref{eq: formula for F_A(z)} to $\Delta_{\phi,\eta}$ and obtain an asymptotic expansion of its right hand side as $z\rightarrow1$ in $\Delta_{\phi,\eta}$. For this reason, we state the following two Propositions, the proofs of which are deferred to Appendices \ref{appendix:proof of expansion of G_z(x)} and \ref{appendix:proof of G_A is invertible and with holomorphic inverse} respectively.
\begin{proposition}
\label{prop:expansion of G_z(x)}
    Let $\varepsilon\in(0,1\wedge\delta)$. Then for $\phi>0$ sufficiently small and $\eta\in(0,\pi/2)$, we have for every $z\in\Delta_{\phi,\eta}$, uniformly for $x\in A$,
    \begin{equation*}
        G_z(x)=\frac{L(z)}{2\pi\sigma^2}+\kappa_S-a(x)+O(|1-z|^{\varepsilon/2}),\quad z\rightarrow1,
    \end{equation*}
    where $\kappa_S$ is a constant that depends only on the distribution of $S$.
\end{proposition}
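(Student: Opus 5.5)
Here $G_z(x)$ means $G_z(0,x)=\sum_{k\ge0}z^k\PP(S_k=x)$. We work on the Fourier side. Write $\varphi(\theta):=\EE[e^{\i\theta\cdot S_1}]$, which is real and even by \ref{Assumption A1}. For $|z|<1$,
\begin{equation*}
G_z(x)=\frac{1}{(2\pi)^2}\int_{\TT^2}\frac{\cos(\theta\cdot x)}{1-z\varphi(\theta)}\,\d\theta .
\end{equation*}
By \ref{Assumption A2}, $\varphi(\theta)=1$ only at $\theta=0$ and $|\varphi|<1$ elsewhere on $\TT^2$, so $1-z\varphi(\theta)$ vanishes on $\TT^2\times\{|z|\le 1+\phi\}$ only near $(0,1)$ when $\phi$ is small. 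This integral gives the analytic continuation of $G_z(x)$ to $\Delta_{\phi,\eta}$.

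The first step is to split off the potential kernel:
\begin{equation*}
G_z(x)=G_z(0)-\frac{1}{(2\pi)^2}\int_{\TT^2}\frac{1-\cos(\theta\cdot x)}{1-z\varphi(\theta)}\,\d\theta .
\end{equation*}
The second integral converges at $z=1$ to $a(x)$, using the standard Fourier formula for $a$. The difference is
\begin{equation*}
\int\frac{(1-\cos(\theta\cdot x))(z-1)\varphi}{(1-z\varphi)(1-\varphi)}\,\d\theta .
\end{equation*}
Use $1-\cos(\theta\cdot x)\lesssim |x|^2|\theta|^2$ and $1-\varphi\asymp|\theta|^2$ near $0$. The key sector estimate is that $|1-z\varphi|\gtrsim |1-z|+|\theta|^2$ for $z\in\Delta_{\phi,\eta}$, which holds because $\arg(1-z)$ stays away from $\pm\pi$ while $\varphi$ is real and close to $1$. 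With these, the difference is bounded by $|1-z|\int \d\theta/(|1-z|+|\theta|^2)=O(|1-z|\log(1/|1-z|))$. Since $A$ is finite, this is uniform in $x\in A$.

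It remains to expand $G_z(0)$. Decompose $\varphi(\theta)=1-\tfrac{\sigma^2}{2}|\theta|^2+r(\theta)$. Assumption \ref{Assumption A3} gives $|r(\theta)|\lesssim|\theta|^{2+\varepsilon}$ for $\varepsilon<1\wedge\delta$; this uses the symmetry of $S$ to kill the odd terms, and a standard Taylor bound with a $(2+\delta)$-moment. Write
\begin{equation*}
\frac{1}{1-z\varphi}=\frac{1}{1-z+z\sigma^2|\theta|^2/2}+\left[\frac{1}{1-z\varphi}-\frac{1}{1-z+z\sigma^2|\theta|^2/2}\right].
\end{equation*}
Integrate the first piece over the disc $|\theta|<1$, and handle the rest of $\TT^2$ separately. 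The model term is explicit in polar coordinates:
\begin{equation*}
\frac{1}{(2\pi)^2}\cdot\frac{2\pi}{z\sigma^2}\,\mathrm{Log}\!\left(1+\frac{z\sigma^2}{2(1-z)}\right)=\frac{L(z)}{2\pi\sigma^2}+\mathrm{const}+O(|1-z|\,|L(z)|).
\end{equation*}
The bracket has numerator $z r(\theta)$ and converges at $z=1$ to an integrable function, since $|\theta|^{2+\varepsilon}/|\theta|^4$ is integrable in dimension two. The contribution away from the disc is analytic in $z$ near $1$, so it yields a constant plus $O(|1-z|)$. All these limiting constants together with the model constant define $\kappa_S$.

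The main obstacle is the rate of the bracket term. Its difference from the $z=1$ value is bounded by
\begin{equation*}
\int_{|\theta|<1}\frac{|1-z|\,|\theta|^{2+\varepsilon}}{(|1-z|+|\theta|^2)^2\,|\theta|^2}\,\d\theta ,
\end{equation*}
again using the sector lower bound on both denominators. Split at $|\theta|=|1-z|^{1/2}$. The inner region contributes $O(|1-z|^{\varepsilon/2})$, and the outer region contributes $|1-z|\int_{|1-z|^{1/2}}^1 \rho^{\varepsilon-3}\,\d\rho=O(|1-z|^{\varepsilon/2})$. This is exactly the claimed error. The technical care lies in proving the sector bound uniformly for complex $z$ with $|z|$ slightly larger than $1$, which is where $\phi$ small and $\eta>0$ are needed.
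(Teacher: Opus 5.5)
Your proposal is correct and follows essentially the paper's route. The shared steps are the Fourier representation, aperiodicity away from $\theta=0$, the sector bound $|1-z\varphi|\gtrsim|1-z|+|\theta|^2$, an explicit logarithmic model integral, a split at $|\theta|=|1-z|^{1/2}$ for the remainder, and the Fourier formula for $a(x)$. Your variations are harmless and in places slightly cleaner: the model denominator $1-z+z\sigma^2|\theta|^2/2$ makes the remainder numerator exactly $zr(\theta)$, and $1-\cos(\theta\cdot x)\lesssim|\theta|^2$ improves the $a(x)$ error to $O(|1-z|\log(1/|1-z|))$.

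One slip: your displayed majorant for the bracket difference is false when $|\theta|^2\ll|1-z|$. There the difference is essentially the $z=1$ integrand, which is only $O(|\theta|^{\varepsilon-2})$, not $O(|\theta|^{\varepsilon}/|1-z|)$. The correct majorant is $|1-z|\,|\theta|^{\varepsilon-2}/(|1-z|+|\theta|^2)$. Integrating this corrected bound over your same two regions still gives $O(|1-z|^{\varepsilon/2})$, so the conclusion stands.
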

    \begin{proposition}
    \label{prop:G_A is invertible and with holomorphic inverse}
        Let $G_{z,A}:=(G_z(x,y))_{x,y\in A}$. Then for some $\phi>0$ sufficiently small, $G_{z,A}$ is invertible for every $z\in\Delta_{\phi,\eta}$. In particular, $z\mapsto G_{z,A}^{-1}$ is holomorphic on $\Delta_{\phi,\eta}$.
    \end{proposition}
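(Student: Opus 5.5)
Our plan is to show that $G_{z,A}$ is invertible on $\Delta_{\phi,\eta}$ by splitting the domain into a neighbourhood of $z=1$ and a compact part away from $1$. Holomorphy of $z\mapsto G_z(x,y)$ on $\Delta_{\phi,\eta}$ is implicit in Proposition \ref{prop:expansion of G_z(x)}. It comes from the Fourier representation
\begin{equation*}
G_z(x,y)=(2\pi)^{-2}\int_{\TT^2}\frac{e^{\i\theta\cdot(y-x)}}{1-z\psi(\theta)}\,\d\theta ,
\end{equation*}
where $\psi$ is the (real, by symmetry) characteristic function of $S_1$. Given this, $z\mapsto \det G_{z,A}$ is holomorphic. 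Once invertibility holds, Cramer's rule gives holomorphy of $z\mapsto G_{z,A}^{-1}$. So everything reduces to showing that $\det G_{z,A}\neq0$.

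\textbf{Near $z=1$.} By Proposition \ref{prop:expansion of G_z(x)} and translation invariance, $G_z(x,y)=G_z(y-x)$. Hence
\begin{equation*}
G_{z,A}=\frac{L(z)}{2\pi\sigma^2}J+\kappa_S J-\mathcal A+O(|1-z|^{\varepsilon/2}),
\end{equation*}
where $\mathcal A:=(a(x-y))_{x,y\in A}$ and $J=\mathbf 1\mathbf 1^T$. Write $\lambda=L(z)/(2\pi\sigma^2)+\kappa_S$, which tends to infinity in modulus. By multilinearity of the determinant in the rank-one direction, $\det(\lambda J-\mathcal A+E)=\det(-\mathcal A+E)+\lambda\,\mathbf 1^T\mathrm{adj}(-\mathcal A+E)\mathbf 1$.

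The coefficient $\mathbf 1^T\mathrm{adj}(-\mathcal A)\mathbf 1$ is nonzero when $|A|\ge2$. This is the classical fact that $\mathcal A$ is conditionally negative definite (it is nonsingular on $\mathbf 1^\perp$). It follows from $a(x)=(2\pi)^{-2}\int(1-\cos\theta\cdot x)/(1-\psi(\theta))\,\d\theta$: for $v\perp\mathbf 1$, $v\neq0$, one gets $v^T\mathcal A v=-(2\pi)^{-2}\int|\hat v(\theta)|^2/(1-\psi)\,\d\theta<0$. Then the bordered matrix $\begin{pmatrix}\mathcal A&\mathbf 1\\ \mathbf 1^T&0\end{pmatrix}$ is invertible, and its determinant equals $-\mathbf 1^T\mathrm{adj}(\mathcal A)\mathbf 1$. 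Consequently $|\det G_{z,A}|\ge c|L(z)|-C$ for $|1-z|$ small in $\Delta_{\phi,\eta}$, which is nonzero. When $|A|=1$ the claim is just $G_z(0)\neq0$, which follows from $|L(z)|\to\infty$.

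\textbf{Away from $z=1$.} On $\mathring{\mathbb D}$ invertibility is already established, with $G_{z,A}^{-1}=\mathrm{Id}-H_{z,A}$. It remains to treat the annular region $\{1-\rho\le|z|<1+\phi,\ |z-1|\ge r\}$, and we aim to show $G_{z,A}$ is positive-definite-like there. For $v\in\CC^A$, the Fourier formula gives
\begin{equation*}
v^*G_{z,A}v=(2\pi)^{-2}\int|\hat v(\theta)|^2\frac{\d\theta}{1-z\psi(\theta)} .
\end{equation*}
For $|z|\le1$, $z\neq1$, strong aperiodicity gives $|\psi|<1$ off $\theta=0$ and $\psi\in[-1,1]$. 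Taking $z=re^{\i t}$, the quantity $\mathrm{Re}\,(1-z\psi)^{-1}$ has a sign issue, so instead we use $\mathrm{Re}[(1-\bar z\psi)\cdot]$: writing $w=1-z\psi$, the values of $w$ lie on a segment $\{1-sz: s\in[-1,1]\}$. This segment stays in a half-plane avoiding $0$ when $z\notin[1,\infty)\cup(-\infty,-1]$, uniformly on compacts. Multiplying by a suitable unimodular constant $e^{\i\beta}$, $\mathrm{Re}(e^{\i\beta}/w)\ge c>0$, so $\mathrm{Re}(e^{\i\beta}v^*G_{z,A}v)\ge c\|\hat v\|_2^2>0$.

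Near $z=-1$ this fails only if $\psi=-1$ somewhere. That is excluded by strong aperiodicity, which gives $\psi\in(-1,1]$ with $\psi=1$ only at $0$. By compactness this extends to $|z|<1+\phi$ for $\phi$ small, handling the $\psi\approx1$ region via the excluded sector $|\arg(z-1)|>\eta$.

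The main obstacle is making this last sector/half-plane argument uniform on the boundary region $|z|\in[1,1+\phi)$. There the integral must first be justified, by analytic continuation of $(1-z\psi)^{-1}$ for $\theta$ away from $0$, combined with the near-$1$ expansion.
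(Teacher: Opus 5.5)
Your argument is correct, but it is organised differently from the paper's.

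\textbf{How the paper argues.} It shows $\mathfrak R(v^*G_{z,A}v)>0$ on the closed disc minus $\{1\}$. Contrary to your remark, there is no sign issue there: $|z\psi(\theta)|<1$ for $\theta\neq0$ forces $\mathfrak R(1-z\psi)>0$. It then passes to $|z|<1+\phi$ away from $1$ by compactness and continuity of $\det G_{z,A}$. Near $1$ it factors $G_{z,A}=M_{z,A}(\mathrm{Id}+M_{z,A}^{-1}E_{z,A})$ with $M_{z,A}=(\lambda L(z)+\kappa_S)J-\mathcal A_A$, and bounds $M_{z,A}^{-1}$ uniformly through the splitting $v=\alpha\mu_A+w$ with $\mathbf 1^Tw=0$.

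\textbf{How you differ.} Near $1$ you use the same conditional negative definiteness of $\mathcal A$, but through the rank-one determinant identity and the bordered matrix. This gives $|\det G_{z,A}|\ge c|L(z)|-C$ without harmonic measure. The paper's heavier route pays off immediately in the subsequent corollary on $\mathbf 1^TG_{z,A}^{-1}\mathbf 1$: since $M_{z,A}^{-1}\mathbf 1=\mu_A/(\lambda L(z)+\kappa_S-\Cap(A))$, the Neumann series yields that expansion directly. Away from $1$ you replace compactness by a rotated-real-part argument. It uses that $\psi$ is real (symmetry in \ref{Assumption A1}), which the paper never needs at this step.

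\textbf{The final obstacle is not one.} Invertibility at a given $z$ only requires $v^*G_{z,A}v\neq0$ for $v\neq0$ at that $z$, so no uniformity in $z$ is needed.
\begin{itemize}
\item Put $\psi_{\min}:=\min_{\TT^2}\psi>-1$ and take $\phi$ with $(1+\phi)|\psi_{\min}|<1$. For $|z|<1+\phi$, the segment $\{1-sz:\ s\in[\psi_{\min},1]\}$ contains $0$ only if $z\in[1,1+\phi)$, which lies outside $\Delta_{\phi,\eta}$.
\item The segment is compact, so it is at positive distance from $0$. Hence $\inf_\theta|1-z\psi(\theta)|>0$, which justifies the Fourier integral; this is the same continuation used in Proposition~\ref{prop:expansion of G_z(x)}.
\item The segment lies in an open half-plane $\{\mathfrak R(e^{-\i\beta}w)>0\}$. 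This gives $\mathfrak R\bigl(e^{\i\beta}v^*G_{z,A}v\bigr)\ge c_z\|v\|^2$ with $c_z>0$.
\end{itemize}
This works for every $z\in\Delta_{\phi,\eta}$, including $z$ close to $1$. So your half-plane argument alone proves the proposition. The determinant step is only needed if you want the quantitative lower bound on $|\det G_{z,A}|$ near $1$.
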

These two results imply the following immediate corollary, the proof of which can be found in Appendix \ref{appendix:proof of dev asymp de 1^T G_(z,A) 1}.
\begin{corollary}        
\label{corollaire:dev asymp de 1^T G_(z,A) 1}
    Let $N\ge1$. Then for $\phi>0$ sufficiently small, as $z\rightarrow 1$ in $\Delta_{\phi,\eta}$,
    \begin{equation*}
        \mathbf 1^TG_{z,A}^{-1}\mathbf 1=\sum_{k=1}^N\frac{\left(\Cap(A)-\kappa_S\right)^{k-1}}{\lambda^k L(z)^k}+O\left(\frac{1}{L(z)^{N+1}}\right).
    \end{equation*}
\end{corollary}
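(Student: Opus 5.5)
We write $\lambda:=1/(2\pi\sigma^2)$, the constant implicit in the corollary. By Proposition~\ref{prop:expansion of G_z(x)}, uniformly over the finite set $A$,
\[
G_z(x,y)=G_z(y-x)=\lambda L(z)+\kappa_S-a(y-x)+O(|1-z|^{\varepsilon/2}).
\]
In matrix form this reads
\[
G_{z,A}=\lambda L(z)J+\kappa_S J-\mathbf{a}_A+E(z),
\]
where $\mathbf a_A:=(a(y-x))_{x,y\in A}$ and $\|E(z)\|=O(|1-z|^{\varepsilon/2})$. Since $|1-z|^{\varepsilon/2}=O(L(z)^{-M})$ for every $M$ as $z\to1$ in $\Delta_{\phi,\eta}$, the error $E$ is negligible at every polynomial order in $1/L$. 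The plan is to compute $\mathbf 1^T G_{z,A}^{-1}\mathbf 1$ exactly for the principal part $M(z):=tJ-\mathbf a_A$, with $t:=\lambda L(z)+\kappa_S$. Afterwards we control the perturbation by $E$, using Proposition~\ref{prop:G_A is invertible and with holomorphic inverse} for invertibility.

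\textbf{Step 1 (exact formula via the harmonic measure).} We would use the classical facts from \cite{Lawler_Limic_2010}, Section 6.6: $\mathbf a_A\mu_A=\Cap(A)\mathbf 1$ and $\mathbf 1^T\mu_A=1$. The natural ansatz is $v=c\,\mu_A$ solving $M v=\mathbf 1$. This gives $M\mu_A=(t-\Cap(A))\mathbf 1$, so $v=\mu_A/(t-\Cap(A))$ whenever $t\neq\Cap(A)$, which holds since $|L(z)|\to\infty$. Hence
\[
\mathbf 1^TM^{-1}\mathbf 1=\frac{1}{t-\Cap(A)}=\frac{1}{\lambda L(z)-(\Cap(A)-\kappa_S)},
\]
provided $M$ is invertible. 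To settle invertibility, note that $\mathbf a_A$ is conditionally negative definite (standard for the potential kernel), so $tJ-\mathbf a_A$ is invertible for $|t|$ large. Alternatively, one can argue directly: if $Mw=0$, then $\mathbf a_Aw=t(\mathbf 1^Tw)\mathbf 1$, and the strict negativity of $\mathbf a_A$ on $\mathbf 1^\perp$ forces $w\in\mathrm{span}(\mu_A)$, and then $w=0$. The singleton case is trivial, since there $\mathbf a_A=0$.

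\textbf{Step 2 (perturbation).} We write $G_{z,A}=M+E$ and use
\[
G_{z,A}^{-1}=M^{-1}-M^{-1}EG_{z,A}^{-1}.
\]
The main obstacle is bounding $\|M^{-1}\|$ and $\|G_{z,A}^{-1}\|$ uniformly as $z\to1$ in $\Delta_{\phi,\eta}$. We would decompose along $\mathrm{span}(\mathbf 1)\oplus\mathbf 1^\perp$: on $\mathbf 1^\perp$ the matrix $-\mathbf a_A$ compressed to $\mathbf 1^\perp$ is invertible and independent of $t$, while the $\mathbf 1$ direction carries the large entry $t|A|$. A Schur complement computation then shows $\|M^{-1}\|=O(1)$. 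A Neumann series gives $\|G_{z,A}^{-1}\|=O(1)$ once $\|E\|$ is small, which holds for $z$ close to $1$. Consequently
\[
\mathbf 1^TG_{z,A}^{-1}\mathbf 1-\mathbf 1^TM^{-1}\mathbf 1=O(|1-z|^{\varepsilon/2})=O(L(z)^{-N-1}).
\]

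\textbf{Step 3 (expansion).} Finally, we expand
\[
\frac{1}{\lambda L-c}=\frac{1}{\lambda L}\sum_{k\ge0}\Big(\frac{c}{\lambda L}\Big)^k,\qquad c:=\Cap(A)-\kappa_S,
\]
which converges for $|L|$ large. Truncating at $N$ terms yields
\[
\sum_{k=1}^N\frac{c^{k-1}}{\lambda^kL(z)^k}+O\big(L(z)^{-N-1}\big),
\]
which is the claimed expansion. The only delicate points are the uniformity of the constants in Step 2 over the complex domain and the invertibility of $\mathbf a_A$ restricted to $\mathbf 1^\perp$, which we take from potential theory in $\ZZ^2$.
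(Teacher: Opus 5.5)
Your proposal is correct and follows the paper's own argument. Like the paper, you write $G_{z,A}=M_{z,A}+E_{z,A}$ with $M_{z,A}=(\lambda L(z)+\kappa_S)J-\mathcal A_A$ and use $M_{z,A}\mu_A=(\lambda L(z)+\kappa_S-\Cap(A))\mathbf 1$ to obtain $\mathbf 1^TM_{z,A}^{-1}\mathbf 1$ exactly. You then bound $\|M_{z,A}^{-1}\|$ uniformly using the strict negativity of $\mathcal A_A$ on $\{\mathbf 1^Tv=0\}$, which the paper proves via $v^T\mathcal A_Av=-(2\pi)^{-2}\int_{\TT^2}|P_v(\xi)|^2(1-\varphi(\xi))^{-1}\,\d\xi$ rather than taking it as standard. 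Both arguments finish with a Neumann series and the geometric expansion of $(\lambda L(z)-(\Cap(A)-\kappa_S))^{-1}$. The only cosmetic difference is how $M_{z,A}^{-1}$ is bounded: you use a Schur complement along $\mathrm{span}(\mathbf 1)\oplus\mathbf 1^\perp$, whereas the paper splits $v=\alpha\mu_A+w$ with $w$ in that hyperplane and inverts the compression of $-\mathcal A_A$ there.
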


    Thanks to Proposition \ref{prop:G_A is invertible and with holomorphic inverse}, so long as $\phi$ is sufficiently small, $z\mapsto\mathbf 1^T G_{z,A}^{-1}\mathbf 1$ is holomorphic on $\Delta_{\phi,\eta}$, and so we may extend \eqref{eq: formula for F_A(z)} analytically to $\Delta_{\phi,\eta}$. Letting $\lambda:=1/2\pi\sigma^2$, then according to Corollary \ref{corollaire:dev asymp de 1^T G_(z,A) 1}, we have 
\begin{align}
\label{eq:power series expression for E[R_n^A]}
    \sum_{n\ge0}\EE[\R_n^{A}]z^n&=(1-z)^{-2}L(z)^{-1}\left(\sum_{k=0}^{N-1}\frac{(\Cap(A)-\kappa_S)^k}{\lambda^{k+1}L(z)^k}+O\left(\frac{1}{L(z)^N}\right)\right).
\end{align}
Therefore, thanks to \cite{FlajoletOdlyzko}, Corollary $5$,
\begin{equation*}
    \EE\left[\R_n^{A}\right]=\sum_{k=1}^Nc_k\frac{n}{(\log n)^k}+O\left(\frac{n}{(\log n)^{N+1}}\right),
\end{equation*}
for some real constants $c_k$. To determine the value of the $c_k$, we use \cite{FlajoletOdlyzko} Theorem $3A$. Defining
\begin{equation*}
    e_k^{(\alpha,\gamma)}:=(-1)^k\binom{\gamma}{k}\Gamma(-\alpha)\frac{\d^k}{\d s^k}\left.\left(\frac{1}{\Gamma(-s)}\right)\right\vert_{s=\alpha},
\end{equation*}
as in \cite{FlajoletOdlyzko}, where $k\ge0$ and $\alpha,\gamma\in\CC\backslash\{0,1,2,\dots\}$, we have $c_1=1/\lambda$ and for any $k\in\{2,\dots N\}$,
\begin{align*}
    c_k&=\frac{1}{\lambda}\sum_{\ell=0}^{k-1}\left(\frac{\Cap(A)-\kappa_S}{\lambda}\right)^\ell e_{k-1-\ell}^{(-2,-1-\ell)}\\
    &=\frac{1}{\lambda}\sum_{\ell=0}^{k-1}\binom{k-1}{\ell}\left(\frac{\Cap(A)-\kappa_S}{\lambda}\right)^\ell\frac{\d^{k-1-\ell}}{\d s^{k-1-\ell}}\left.\left(\frac{1}{\Gamma(2-s)}\right)\right\vert_{s=0}\\
    &=\frac{1}{\lambda}\frac{\d^{k-1}}{\d s^{k-1}}\left.\left(\frac{\exp\left(\lambda^{-1}(\Cap(A)-\kappa_S)s\right)}{\Gamma(2-s)}\right)\right\vert_{s=0},
\end{align*}
by Leibniz' formula. Using the fact that $\log\Gamma (1-s)=C_Es+\sum_{\ell\ge2}\zeta(\ell)s^\ell/\ell$ for $|s|<1$, where $\zeta$ is the Riemann zeta function and that $\Gamma(2-s)=(1-s)\Gamma(1-s)$, we get
\begin{align*}
    \frac{\exp\left(\lambda^{-1}(\Cap(A)-\kappa_S)s\right)}{\Gamma(2-s)}&=\exp\bigg(\frac{\Cap(A)-\kappa_S}{\lambda}s-\log(1-s)-\log\Gamma(1-s)\bigg)\\
    &=\exp\bigg(\rho_{A,S}s+\sum_{\ell\ge2}\frac{1-\zeta(\ell)}{\ell}s^\ell\bigg)=\exp\bigg(\sum_{\ell\ge1}x_\ell\frac{s^\ell}{\ell!}\bigg),
\end{align*}
where we let $\rho_{A,S}:=1-C_E+(\Cap(A)-\kappa_S)/\lambda$ and we defined $x_1:=\rho_{A,S}$, $x_\ell:=(\ell-1)!(1-\zeta(\ell))$. By \eqref{eq:fundamental equation of bell polynomials}, we have
\begin{equation*}
    \exp\bigg(\sum_{\ell\ge1}x_\ell\frac{s^\ell}{\ell!}\bigg)=\sum_{\ell\ge0}B_\ell(x_1,\dots,x_\ell)\frac{s^\ell}{\ell!},
\end{equation*}
whence $c(1,A,S)=c_1=1/\lambda$ and 
\begin{equation*}
    c(k,A,S)=c_k=\frac{1}{\lambda}B_{k-1}\left(\rho_{A,S}, 1-\zeta(2), 2(1-\zeta(3)),\dots, (k-2)!(1-\zeta(k-1))\right),
\end{equation*}
    which is the announced result.
\end{proof}

\section{Asymptotic behaviour of normalized hitting times}\label{Asymptotic behaviour of T_A}

We wish to study the law of $T_{A+x}/|x|^2$ as $|x|\rightarrow\infty$. In order to do this, we work with the Laplace transform of $T_{A+x}/|x|^2$.

\begin{proposition}\label{lawofT}
    Let $A$ be a finite subset of $\ZZ^2$. Then for any $ x \in \mathbb{R}^d \setminus \{0\}$,
   \begin{equation}
   \label{limit:uniform convergence of scaled hitting times}
       \lim_{n \rightarrow \infty} \sup_{k\le n}\left|(\log n) \mathbb{P}(T_{\lfloor x\sqrt{n}\rfloor +A}\leq k)-2\pi \sigma^2\int_0^{k/n} \d s \ p_s(x)\right|=0,
   \end{equation}
   where $p_t(x)=(2\pi\sigma^2 t)^{-1}\exp\left(-|x|^2/2 \sigma^2t\right)$. Furthermore, for any $\varepsilon>0$, $x\ne 0$, we have 
   \begin{equation}
       \label{ineq:bound for dominated convergence}
       \sup_{n\ge1}\ (\log n)\PP(T_{\lfloor x\sqrt n\rfloor+A}\le n)\lesssim |x|^\varepsilon+|x|^{-\varepsilon}.
   \end{equation}
\end{proposition}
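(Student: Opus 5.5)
We follow the classical route used for a single point (Le Gall, Le Gall--Rosen) and reduce to it through a last-exit decomposition, so that the set $A$ only enters through bounded corrections. Write $y_n:=\lfloor x\sqrt n\rfloor$. Decomposing at the last visit to $y_n+A$ before time $k$, and using $A=-A$ together with translation invariance,
\begin{equation*}
\PP(T_{y_n+A}\le k)=\sum_{j=0}^{k}\sum_{a\in A}\PP(S_j=y_n+a)\,\PP_{y_n+a}\bigl(S_i\notin y_n+A,\ 1\le i\le k-j\bigr).
\end{equation*}
The escape probability $e_m(a):=\PP_a(T_A^+>m)$ does not depend on $n$. The main input, the analogue for a finite set of the Dvoretzky--Erdős/Kesten estimate $\PP(T_0^+>m)\sim \pi\sigma^2/\log m$, is
\begin{equation*}
\sum_{a\in A}e_m(a)=\frac{2\pi\sigma^2}{\log m}\bigl(1+O(1/\log m)\bigr).
\end{equation*}
We would prove it from the identity already established: $G_{z,A}^{-1}=\mathrm{Id}-H_{z,A}$ gives $(1-z)\sum_{m\ge0}z^m\sum_a e_m(a)=\mathbf 1^TG_{z,A}^{-1}\mathbf 1$. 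Corollary~\ref{corollaire:dev asymp de 1^T G_(z,A) 1} with $N=1$ then yields $\mathbf 1^TG_{z,A}^{-1}\mathbf 1\sim 2\pi\sigma^2/L(z)$. Since $\sum_a e_m(a)$ is monotone in $m$, a standard Tauberian argument (or directly \cite{FlajoletOdlyzko}, since the expansion holds on $\Delta_{\phi,\eta}$) gives the asymptotics above.

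Next we evaluate the main term. Split the $j$-sum at $j\le k-n/\log^3 n$ versus the rest. On the first part, $\log(k-j)=\log n+O(\log\log n)$, so
\begin{equation*}
\sum_a e_{k-j}(a)=\frac{2\pi\sigma^2}{\log n}\,(1+o(1))
\end{equation*}
uniformly. We also need $\PP(S_j=y_n+a)$ to be essentially independent of $a\in A$. The local CLT under \ref{Assumption A1}--\ref{Assumption A2} gives $\PP(S_j=y_n+a)=n^{-1}p_{j/n}(x)+o(n^{-1})$ uniformly for $j\ge \epsilon n$. For $j\le\epsilon n$, the Gaussian factor together with a crude bound on small $j$ (via the moment/Chebyshev bound $\PP(|S_j|\ge|x|\sqrt n/2)$) makes the contribution small, because $p_s(x)\to0$ as $s\to0$ for $x\ne0$. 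Summing the Riemann sum gives $(\log n)\,\PP(\cdot)\approx 2\pi\sigma^2\int_0^{k/n}p_s(x)\,\d s$. The remaining window $j\in(k-n/\log^3n,k]$ contributes at most $\sum_{j\in\text{window}}\PP(S_j\in y_n+A)\lesssim |A|/\log^3 n$ times a bounded density, hence $o(1/\log n)$. Uniformity in $k\le n$ holds because all error terms are controlled by $n$, and for $k\le\epsilon n$ both sides are $o(1)$ after scaling. This proves \eqref{limit:uniform convergence of scaled hitting times}.

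For the bound \eqref{ineq:bound for dominated convergence}, we bound $\PP(T_{y_n+A}\le n)\le \EE[L^{y_n+A}_{2n}]/\min_{a}\EE_{a}[L^{A}_{n}]$ using the standard ratio argument with the strong Markov property at $T$. The denominator is $\gtrsim\log n$ by the Green function asymptotics (Proposition~\ref{prop:expansion of G_z(x)} at $z=1-1/n$). The numerator is $\sum_{j\le 2n}\PP(S_j\in y_n+A)$, which the local CLT and the uniform bound $\PP(S_j=y)\lesssim 1/j$ bound by $C(1+\log_+(1/|x|))$ plus Gaussian decay for large $|x|$. This is $\lesssim |x|^{-\varepsilon}+|x|^{\varepsilon}$, with the $\log(1/|x|)$ absorbed by $|x|^{-\varepsilon}$.

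The main obstacle is the escape-probability asymptotic for a general finite $A$, together with justifying the Tauberian step (the monotonicity or analytic continuation needed). We would first try to extract it from Corollary~\ref{corollaire:dev asymp de 1^T G_(z,A) 1} via monotonicity. A second delicate point is making the local CLT error uniform enough, with no \ref{Assumption A3}, to control the region $j\ll n$ uniformly in $k$.
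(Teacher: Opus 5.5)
Your argument is sound in outline, but it takes a genuinely different route from the paper's proof.

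\textbf{The paper's route.} The paper stays in Laplace variables throughout. It applies the strong Markov property at the first entrance into $A+z$ and uses two facts:
\begin{itemize}
\item $G_q(A-y)\sim|A|\log(1/q)/(2\pi\sigma^2)$ uniformly in $y\in A$;
\item $G_{\lambda/|z|^2}(A+z)\to|A|\int_0^\infty e^{-\lambda t}p_t(\xi)\,\d t$, from Le Gall--Rosen.
\end{itemize}
The two factors $|A|$ then cancel. The paper inverts the Laplace transform through vague convergence, obtains uniformity in $k$ from Dini's theorem, and imports \eqref{ineq:bound for dominated convergence} from the one-point bound of Le Gall--Rosen.

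\textbf{Your route.} You work in real time. The last-exit decomposition puts all dependence on $A$ into $u_m:=\sum_{a\in A}\PP_a(T_A^+>m)$. Your identity $(1-z)\sum_m z^m u_m=\mathbf 1^TG_{z,A}^{-1}\mathbf 1$ is correct; in fact $u_m=\EE[\R_m^A]-\EE[\R_{m-1}^A]$. Monotonicity of $u_m$, Karamata's theorem and the monotone density theorem then give $u_m\sim 2\pi\sigma^2/\log m$, which makes transparent why the limit does not see $A$.

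\textbf{Trade-off.} Your route gives explicit uniformity in $k$ and a self-contained proof of the domination bound via the ratio inequality. The paper's route needs neither escape-probability asymptotics nor pointwise local-limit control for $j\ll n$, at the cost of a Laplace inversion.

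\textbf{Points to adjust.}

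\emph{(i) Assumption (A3).} Corollary~\ref{corollaire:dev asymp de 1^T G_(z,A) 1} rests on Proposition~\ref{prop:expansion of G_z(x)}, hence on \ref{Assumption A3}. This proposition, however, feeds Theorem~\ref{theorem:CLT}, which assumes only \ref{Assumption A1}--\ref{Assumption A2}. For real $z\uparrow1$ you only need the leading order:
\begin{itemize}
\item $G_{z,A}=G_z(0)J-\mathcal A_A+o(1)$ by Abel's theorem;
\item $G_z(0)\sim\lambda L(z)$ by the local limit theorem;
\item the inversion argument from the proof of Proposition~\ref{prop:G_A is invertible and with holomorphic inverse} then gives $\mathbf 1^TG_{z,A}^{-1}\mathbf 1\sim 1/G_z(0)$.
\end{itemize}
This yields $u_m=\frac{2\pi\sigma^2}{\log m}(1+o(1))$, which is all you use. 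The $O(1/\log m)$ error you wrote needs Flajolet--Odlyzko and \ref{Assumption A3}. Likewise, the denominator in your ratio bound only needs $\EE[L_n^0]\sim\lambda\log n$.

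\emph{(ii) One-point constant.} It is $\PP(T_0^+>m)\sim 2\pi\sigma^2/\log m$, not $\pi\sigma^2/\log m$. Your displayed formula for general $A$ is the consistent one.

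\emph{(iii) The range $j\le\epsilon n$.} Here the Gaussian main term is not enough. The additive $o(1/j)$ error of the local limit theorem, summed over $j\le\epsilon n$, is only $o(\log n)$, not $o(1)$. Use instead
\begin{equation*}
\PP(S_j=y)\lesssim\min\bigl(j^{-1},|y|^{-2}\bigr),
\end{equation*}
obtained by splitting at time $\lfloor j/2\rfloor$ and combining $\sup_u\PP(S_m=u)\lesssim 1/m$ with Chebyshev. This gives $\sum_{j\le\epsilon n}\PP(S_j\in y_n+A)\lesssim\epsilon/|x|^2$. The same bound controls your window near $k$ and the numerator of the ratio bound.

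\emph{(iv) Small $k$.} For $k\le 2\epsilon n$ the factor $e_{k-j}$ is not $O(1/\log n)$. Treat that range by monotonicity in $k$ (compare with $k=4\epsilon n$) or by the ratio bound, rather than by the decomposition itself.
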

\begin{proof}
    For any $q>0$, we introduce the measure
    \begin{equation*}
        G_q(A):=\sum_{n\ge0}\exp\left(-q n\right)\PP(S_n\in A).
    \end{equation*}
The strong Markov property at time $T_A$ gives
\begin{equation}
\label{eq:Markov pour G_lambda(A)}
    G_q(A)=\EE\left[\exp(-qT_A)G_q(A-S_{T_A})
    \right].
\end{equation}

Fix $\lambda>0$ and $z\in\ZZ^2\setminus\{0\}$. Applying \eqref{eq:Markov pour G_lambda(A)} to the set $A+z$ and decomposing according to the entrance point, we obtain
\begin{equation}
\label{eq:strong Markov entrance decomposition}
\begin{aligned}
    G_{\frac{\lambda}{|z|^2}}(A+z)&=\EE\left[
        \exp\left(-\frac{\lambda}{|z|^2}T_{A+z}\right)
        G_{\frac{\lambda}{|z|^2}}\left(A+z-S_{T_{A+z}}\right)\right]\\
    &=
    \sum_{y\in A}\EE\left[\exp\left(-\frac{\lambda}{|z|^2}T_{A+z}\right)\1({S_{T_{A+z}}=y+z)}\right]
    G_{\frac{\lambda}{|z|^2}}\left(A-y\right).
\end{aligned}
\end{equation}
For $y\in A$, let
\begin{equation*}
    w_z(y):=\EE\left[\exp\left(-\frac{\lambda}{|z|^2}T_{A+z}\right)\1(S_{T_{A+z}}=y+z)\right].
\end{equation*}
Notice that
\begin{equation}
\label{eq:sum entrance weights}
    \sum_{y\in A}w_z(y)=
    \EE\left[\exp\left(-\frac{\lambda}{|z|^2}T_{A+z}\right)\right].
\end{equation}

For every fixed $u\in\ZZ^2$, the local central limit theorem yields
\begin{equation*}
    G_q(u)
    \sim
    G_q(0)
    \sim
    \frac{\log(1/q)}{2\pi\sigma^2},
    \qquad q\rightarrow0.
\end{equation*}
Since $A$ is finite and
\begin{equation*}
    \frac{\log(|z|^2/\lambda)}{\log |z|^2}
    \longrightarrow 1,
\end{equation*}
we deduce, uniformly in $y\in A$, that
\begin{equation}
\label{eq:asymptotics Green entrance set}
    G_{\frac{\lambda}{|z|^2}}(A-y)=\sum_{a\in A}G_{\frac{\lambda}{|z|^2}}(a-y)=
    \frac{|A|}{2\pi\sigma^2}\log|z|^2\bigl(1+o(1)\bigr).
\end{equation}
Define
\begin{equation*}
    \varepsilon_z(y)
    :=
    1-
    \frac{2\pi\sigma^2}{|A|\log |z|^2}G_{\frac{\lambda}{|z|^2}}(A-y)
\end{equation*}
and
\begin{equation*}
    \delta_z:=\max_{y\in A}|\varepsilon_z(y)|.
\end{equation*}
By \eqref{eq:asymptotics Green entrance set}, we have
$\delta_z\to0$ as $|z|\to\infty$. On the other hand, reasoning as in
\cite[Proposition~5.2]{LeGallRosen1991}, for every fixed
$a\in\ZZ^2$, $\lambda>0$ and $\xi\in\mathbb S^1$,
\begin{equation*}
    G_{\lambda/|z|^2}(z+a)
    \underset{\substack{|z|\to\infty\\z/|z|\to\xi}}
    {\longrightarrow}
    \int_0^\infty \exp(-\lambda t)p_t(\xi)\,\d t.
\end{equation*}
Consequently,
\begin{equation}
\label{eq:scaled Green set limit}
    G_{\frac{\lambda}{|z|^2}}(A+z)
    =
    \sum_{a\in A}G_{\frac{\lambda}{|z|^2}}(z+a)
    \underset{\substack{|z|\to\infty\\z/|z|\to\xi}}
    {\longrightarrow}
    |A|\int_0^\infty \exp(-\lambda t)p_t(\xi)\,\d t.
\end{equation}
We now add and subtract the leading term in
\eqref{eq:asymptotics Green entrance set}. Using
\eqref{eq:strong Markov entrance decomposition} and
\eqref{eq:sum entrance weights}, we obtain
\begin{align}
    \log |z|^2\EE\left[\exp(-\frac{\lambda}{|z|^2}T_{A+z})\right]
    &=
    \sum_{y\in A}w_z(y)\log |z|^2
    \nonumber\\
    &=
    \frac{2\pi\sigma^2}{|A|}
    \sum_{y\in A}w_z(y)G_{\frac{\lambda}{|z|^2}}(A-y)
    +
    \log |z|^2\sum_{y\in A}w_z(y)\varepsilon_z(y)
    \nonumber\\
    &=
    \frac{2\pi\sigma^2}{|A|}G_{\frac{\lambda}{|z|^2}}(A+z)+ \log |z|^2\sum_{y\in A}w_z(y)\varepsilon_z(y).
\label{eq:Laplace transform decomposition}
\end{align}
The second term satisfies
\begin{equation}
\label{eq:bound Laplace remainder}
\begin{aligned}
    \left| \log |z|^2\sum_{y\in A}w_z(y)\varepsilon_z(y)\right|
    &\leq
    \delta_z\log |z|^2\sum_{y\in A}w_z(y)\\
    &=
    \delta_z\log |z|^2
    \EE\left[\exp(-\frac{\lambda}{|z|^2}T_{A+z})\right].
\end{aligned}
\end{equation}
Since $\delta_z\to0$, for all sufficiently large $|z|$, and using \eqref{eq:scaled Green set limit},
\begin{equation*}
    \log |z|^2\EE\left[\exp\left(-\lambda T_{A+z}/|z|^2\right)\right]\leq\frac{2\pi\sigma^2}{|A|}G_{\frac{\lambda}{|z|^2}}(A+z)(1-\delta_z)^{-1}=O(1),
\end{equation*}
whence
\begin{equation*}
    \left|\log |z|^2\sum_{y\in A}w_z(y)\varepsilon_z(y)\right|\leq\delta_z\log |z|^2\EE\left[\exp(-\frac{\lambda}{|z|^2}T_{A+z})\right]\longrightarrow0.
\end{equation*}
Therefore, using again \eqref{eq:scaled Green set limit}, we conclude
that, for every $\lambda>0$ and $\xi\in\mathbb S^1$,
\begin{equation}
\label{eq:limit Laplace normalized hitting time}
    (\log |z|^2)
    \EE\left[
        \exp\left(
            -\lambda\frac{T_{A+z}}{|z|^2}
        \right)
    \right]
    \underset{\substack{|z|\to\infty\\z/|z|\to\xi}}
    {\longrightarrow}
    2\pi\sigma^2
    \int_0^\infty
        \exp(-\lambda t)p_t(\xi)\,\d t.
\end{equation}
In particular, by letting $\mu_z^A(\d t):=(\log |z|^2)\PP(T_{A+z}/|z|^2\in\d t)$ and $\mu_\xi(\d t):=2\pi\sigma^2p_t(\xi)\d t$, then we have shown that for any $\lambda>0$, as $|z|\rightarrow\infty$, $z/|z|\rightarrow\xi$, 
    \begin{equation*}
        \int_0^\infty\exp(-\lambda t)\mu_z^{A}(\d t)\longrightarrow\int_0^\infty\exp(-\lambda t)\mu_\xi(\d t).
    \end{equation*}
    Therefore, $\mu_z^{A}$ converges in the vague topology to $\mu_\xi$ as $|z|\rightarrow\infty$, $z/|z|\rightarrow\xi$, and so for any $t>0$,
    \begin{equation*}
        (\log|z|^2)\PP\left(T_{A+z}\le t|z|^2\right)\longrightarrow2\pi\sigma^2\int_0^t\d s\ p_s(\xi).
    \end{equation*}
    Since both the left and right hand sides of the previous equation are increasing in $t$, then by Dini's second theorem, for any $T>0$,
    \begin{equation}
    \label{limit:uniform convergence 1}
        \sup_{t\in[0,T]}\left|(\log|z|^2)\PP\left(T_{A+z}\le t|z|^2\right)-2\pi\sigma^2\int_0^t\d s\ p_s(\xi)\right|\underset{\substack{|z|\rightarrow\infty \\ z/|z|\rightarrow\xi }}\longrightarrow0.
    \end{equation}
     Letting $x\in\RR^2$, $x\ne 0$, $n\ge1$ and $z_n=\lfloor x\sqrt n\rfloor$, we have for any $\alpha\in[0,1]$,
    \begin{align*}
        (\log n)\PP\left(T_{A+\lfloor x\sqrt n\rfloor}\le \alpha n\right)=&\ \frac{\log n}{\log |z_n|^2}\left(\log|z_n|^2\PP\left(T_{A+z_n}\le \alpha\frac{n}{|z_n|^2}|z_n|^2\right)\right)\\
        \underset{n\rightarrow\infty}\longrightarrow&\ 2\pi\sigma^2\int_0^{\alpha/|x|^2}\d s \ p_s(x/|x|),
    \end{align*}
    where we used the fact that $n/|z_n|^2\rightarrow1/|x|^2$, that $\log$ is slowly varying at infinity and \eqref{limit:uniform convergence 1}. Finally, by definition of $p_s$, note that 
    \begin{equation*}
        p_s(x/|x|)=|x|^2p_{s|x|^2}(x),
    \end{equation*}
    and so
    \begin{equation*}
        \underset{n\rightarrow\infty}\lim(\log n)\PP\left(T_{A+\lfloor x\sqrt n\rfloor}\le \alpha n\right)=2\pi\sigma^2\int_0^\alpha\d s\ p_s(x).
    \end{equation*}
    Once again, we obtain the uniform convergence result \eqref{limit:uniform convergence of scaled hitting times} by Dini's second theorem. It remains to prove \eqref{ineq:bound for dominated convergence}. Recalling \cite{LeGallRosen1991}, Lemma 6.2 (iii), we have 
    \begin{equation*}
        (\log n)\PP(T_{\lfloor x\sqrt n\rfloor}\le n)\lesssim|x|^\varepsilon+|x|^{-\varepsilon}.
    \end{equation*}
    Therefore,
    \begin{equation*}
        (\log n)\PP(T_{\lfloor x\sqrt n\rfloor +A\le n})=\left(\frac{\PP(T_{\lfloor x\sqrt n\rfloor+A}\le n}{\PP(T_{\lfloor x\sqrt n\rfloor}\le n)}\right)(\log n)\PP(T_{\lfloor x\sqrt n\rfloor}\le n)\lesssim |x|^\varepsilon+|x|^{-\varepsilon},
    \end{equation*}
    since \eqref{limit:uniform convergence of scaled hitting times} implies that the sequence $\PP(T_{\lfloor x\sqrt n\rfloor+A}\le n)/\PP(T_{\lfloor x\sqrt n\rfloor}\le n)$ is bounded.
\end{proof}

\begin{proposition}\label{prop limit:uniform convergence of local time}
        Let $A$ be a finite subset of $\ZZ^2$. Then for any $ x \in \mathbb{R}^d \setminus \{0\}$,
   \begin{equation}
   \label{limit:uniform convergence of local time}
       \lim_{n \rightarrow \infty} \sup_{k\le n}\left|\EE\left[L_k^{A+\lfloor x\sqrt n\rfloor}\right]-|A|\int_0^{k/n} \d s \ p_s(x)\right|=0. 
   \end{equation}
   Furthermore, for any $\varepsilon>0$, $x\ne 0$, we have 
   \begin{equation}
       \label{ineq:bound for dominated convergence of local time}
       \sup_{n\ge 1}\EE\left[L_n^{A+\lfloor x\sqrt n\rfloor}\right]\lesssim |x|^\varepsilon+|x|^{-\varepsilon}.
   \end{equation}
    \end{proposition}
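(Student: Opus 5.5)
The plan is to reduce everything to the single-point case by linearity, since $L_k^{A+z}=\sum_{a\in A}L_k^{\{a+z\}}$. Thus
\begin{equation*}
    \EE\left[L_k^{A+\lfloor x\sqrt n\rfloor}\right]=\sum_{a\in A}\sum_{j=0}^k\PP\left(S_j=\lfloor x\sqrt n\rfloor+a\right),
\end{equation*}
and both claims follow once each single-point term is shown to converge to $\int_0^{k/n}\d s\,p_s(x)$ uniformly in $k\le n$ and to obey the bound $|x|^\varepsilon+|x|^{-\varepsilon}$. Summing the $|A|$ terms produces the factor $|A|$.

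For the convergence \eqref{limit:uniform convergence of local time}, fix $a\in A$ and $\alpha\in[0,1]$, and write
\begin{equation*}
    \sum_{j\le\alpha n}\PP\left(S_j=\lfloor x\sqrt n\rfloor+a\right)=\int_0^{\alpha}n\,\PP\left(S_{\lfloor sn\rfloor}=\lfloor x\sqrt n\rfloor+a\right)\d s+O(1/n).
\end{equation*}
The local central limit theorem, available under \ref{Assumption A1} and \ref{Assumption A2}, gives $n\PP(S_{\lfloor sn\rfloor}=y)=p_s(y/\sqrt n)+o(1/s)$ uniformly in $y$. Hence the integrand converges pointwise to $p_s(x)$ for each $s>0$.

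To justify passing to the limit, split the integral at a small $s_0$. On $[s_0,1]$ the convergence is uniform, so the limit passes through. On $[0,s_0]$, apply the standard Gaussian-type bound for small times at distance $|x|\sqrt n$ (via Chebyshev or a large-deviation estimate):
\begin{equation*}
    \sum_{j\le s_0n}\PP\left(S_j=\lfloor x\sqrt n\rfloor+a\right)\le \EE\left[L^{\{y\}}_{\infty}\right]\PP\left(T_{\lfloor x\sqrt n\rfloor+a}\le s_0 n\right)=G(0)\PP\left(T_{\lfloor x\sqrt n\rfloor+a}\le s_0n\right).
\end{equation*}
In dimension two $G(0)$ is infinite, so this is instead done with $G_n(0)\sim(\log n)/(2\pi\sigma^2)$, the Green function up to time $n$. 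Combined with Proposition~\ref{lawofT}, this gives
\begin{equation*}
    \lesssim (\log n)\,\PP\left(T\le s_0n\right)\longrightarrow 2\pi\sigma^2\int_0^{s_0}p_s(x)\,\d s,
\end{equation*}
which is small as $s_0\to0$ since $x\neq0$. This gives pointwise convergence in $\alpha$. Both sides are nondecreasing in $\alpha$ and the limit is continuous, so Dini's second theorem, exactly as in Proposition~\ref{lawofT}, upgrades this to uniformity over $k\le n$.

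For the bound \eqref{ineq:bound for dominated convergence of local time}, I would use the same strong Markov decomposition at $T_{A+z}$ with $z=\lfloor x\sqrt n\rfloor$:
\begin{equation*}
    \EE\left[L_n^{A+z}\right]\le \PP\left(T_{A+z}\le n\right)\max_{y\in A}\sum_{b\in A}\sum_{j\le n}\PP(S_j=b-y)\lesssim |A|\log n\ \PP\left(T_{A+z}\le n\right).
\end{equation*}
Here the sum $\sum_{j\le n}\PP(S_j=u)$ is at most $C\log n$ uniformly in $u$ by the local limit theorem. Then \eqref{ineq:bound for dominated convergence} from Proposition~\ref{lawofT} yields the bound $|x|^\varepsilon+|x|^{-\varepsilon}$ directly. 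This also supplies the small-time control used above.

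The main obstacle I expect is the uniformity of the local limit theorem near $s=0$, i.e. controlling the contribution of small times. The reduction via the hitting time estimate of Proposition~\ref{lawofT} is what I would try first, since it avoids needing any sharper local estimates.
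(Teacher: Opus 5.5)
Your proof is correct, but for the convergence \eqref{limit:uniform convergence of local time} it takes a different route from the paper. Both proofs reduce to single points by linearity, but the paper then routes everything through hitting times. For each $a\in A$ it writes $\EE[L_{\alpha n}^{a+\lfloor x\sqrt n\rfloor}]=\sum_{\ell\le\alpha n}\PP(T_{a+\lfloor x\sqrt n\rfloor}=\ell)\,\EE[L^0_{\alpha n-\ell}]$. For $\ell\le\alpha(1-\varepsilon_n)n$ it replaces $\EE[L^0_{\alpha n-\ell}]$ by $(1+o(1))\log n/(2\pi\sigma^2)$ using slow variation, and it reads the limit $\int_0^\alpha p_s(x)\,\d s$ off Proposition~\ref{lawofT}. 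The delicate region there is late hitting times $\ell\in(\alpha(1-\varepsilon_n)n,\alpha n]$, where little time remains to accumulate local time.

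You instead obtain the bulk of the limit as a Riemann sum of the local limit theorem. Your delicate region is therefore early times $j\le s_0n$, where the uniform $o(1/j)$ error sums to $o(\log n)$ rather than $o(1)$. You control it with the same Markov decomposition, used only as the one-sided bound $\EE[L^{y}_{s_0n}]\le\PP(T_y\le s_0n)\,\EE[L^0_n]$, combined with Proposition~\ref{lawofT}.

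Your version makes the origin of $|A|\int_0^{k/n}p_s(x)\,\d s$ transparent and needs Proposition~\ref{lawofT} only as a tail estimate. The paper's version makes the statement a direct corollary of Proposition~\ref{lawofT} and $\EE[L_n^0]\sim\log n/(2\pi\sigma^2)$, with no further local estimates. Your proof of \eqref{ineq:bound for dominated convergence of local time} is exactly the argument the paper leaves as ``deduced similarly''.

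One caveat: drop the parenthetical about Chebyshev or large deviations, and write $\EE[L^0_n]$ instead of $G(0)$ from the outset. A Chebyshev or maximal-inequality bound $\PP(T_y\le s_0n)\lesssim s_0/|x|^2$ loses exactly the factor $1/\log n$ needed to cancel $\EE[L^0_n]\sim\log n/(2\pi\sigma^2)$. Classical large-deviation bounds would need moment assumptions beyond \ref{Assumption A1}. It is the $1/\log n$ decay in Proposition~\ref{lawofT} that makes your small-time step work.
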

    \begin{proof}
    Let $\alpha\in[0,1]$. For every $a\in A$, applying the Markov propery at time $T_{a+\lfloor x\sqrt n\rfloor}$ we get
    \begin{align}
    \label{eq:markov property for local time}
        \EE\left[L_{\alpha n}^{A+\lfloor x\sqrt n\rfloor}\right]&=\sum_{a\in A}\EE\left[L_{\alpha n}^{a+\lfloor x\sqrt n\rfloor}\right]\nonumber\\
        &=\sum_{a\in A}\sum_{\ell=0}^{\alpha n}\PP\left(T_{a+\lfloor x\sqrt n\rfloor}=\ell\right)\EE\left[L_{\alpha n-\ell}^0\right].
    \end{align}
    Let $\varepsilon_n$ be a non-increasing sequence such that $\varepsilon_n=o(1)$ and $\log\varepsilon_n=o(\log n)$. Since
    \begin{equation*}
        \EE[L_n^0]=(1+o(1))\frac{\log n}{2\pi\sigma^2},
    \end{equation*}
    then $\EE[L_n^0]$ is slowly varying at infinity and in particular, since $n\varepsilon_n\rightarrow\infty$, we have 
    \begin{equation*}
        \EE[L_{\alpha\varepsilon_nn}^0]=(1+o(1))\frac{\log(\varepsilon_nn)}{2\pi\sigma^2}=(1+o(1))\frac{\log n}{2\pi\sigma^2}\left(1+\frac{\log\varepsilon_n}{\log n}\right)=(1+o(1))\frac{\log n}{2\pi\sigma^2}.
    \end{equation*}
    Therefore, uniformly for $\ell\in[\![0,\alpha(1-\varepsilon_n)n]\!]$,
    \begin{equation*}
        \EE[L_{\alpha n-\ell}^0]=(1+o(1))\frac{\log n}{2\pi\sigma^2}.
    \end{equation*}
    Fixing $a\in A$, we write
    \begin{align*}
        \sum_{\ell=0}^{\alpha n}\PP\left(T_{a+\lfloor x\sqrt n\rfloor}=\ell\right)\EE\left[L_{\alpha n-\ell}^0\right]&=\left(\sum_{\ell=0}^{\alpha(1-\varepsilon_n)n}+\sum_{\ell=\alpha(1-\varepsilon_n)n+1}^{\alpha n}\right)\PP\left(T_{a+\lfloor x\sqrt n\rfloor}=\ell\right)\EE\left[L_{\alpha n-\ell}^0\right]\\
        &=:\Sigma_n^1+\Sigma_n^2.
    \end{align*}
    By what we just saw,
    \begin{align*}
        \Sigma_n^1&=(1+o(1))\frac{\log n}{2\pi\sigma^2}\PP\left(T_{a+\lfloor x\sqrt n\rfloor}\le\alpha(1-\varepsilon_n)n\right)\\
        &=(1+o(1))\int_0^{\alpha(1-\varepsilon_n)}\d s\ p_s(x),
    \end{align*}
    where we used \eqref{limit:uniform convergence of scaled hitting times} in the second equality. Furthermore,
    \begin{align*}
        \Sigma_n^2&\le\EE[L_{\alpha\varepsilon_nn}^0]\sum_{\ell=\alpha(1-\varepsilon_n)n+1}^{\alpha n}\PP\left(T_{a+\lfloor x\sqrt n\rfloor}=\ell\right)\\
        &=(1+o(1))\frac{\log n}{2\pi\sigma^2}\PP\left(\alpha(1-\varepsilon_n)n<T_{a+\lfloor x\sqrt n\rfloor}\le \alpha n\right)\\
        &=(1+o(1))\int_{\alpha(1-\varepsilon_n)}^\alpha\d s\ p_s(x)=O(\varepsilon_n),
    \end{align*}
    where again we used \eqref{limit:uniform convergence of scaled hitting times} in the second equality. Combining the previous calculations, we observe that
    \begin{equation*}
        \EE\left[L_{\alpha n}^{A+\lfloor x\sqrt n\rfloor}\right]=|A|(1+o(1))\int_{0}^{\alpha(1-\varepsilon_n)}\d s\ p_s(x)+O(\varepsilon_n),
    \end{equation*}
    whence for any $\alpha\in[0,1]$,
    \begin{equation*}
        \lim_{n\rightarrow\infty}\EE\left[L_{\alpha n}^{A+\lfloor x\sqrt n\rfloor}\right]=|A|\int_0^\alpha\d s\ p_s(x).
    \end{equation*}
    As in the proof of Proposition \ref{lawofT}, the uniform convergence result \eqref{limit:uniform convergence of local time} is a consequence of Dini's second theorem, and \eqref{ineq:bound for dominated convergence of local time} is deduced similarly.
    \end{proof}

\section{Intersections of random walk sausages}\label{Intersections of random walk sausages}

Throughout this section, we will let $S,\widetilde S$ be two independent random walks in $\ZZ^2$ satisfying assumptions \ref{Assumption A1} and \ref{Assumption A2}. We also introduce the notation 
\begin{equation*}
    S_t^{(n)}:=\frac{S_{\lfloor nt\rfloor}}{\sqrt n},\quad \widetilde S_t^{(n)}:=\frac{\widetilde S_{\lfloor nt\rfloor}}{\sqrt n},\quad t\in[0,1],
\end{equation*}
and let $B$, $\widetilde B$ be two independent standard brownian motions in $\RR^2$ such that we have the following joint convergence in the product $J_1$ topology
\begin{equation}
    \label{limit:Donsker's theorem}
    \left(\left(S_t^{(n)}\right)_{t\in[0,1]},\left(\widetilde S_t^{(n)}\right)_{t\in[0,1]}\right)\underset{n\rightarrow\infty}\implies\left(\left(B_{\sigma^2t}\right)_{t\in[0,1]},\left(\widetilde B_{\sigma^2t}\right)_{t\in[0,1]}\right).
\end{equation}
To simplify notation, we will write $S^{(n)}$ instead of $\left(S_t^{(n)}\right)_{t\in[0,1]}$ and $W$ instead of $\left(B_{\sigma^2t}\right)_{t\in[0,1]}$, and similarly for $\widetilde S^{(n)}$ and $\widetilde W$. We fix $A\subseteq\ZZ^2$ and define the cross section
\begin{equation*}
    \I_n^{A}:=\left|(\R_n+A)\cap(\widetilde\R_n+A)\right|.
\end{equation*}
The quantity $\I_n^A$ is the number of distinct sites that the two random walk sausages $\R_n+A$ and $\widetilde\R_n+A$ have in common. We start by estimating the moments of $\I_n^A$.

\begin{proposition}
\label{prop:bound for moments of intersections}
For any $k\ge1$, 
    \begin{equation*}
        \EE\left[\left(\I_n^A\right)^k\right]\le C_{A,k}\frac{n^k}{(\log n)^{2k}}
    \end{equation*}
\end{proposition}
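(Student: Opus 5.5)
Expanding the $k$-th power and using independence of $S$ and $\widetilde S$ should reduce the statement to moment bounds for a single walk hitting $k$ fixed translates of $A$; these will follow from Proposition \ref{lawofT} and the analogous estimate for $A=\{0\}$ in \cite{LeGallRosen1991}. Writing $\I_n^A=\sum_{x\in\ZZ^2}\1(T_{x+A}\le n)\1(\widetilde T_{x+A}\le n)$ (using $A=-A$, so that $x\in R_n+A$ iff $S$ hits $x+A$ before time $n$), we get
\begin{equation*}
    \EE\left[\left(\I_n^A\right)^k\right]=\sum_{x_1,\dots,x_k\in\ZZ^2}\PP\left(T_{x_i+A}\le n,\ \forall i\le k\right)^2 .
\end{equation*}
It therefore suffices to bound $\sum_{x_1,\dots,x_k}\PP(T_{x_i+A}\le n\ \forall i)^2$ by $C n^k(\log n)^{-2k}$.

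The first reduction is to replace the sets $x_i+A$ by points. Let $r$ be the diameter of $A$. The event $\{T_{x+A}\le n\}$ is contained in $\bigcup_{a\in A}\{T_{x+a}\le n\}$, so
\begin{equation*}
    \PP\left(T_{x_i+A}\le n\ \forall i\right)\le\sum_{a_1,\dots,a_k\in A}\PP\left(T_{x_i+a_i}\le n\ \forall i\right).
\end{equation*}
Squaring and applying Cauchy--Schwarz costs at most a factor $|A|^k$. Re-indexing $y_i=x_i+a_i$ turns the sum over $(x_i)$ into a sum over $(y_i)$, with a further factor $|A|^k$. This gives
\begin{equation*}
    \EE\left[\left(\I_n^A\right)^k\right]\le|A|^{2k}\,\EE\left[|R_n\cap\widetilde R_n|^k\right].
\end{equation*}
So the whole proposition reduces to the case $A=\{0\}$, i.e. to the moment bound $\EE[|R_n\cap\widetilde R_n|^k]\lesssim n^k(\log n)^{-2k}$ for intersections of two independent ranges. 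This is precisely what Le Gall and Rosen prove (\cite{LeGallRosen1991}, the moment lemma in Section 6, building on Le Gall \cite{LeGall1986}), and I would simply invoke it with $C_{A,k}=|A|^{2k}C_k$.

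If a self-contained argument is wanted for the point case, I would follow the standard route. Order the $k$ points by their hitting times, which produces $k!$ orderings. By the strong Markov property,
\begin{equation*}
    \PP\left(T_{y_1}\le\dots\le T_{y_k}\le n\right)\le\prod_{i=1}^k\PP_{y_{i-1}}(T_{y_i}\le n),\qquad y_0=0 .
\end{equation*}
Next bound $\PP_{u}(T_v\le n)$ by the kernel $h_n(v-u):=\PP(T_{v-u}\le n)$. Its key properties are $\sum_y h_n(y)\lesssim n/\log n$ (this is the expected range) and $\sum_y h_n(y)^2\lesssim n/(\log n)^2$. The latter follows from Proposition \ref{lawofT}: the bound \eqref{ineq:bound for dominated convergence} gives $h_n(y)\lesssim(\log n)^{-1}(|y/\sqrt n|^{\varepsilon}+|y/\sqrt n|^{-\varepsilon})$, and $h_n(y)$ decays rapidly once $|y|\gg\sqrt n$. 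The squared products over two different orderings then yield a chain-type sum, which is controlled by iterated Cauchy--Schwarz/Hölder in the style of a convolution estimate $\|h_n\ast\cdots\|$.

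The main obstacle is this last step: the square of the sum over orderings mixes two different orderings $\pi,\pi'$, so the bound is a non-chain product $\prod_i h_n(y_{\pi(i)}-y_{\pi(i-1)})h_n(y_{\pi'(i)}-y_{\pi'(i-1)})$. Handling it requires the Hölder argument of Le Gall and Rosen, where each variable appears in at most four factors, together with $L^p$ bounds $\sum_y h_n(y)^p\lesssim n(\log n)^{-p}$ for $p\ge2$. I would rely on the citation for this and present the reduction to points as the genuinely new part.
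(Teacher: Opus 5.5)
Your proof is correct, and it reaches the same endpoint as the paper by a different and cleaner route. That endpoint is the point-case bound $\EE[|R_n\cap\widetilde R_n|^k]\lesssim n^k/(\log n)^{2k}$, which the paper takes from \cite{LeGall1986}, Theorem 5.1, rather than from Section 6 of Le Gall--Rosen.

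The paper argues pathwise. It dominates $\I_n^A$ by $\sum_{a,b\in A}|(R_n+a)\cap(\widetilde R_n+b)|$ and separates the $k$ factors with H\"older. It then removes the relative shift $b-a$ between the two ranges by splitting both paths at first-meeting times $\tau_n,\tilde\tau_n$ and reversing $\widetilde S$ in time, which costs a factor $2C_k$.

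You instead work directly with the moment formula $\sum_{x}\PP(T_{x_i+A}\le n\ \forall i)^2$. There the union bound over $A^k$, Cauchy--Schwarz, and translation invariance of the $x$-sum make the off-diagonal pairs ($S$ hitting $x_i+a_i$, $\widetilde S$ hitting $x_i+a_i'$ with $a_i\neq a_i'$) disappear automatically. The same computation gives $\EE[|R_n\cap(z+\widetilde R_n)|^k]\le\EE[|R_n\cap\widetilde R_n|^k]$ for every $z\in\ZZ^2$ with constant $1$. That is exactly what the paper's path-splitting step is meant to supply.

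Your route gives the explicit constant $|A|^{2k}$ times the point-case constant. It also needs no strong Markov argument. In the paper's version that step requires care: $\tilde\tau_n$ depends on $S_{\tau_n}$, hence on all of $\widetilde R_n$. So $\tilde\tau_n$ is not a stopping time for $\widetilde S$, and the pieces $\widetilde S(0,\tilde\tau_n)$ and $\widetilde S(\tilde\tau_n,n)$ are not directly fresh walks. The paper's approach offers a pathwise comparison of shifted and unshifted intersections. For this statement, though, your $\ell^2$ argument is shorter and fully rigorous.

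Your optional self-contained sketch of the point case is not needed, since the paper also simply invokes the known moment bound.
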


\begin{proof}
    By the definition of $\I_n^{A}$, there is some $C_{A,k}>0$ such that 
    \begin{align*}
        \EE\left[\left(\I_n^{A}\right)^k\right]&\le\sum_{\substack{a_1,\dots,a_k\in A \\ b_1,\dots,b_k\in A}}\EE\left[\prod_{j=1}^k\left|\R_n^{a_j}\cap\widetilde\R_n^{b_j}\right|\right]\\
        &\le\sum_{\substack{a_1,\dots,a_k\in A \\ b_1,\dots,b_k\in A}}\prod_{j=1}^k\EE\left[\left|\R_n^{a_j}\cap\widetilde\R_n^{b_j}\right|^k\right]^{1/k},
    \end{align*}
    where we applied Hölder's inequality. We fix $a,b\in A$ and introduce the random time $\tau_n:=\inf\{j\le n\mid S_j\in\widetilde\R_n^{b-a}\}$, with the convention $\inf\emptyset=\infty$. On the event $\{\tau_n=\infty\}$, we have $\left|\R_n^{a}\cap\widetilde\R_n^b\right|=0$, and on the event $\{\tau_n<\infty\}$, considering $\tilde\tau_n$, the smallest time $j$ for which $\widetilde S_j$ started from $b$ is equal to $S_{\tau_n}$ (the fact that $\tau_n<\infty$ automatically implies that $\tilde\tau_n<\infty$), we bound
    \begin{align*}
        \EE\left[\left|\R_n^{a}\cap\widetilde \R_n^{b}\right|^k\right]&\le C_k\left( \EE\left[\left|\left(S(\tau_n,n)+a\right)\cap(\widetilde S(0,\tilde\tau_n)+b)\right|^k\right]\right.\\
        &\hspace{6em}\left.+\EE\left[\left|\left(S(\tau_n,n)+a\right)\cap(\widetilde S(\tilde\tau_n,n)+b)\right|^k\right]\right)\\
        &\le 2C_k\EE\left[\left|\R_n\cap\widetilde\R_n\right|^k\right],
    \end{align*}
    where we used the fact that the law of $\widetilde S$ is stable by time reversal and that $(i,j)\mapsto |S(0,i)\cap \widetilde S(0,j)|$ is increasing. Recalling \cite{LeGall1986}, Theorem $5.1$, we have $\EE[|\R_n\cap\widetilde\R_n|^k]\le C n^k/(\log n)^{2k}$ for some $C>0$, whence
    \begin{align*}
        \EE\left[\left(\I_n^{A}\right)^k\right]&\le2\sum_{\substack{a_1,\dots,a_k\in A \\ b_1,\dots,b_k\in A}}\EE\left[\left|\R_n\cap\widetilde\R_n\right|^k\right]\le2C|A|^{2k}\frac{n^k}{(\log n)^{2k}},
    \end{align*}
    whence the result.
\end{proof}

We introduce 
\begin{equation*}
    \J_n^{A}:=\sum_{x\in\ZZ^2}L_n^{x+A}\widetilde L_n^{x+A}.
\end{equation*}
The first result we establish is that the discrete intersection local time $\J_n^{A}$ conveniently scaled converges in distribution to the intersection local time of the limiting brownian motions.

\begin{proposition}
\label{prop:convergence of discrete ILT to continuous ILT}
We have the following convergence in distribution
    \begin{equation*}
        \frac{1}{n}\J_n^{A}\underset{n\rightarrow\infty}{\implies}|A|^2\alpha\left([0,1]^2\right).
    \end{equation*}
\end{proposition}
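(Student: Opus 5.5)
The idea is to reduce $\J_n^A$ to the usual discrete intersection local time $\J_n:=\J_n^{\{0\}}=\sum_{x}L_n^x\widetilde L_n^x$. The convergence $n^{-1}\J_n\Rightarrow\alpha([0,1]^2)$ is the case $A=\{0\}$ and is known from Le Gall \cite{LeGall1986} and Le Gall--Rosen \cite{LeGallRosen1991}. We expand the product:
\begin{equation*}
    \J_n^A=\sum_{x\in\ZZ^2}\sum_{a,b\in A}L_n^{x+a}\widetilde L_n^{x+b}=\sum_{a,b\in A}\sum_{y\in\ZZ^2}L_n^{y}\widetilde L_n^{y+b-a}=\sum_{a,b\in A}\J_n(b-a),
\end{equation*}
where $\J_n(u):=\sum_{y}L_n^y\widetilde L_n^{y+u}$ is the intersection local time of $S$ with the shifted walk $\widetilde S-u$. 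Since there are $|A|^2$ pairs $(a,b)$, it suffices to show that for each fixed $u\in\ZZ^2$,
\begin{equation*}
    \frac1n\EE\left[\left|\J_n(u)-\J_n(0)\right|\right]\underset{n\to\infty}\longrightarrow0,
\end{equation*}
or the same statement in $L^2$. Combined with the known result for $\J_n(0)$, Slutsky's lemma then gives the claim, with the limit multiplied by $|A|^2$.

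To prove that $n^{-1}(\J_n(u)-\J_n(0))\to0$ in $L^2$, we write everything in Fourier space. Set $\hat L_n(\theta):=\sum_{k\le n}e^{\i\theta\cdot S_k}$, and similarly $\hat{\widetilde L}_n(\theta)$. Then
\begin{equation*}
    \J_n(u)=(2\pi)^{-2}\int_{\TT^2}\hat L_n(\theta)\overline{\hat{\widetilde L}_n(\theta)}e^{\i\theta\cdot u}\,\d\theta,
\end{equation*}
so $\J_n(u)-\J_n(0)$ carries the extra factor $e^{\i\theta\cdot u}-1=O(|\theta||u|\wedge 1)$. Computing the second moment with independence of $S$ and $\widetilde S$ gives a fourfold integral over $\theta_1,\theta_2$ of
\begin{equation*}
    (e^{\i\theta_1\cdot u}-1)\overline{(e^{\i\theta_2\cdot u}-1)}\,\EE\bigl[\hat L_n(\theta_1)\overline{\hat L_n(\theta_2)}\bigr]\,\EE\bigl[\overline{\hat{\widetilde L}_n(\theta_1)}\hat{\widetilde L}_n(\theta_2)\bigr].
\end{equation*}
Each expectation is a sum over ordered time pairs of products of characteristic functions $\phi(\cdot)^{k}$. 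Using $|\phi(\theta)|\le 1-c|\theta|^2$ near $0$, together with aperiodicity (A2) away from $0$, we obtain the bound
\begin{equation*}
    \EE\bigl[|\hat L_n(\theta_1)\hat L_n(\theta_2)|\bigr]\lesssim \frac{1}{1/n+|\theta_1|^2}\cdot\frac{1}{1/n+|\theta_1-\theta_2|^2}+(\theta_1\leftrightarrow\theta_2),
\end{equation*}
as in \cite{LeGall1986}. Rescaling $\theta=\vartheta/\sqrt n$ over $\TT_n^2$, the second moment of $\J_n(0)$ is of order $n^2$; the corresponding integral converges and is dominated, and the factor $|e^{\i\vartheta\cdot u/\sqrt n}-1|\le |\vartheta||u|/\sqrt n\wedge 2$ tends to $0$ pointwise. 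Dominated convergence therefore yields $\EE[(\J_n(u)-\J_n(0))^2]=o(n^2)$.

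The main obstacle is the domination step. The product of the two expectations is only barely integrable in dimension $2$, because the region where $\vartheta_1,\vartheta_2$ are both large gives logarithmic contributions. The role of the factor $(e^{\i\theta u}-1)$ is precisely to kill the small-$\theta$ region, while the large-$\theta$ region is controlled by the same estimates that give the finiteness of $\EE[\J_n(0)^2]/n^2$. If the direct Fourier bound proves delicate, we fall back on a more probabilistic argument. Let $\J_n^{\epsilon}$ be the version of $\J_n$ smoothed at spatial scale $\epsilon\sqrt n$. The smoothed versions of $\J_n(u)$ and $\J_n(0)$ differ by $O(|u|/(\epsilon\sqrt n))$ times $n$, and the standard $L^2$ approximation estimates, uniform in $u$, for the Le Gall--Rosen approximation $\J_n-\J_n^\epsilon$ then conclude.
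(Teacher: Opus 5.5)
Your proof is correct, and your main route differs from the paper's. The paper never reduces to $A=\{0\}$. It works directly with $\J_n^A=\sum_{a,b\in A}\sum_{i,j\le n}\1(S_i-a=\widetilde S_j-b)$ and inserts the heat-kernel multiplier $\hat f_\varepsilon$ into the Fourier representation. Donsker's theorem then gives $n^{-1}\J_{n,\varepsilon}^A\Rightarrow\sum_{a,b}\int\!\!\int f_\varepsilon(W_u-\widetilde W_v)\,\d u\,\d v=|A|^2\alpha_\varepsilon([0,1]^2)$; the shifts $n^{-1/2}(b-a)$ disappear because $f_\varepsilon$ is continuous. The paper concludes with $\alpha_\varepsilon\to\alpha$ in $L^p$ and Rosen's $L^2$ approximation lemma, which it asserts transfers to the shifted sums at the cost of a constant $C_A$.

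You instead decompose $\J_n^A=\sum_{a,b}\J_n(b-a)$, take the $A=\{0\}$ theorem as a black box, and show $n^{-1}(\J_n(u)-\J_n(0))\to0$ in $L^2$ by the standard two-point Fourier bound plus dominated convergence. This has three advantages:
\begin{itemize}
\item it isolates exactly what is new, namely a bounded shift;
\item it needs only the estimate behind $\EE[\J_n^2]=O(n^2)$, not Rosen's $\varepsilon^\delta$ rate;
\item it in effect justifies the paper's inequality $\EE[(\J_n^A-\J_{n,\varepsilon}^A)^2]\le C_A\EE[(\J_n^{\{0\}}-\J_{n,\varepsilon}^{\{0\}})^2]$. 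That inequality does not follow from any direct comparison of shifted and unshifted sums; it holds because the Fourier bounds are blind to the unimodular factor $e^{\i\theta\cdot u}$.
\end{itemize}
The paper's route delivers at once the joint convergence with $(S^{(n)},\widetilde S^{(n)})$ needed in Proposition~\ref{prop:joint conergence of walks and intersections}. Yours does too if you quote the $A=\{0\}$ result in its joint form, since an $L^2$-negligible perturbation does not change joint limits. Your fallback argument is essentially the paper's proof.

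Two small points. First, the two-point bound is for $\bigl|\EE[\hat L_n(\theta_1)\overline{\hat L_n(\theta_2)}]\bigr|$, obtained by expanding over ordered pairs of times. It does not hold for $\EE[|\hat L_n(\theta_1)\hat L_n(\theta_2)|]$, which is of order $n$ when $\theta_1$, $\theta_2$ and $\theta_1-\theta_2$ are of order one. Second, the obstacle you anticipate does not occur. In dimension two, $(1+|\vartheta_1-\vartheta_2|^2)^{-2}\bigl((1+|\vartheta_1|^2)^{-2}+(1+|\vartheta_2|^2)^{-2}\bigr)$ is integrable on $\RR^4$ because $\int_{\RR^2}(1+|v|^2)^{-2}\,\d v<\infty$, so the domination holds outright. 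The only extra care is the region where $\theta_1-\theta_2$ is close to a nonzero point of $2\pi\ZZ^2$, which contributes $O(n)=o(n^2)$.
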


\begin{proof}
    We write
    \begin{equation*}
        \J_n^{A}=\sum_{a,b\in A}\sum_{i,j=0}^n\sum_{x\in\ZZ^2}\1\left(S_i=x+a\right)\1\left(\widetilde S_j=x+b\right)=\sum_{a,b\in A}\sum_{i,j=0}^n\1\left(S_i-a=\widetilde S_j-b\right).
    \end{equation*}
    Using the fact that $\1(x=y)=(2\pi)^{-2}\int_{\TT^2}\d\xi\exp\left(-\i\langle\xi,x-y\rangle\right)$, we therefore get
    \begin{align*}
        \frac{1}{n}\J_n^{A}&=\frac{1}{n}\sum_{a,b\in A}\sum_{i,j=0}^n\frac{1}{(2\pi)^2}\int_{\TT^2}\d\xi\exp\left(-\i\left\langle\xi,S_i-\widetilde S_j+b-a\right\rangle\right)\\
        &=\sum_{a,b\in A}\int_{[0,1]^2}\d u\d v\frac{1}{(2\pi)^2}\int_{\TT_n^2}\d\xi\exp\left(-\i\left\langle\xi,S_u^{(n)}-\widetilde S_v^{(n)}+n^{-1/2}(b-a)\right\rangle\right)
    \end{align*}
    Let $f_\varepsilon$ denote the heat kernel in $\RR^2$. Its Fourier transform is given by 
    \begin{equation}
    \label{eq:Fourier transform of the heat kernel}
        \hat{f}_\varepsilon(\xi):=\exp\left(-\varepsilon|\xi|^2\right),\quad \xi\in\RR^2,
    \end{equation}
    and by Fourier inversion we have 
    \begin{equation}
    \label{eq:Fourier inversion for the heat kernel}
        f_\varepsilon(x)=\frac{1}{(2\pi)^2}\int_{\RR^2}\d\xi\exp(-\i\langle\xi,x\rangle)\hat{f}_\varepsilon(\xi).
    \end{equation}
    We now introduce a smoothed version of $\J_n^{A}$ denoted $\J_{n,\varepsilon}^{A}$ for all $\varepsilon>0$ and defined by 
    \begin{equation*}
       \frac{1}{n}\J_{n,\varepsilon}^{A}:= \sum_{a,b\in A}\int_{[0,1]^2}\d u\d v\frac{1}{(2\pi)^2}\int_{\TT_n^2}\d\xi\exp\left(-\i\left\langle\xi,S_u^{(n)}-\widetilde S_v^{(n)}+n^{-1/2}(b-a)\right\rangle\right)\hat{f}_\varepsilon(\xi).
    \end{equation*}
    For all $x\in\RR^2$, 
    \begin{equation*}
        \left|\int_{(\TT_n^2)^c}\d\xi\exp(-\i\langle\xi,x\rangle)\hat{f}_\varepsilon(\xi)\right|\le\int_{\pi\sqrt n}^\infty\d t\exp\left(-\varepsilon t^2\right)\le C\exp\left(-c\varepsilon n\right),
    \end{equation*}
    where $c,C$ are positive constants not depending on $x,n$. Combining this with (\ref{eq:Fourier inversion for the heat kernel}) and writing $\int_{\TT_n^2}=\int_{\RR^2}-\int_{(\TT_n^2)^c}$ yields
    \begin{equation*}
        \frac{1}{n}\J_{n,\varepsilon}^{A}=\sum_{a,b\in A}\int_{[0,1]^2}\d u\d v f_\varepsilon\left(S_u^{(n)}-\widetilde S_v^{(n)}+n^{-1/2}(b-a)\right) + O(\exp(-c\varepsilon n)).
    \end{equation*}
    Since $f_\varepsilon$ is smooth, \eqref{limit:Donsker's theorem} therefore implies that
    \begin{align*}
        \left(S^{(n)},\widetilde S^{(n)}, n^{-1}\J_{n,\varepsilon}^{A}\right)&\underset{n\rightarrow\infty}\implies\left(W,\widetilde W,\sum_{a,b\in A}\int_{[0,1]^2}\d u\d v\ f_\varepsilon\left(W_{u}-\widetilde W_{v}\right)\right)\\
        &=\left(W,\widetilde W,|A|^2\alpha_{\varepsilon}\left([0,1]^2\right)\right).
    \end{align*}
    It is well known that for $p\ge1$,
    \begin{equation*}
        \alpha_\varepsilon([0,1]^2)\underset{\varepsilon \rightarrow0}{\overset{L^p}{\longrightarrow}}\alpha\left([0,1]^2\right).
    \end{equation*}
    Finally, by \cite{Rosen1990}, Lemma 1, there exist $c,\delta>0$ such that for sufficiently large $n$,
    \begin{equation*}
        n^{-2}\EE\left[\left(\J_n^{A}-\J_{n,\varepsilon}^{A}\right)^2\right]\le C_A n^{-2}\EE\left[\left(\J_n^{\{0\}}-\J_{n,\varepsilon}^{\{0\}}\right)^2\right]\le C_A\varepsilon^\delta.
    \end{equation*}
    Combining this bound with the two previous convergence results concludes the proof.
\end{proof}

\begin{proposition}
\label{prop:intersections are close to ILT}We have the following convergence in $L^2$
    \begin{equation*}
        \frac{1}{n}\left((\log n)^2\I_n^{A}-\frac{4\pi^2\sigma^4}{|A|^2}\J_n^{A}\right)\underset{n\rightarrow\infty}{\overset{L^2}\longrightarrow}0.
    \end{equation*}
\end{proposition}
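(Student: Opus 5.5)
We follow the strategy of Le Gall \cite{LeGall1986} (Section 5) and Le Gall and Rosen \cite{LeGallRosen1991} for $A=\{0\}$, adapted to sausages by means of Propositions \ref{lawofT} and \ref{prop limit:uniform convergence of local time}. The idea is to prove a second-moment estimate of the form
\begin{equation*}
    \EE\left[\left((\log n)^2\I_n^A-\frac{4\pi^2\sigma^4}{|A|^2}\J_n^A\right)^2\right]=o(n^2).
\end{equation*}
Expanding the square reduces this to showing that the three mixed moments
\begin{equation*}
(\log n)^4\EE[(\I_n^A)^2],\qquad (\log n)^2\EE[\I_n^A\J_n^A],\qquad \EE[(\J_n^A)^2]
\end{equation*}
all have the same leading asymptotics. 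After normalisation by $(4\pi^2\sigma^4/|A|^2)^{m}$, with $m=0,1,2$ the number of $\J$ factors, each should equal $n^2|A|^4\,\EE[\alpha([0,1]^2)^2]\,(1+o(1))$. The moment bounds of Proposition \ref{prop:bound for moments of intersections} guarantee that all quantities are of order $n^2/(\log n)^4$ after scaling, so only leading constants matter.

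\textbf{Step 1: represent each moment as a sum over pairs of sites.} We write
\begin{equation*}
\I_n^A=\sum_x\1(T_{x+A}\le n)\1(\widetilde T_{x+A}\le n),\qquad \J_n^A=\sum_x L_n^{x+A}\widetilde L_n^{x+A}.
\end{equation*}
Independence then gives, for example,
\begin{equation*}
\EE[(\I_n^A)^2]=\sum_{x,y}\PP(T_{x+A}\le n,T_{y+A}\le n)^2.
\end{equation*}
Analogous formulas hold for the mixed term, which involves $\PP(T_{x+A}\le n,T_{y+A}\le n)\,\EE[L_n^{x+A}L_n^{y+A}]$, and for $\EE[(\J_n^A)^2]$. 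We split $\PP(T_{x+A}\le n,T_{y+A}\le n)$ according to which set is hit first. The strong Markov property at that hitting time, decomposed according to the entrance point $a\in A$ as in \eqref{eq:strong Markov entrance decomposition}, expresses it through one-set hitting probabilities: first from $0$ to $x+A$, then from $x+a$ to $y+A$. The local times $\EE[L_n^{x+A}L_n^{y+A}]$ are treated in the same way.

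\textbf{Step 2: scaling limits.} We put $x=\lfloor u\sqrt n\rfloor$, $y=\lfloor v\sqrt n\rfloor$. Proposition \ref{lawofT} gives
\begin{equation*}
(\log n)\PP(T_{x+A}\le k)\to 2\pi\sigma^2\int_0^{k/n}p_s(u)\,\d s
\end{equation*}
uniformly in $k\le n$. Proposition \ref{prop limit:uniform convergence of local time} gives the analogous statement for the local times, with $2\pi\sigma^2$ replaced by $|A|$. The uniformity in $k$ is exactly what allows us to pass to the limit inside the convolution created by the Markov decomposition. The entrance point $a\in A$ only shifts the starting point by $O(1)$, which disappears in the limit, since the one-point asymptotics are uniform over starting points at bounded distance.

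Consequently $(\log n)^2\PP(T_{x+A}\le n,T_{y+A}\le n)$ converges to $(2\pi\sigma^2)^2$ times the continuous two-point kernel, while $\EE[L_n^{x+A}L_n^{y+A}]$ converges to $|A|^2$ times the same kernel. The sum over $x,y$ then becomes a Riemann sum: $n^{-2}\sum_{x,y}$ turns into $\int\d u\,\d v$. For the $\J$ terms we read off the continuous side, since the limit $n^{-2}\EE[(\J_n^A)^2]\to|A|^4\EE[\alpha([0,1]^2)^2]$ is consistent with Proposition \ref{prop:convergence of discrete ILT to continuous ILT} and uniform integrability. The three limits therefore differ only by the factors $(2\pi\sigma^2)^{4}$, $(2\pi\sigma^2)^2|A|^2$ and $|A|^4$, and the prefactor $4\pi^2\sigma^4/|A|^2$ makes them coincide.

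\textbf{Step 3 (main obstacle): dominated convergence.} The pointwise limits hold only for $u,v\neq0$ and $u\neq v$. The contributions from $x$ or $y$ near the origin, from $|x-y|\ll\sqrt n$, and from large $|x|$ must be shown negligible. Near the diagonal, the second hitting probability starts at distance $|x-y|$, which may be much smaller than $\sqrt n$, so the $\log n$ normalisation is off by the ratio $\log n/\log|x-y|^2$.

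To control this we use the uniform bounds \eqref{ineq:bound for dominated convergence} and \eqref{ineq:bound for dominated convergence of local time}, together with the reduction to the case $A=\{0\}$ by comparison, as in the proof of Proposition \ref{prop:bound for moments of intersections}: the hitting probability of $x+A$ is at most $|A|$ times a single-point hitting probability. The region $|x-y|\le\varepsilon\sqrt n$ is then bounded via Le Gall--Rosen's estimates (\cite{LeGallRosen1991}, Lemma 6.2) by $C\varepsilon^{c}n^2/(\log n)^4$, and similarly for large $|x|$ using Gaussian tails. Letting $n\to\infty$ and then $\varepsilon\to0$ concludes the argument.
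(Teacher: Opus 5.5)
Your overall strategy is the paper's: expand the square, write the three moments as double sums over sites of products of one-walk quantities, evaluate these by the strong Markov property at hitting times using Propositions \ref{lawofT} and \ref{prop limit:uniform convergence of local time}, then apply dominated convergence. However, the cross term is set up incorrectly. By independence of $S$ and $\widetilde S$,
\[
\EE\left[\I_n^A\J_n^A\right]=\sum_{x,y}\EE\left[\1(T_{x+A}\le n)L_n^{y+A}\right]\EE\left[\1(\widetilde T_{x+A}\le n)\widetilde L_n^{y+A}\right],
\]
so each factor pairs the hitting indicator of $x+A$ with the local time on $y+A$ of the \emph{same} walk. The sum $\sum_{x,y}\PP(T_{x+A}\le n,T_{y+A}\le n)\EE[L_n^{x+A}L_n^{y+A}]$ that you wrote is instead $\EE[(\sum_x\1(x\in\R_n+A)\widetilde L_n^{x+A})^2]$, the second moment of a different variable. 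It has the same pointwise scaling limit, which is why your constants in Step 2 still match, but it is not an identity. Your Step 1 therefore never treats the quantity actually needed, $(\log n)\EE[\1(T_{x+A}\le n)L_n^{y+A}]$. This needs its own decomposition of the visits to $y+A$:
\begin{itemize}
\item visits after $T_{x+A}$: apply the strong Markov property at $T_{x+A}$, condition on the entrance point $x+a$, then use Proposition \ref{prop limit:uniform convergence of local time};
\item visits before $T_{x+A}$: apply the Markov property at the visit time $k$, then use Proposition \ref{lawofT} started from $y+b$, after removing the event $\{T_{x+A}\le k\}$.
\end{itemize}
The removed event forces an extra passage between the two sets, so it contributes $O(1/\log n)$ after normalisation. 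This yields $2\pi\sigma^2|A|$ times the same two-point kernel; it is the paper's $\Psi_n^1,\Psi_n^3,E_n^2$ computation. The analogous discarded term in your treatment of $\PP(T_{x+A}\le n,T_{y+A}\le n)$ (hit $y+A$, then $x+A$, then $y+A$ again) should also be made explicit; it is $O((\log n)^{-3})$.

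Two smaller points. First, in Step 3 you control large $|x|$ by Gaussian tails, but this section assumes only (A1)--(A2). No Gaussian bounds on hitting probabilities are then available, and the majorant $|x|^\varepsilon+|x|^{-\varepsilon}$ from \eqref{ineq:bound for dominated convergence} is not integrable at infinity. The paper restricts all sums to $|x|,|y|\le K\sqrt n$ and removes the cutoff afterwards:
\begin{itemize}
\item $\I_n^A-\I_{n,K}^A$ and $\J_n^A-\J_{n,K}^A$ vanish unless a walk leaves a ball of radius of order $K\sqrt n$;
\item by Kolmogorov's maximal inequality this has probability $O(K^{-2})$ uniformly in $n$;
\item Cauchy--Schwarz with fourth-moment bounds (Proposition \ref{prop:bound for moments of intersections} for $\I_n^A$, the corresponding bounds for $\J_n^A$) makes the contribution $O(K^{-1})$ after normalisation.
\end{itemize}
With the cutoff in place, your near-diagonal estimate is fine. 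Second, the common value of the three normalised moments is $16\pi^4\sigma^8\,\EE[\alpha([0,1]^2)^2]\,n^2$, not $n^2|A|^4\EE[\alpha([0,1]^2)^2]$; the latter is the unnormalised $\J$ term. No appeal to uniform integrability (which would need higher moments of $\J_n^A/n$) is required, since the kernel computation alone shows the three limits coincide.
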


\begin{proof}
    Let $K>0$. We introduce 
    \begin{equation*}
        \I_{n,K}^{A}:=\left|(\R_n+A)\cap(\widetilde\R_n+A)\cap(K^2\TT_n^2)\right|
    \end{equation*}
    and
    \begin{equation*}
        \J_{n,K}^{A}:=\sum_{x\in \ZZ^2\cap K^2\TT_n^2}L_n^{x+A}\widetilde L_n^{x+A}
    \end{equation*}
    As in \cite{LeGallRosen1991} Proposition $6.3$, it is sufficient to show that
    \begin{equation}\label{cutoff for intersections are close to ILT}
        \lim_{n \rightarrow \infty }\frac{1}{n^2}\EE\bigg[\bigg((\log n)^2\I_{n,K}^{A}-\bigg(\frac{2\pi\sigma^2}{|A|}\bigg)^2\J_{n,K}^{A}\bigg)^2\bigg]=0,
    \end{equation}
    as the cutoff can be removed thanks to Donsker's Theorem. To prove \eqref{cutoff for intersections are close to ILT}, we write
    \begin{align*}
        &\frac{1}{n^2}\EE\bigg[\bigg((\log n)^2\I_{n,K}^{A}-\bigg(\frac{2\pi\sigma^2}{|A|}\bigg)^2\J_{n,K}^{A}\bigg)^2\bigg]\\
        &\hspace{3em}=\frac{(\log n)^4}{n^2}\EE\left[\left(\I_{n,K}^{A}\right)^2\right]-2\left(\frac{2\pi\sigma^2\log n}{|A| n}\right)^2\EE\left[\I_{n,K}^{A}\J_{n,K}^{A}\right]+\frac{1}{n^2}\left(\frac{2\pi\sigma^2}{|A|}\right)^4\EE\left[\left(\J_{n,K}^{A}\right)^2\right].
    \end{align*}
    We study each term in separate steps and show that they have the same asymptotic behaviour.

    \textit{Step 1:} By writing 
    \begin{equation*}
        \I_{n,K}^{A}=\sum_{x\in\ZZ^2\cap K^2\TT_n^2}\1\left(x\in\R_n+A\right)\1\left(x\in\widetilde\R_n+A\right)
    \end{equation*}
     and introducing the notation 
     \begin{equation*}
         \mathcal B_K^{(n)}:=\left\{x\in\RR^2,|\lfloor x\sqrt n\rfloor|\le K\sqrt n\right\},\quad \mathcal B_K:=\left\{x\in\RR^2, |x|\le K\right\},
     \end{equation*}
     we have 
    \begin{align*}
        \EE\left[\left(\I_{n,K}^A\right)^2\right]&=\sum_{x,y\in\ZZ^2\cap K^2\TT_n^2}\PP\left(T_{x+A},T_{y+A}\le n\right)\PP\left(\widetilde T_{x+A},\widetilde T_{y+A}\le n\right)\\
        &=\int_{|x|,|y|\le K\sqrt n}\d x\d y\ \PP\left(T_{\lfloor x\rfloor+A},T_{\lfloor y\rfloor +A}\le n\right)\PP\left(\widetilde T_{\lfloor x\rfloor+A},\widetilde T_{\lfloor y\rfloor+A}\le n\right)\\
        &=n^2\int_{\mathcal B_K^{(n)}\times\mathcal B_K^{(n)}}\d x\d y\ \PP\left(T_{\lfloor x\sqrt n\rfloor+A},T_{\lfloor y\sqrt n\rfloor+A}\le n\right)\PP\left(\widetilde T_{\lfloor x\sqrt n\rfloor+A},\widetilde T_{\lfloor y\sqrt n\rfloor+A}\le n\right).
    \end{align*}
    We let
    \begin{equation*}
        \Phi_n(x,y):=\log(n)^2\PP\left(T_{\lfloor x\sqrt n\rfloor+A},T_{\lfloor y\sqrt n\rfloor+A}\le n\right),
    \end{equation*}
    and define $\widetilde\Phi_n(x,y)$ analogously in such a way that 
    \begin{equation*}
        \frac{(\log n)^4}{n^2}\EE\left[\left(\I_{n,K}^A\right)^2\right]=\int_{\mathcal B_K^{(n)}\times\mathcal B_K^{(n)}}\d x\d y\ \Phi_n(x,y)\widetilde\Phi_n(x,y).
    \end{equation*}
    Let $A,B\subseteq\ZZ^2$. Then
    \begin{equation*}
        \PP\left(T_A,T_B\le n\right)=\PP\left(T_A\le T_B\le n\right)+\PP\left(T_B< T_A\le n\right).
    \end{equation*}
We first see that
\begin{equation*}
    \begin{split}
         \mathbb{P}\left(T_{\lfloor x \sqrt{n}\rfloor+A}\leq T_{\lfloor y \sqrt{n}\rfloor+A}\leq n \right)
        &=\sum_{k=0}^n \mathbb{P}\left( T_{\lfloor x \sqrt{n}\rfloor+A}=k, k\leq T_{\lfloor y \sqrt{n}\rfloor+A}\leq n \right).\\
    \end{split}
\end{equation*}
We use the fact that for any $ k \leq n$,
\begin{equation*}\label{hitting}
    \1\left(k< T_{\lfloor y \sqrt{n} \rfloor +A} \leq n \right) =\1\left( T_{\lfloor y \sqrt{n} \rfloor +A}\circ \theta_k \leq n-k\right) \left( 1-\1\left(T_{\lfloor y \sqrt{n}\rfloor +A} \leq k\right)\right),
\end{equation*}
and let
\begin{equation*}
    \Phi_n^1(x,y):=\left(\log n\right)^2\sum_{k=0}^n \PP\left( T_{\lfloor x \sqrt{n}\rfloor+A}=k,T_{\lfloor y \sqrt{n} \rfloor +A}\circ \theta_k \leq n-k\right),
\end{equation*}
\begin{equation*}
    E_n^1(x,y):= \left(\log n\right)^2\sum_{k=0}^n \PP\left( T_{\lfloor x \sqrt{n}\rfloor+A}=k,T_{\lfloor y \sqrt{n}\rfloor+A}\leq k,T_{\lfloor y \sqrt{n} \rfloor +A}\circ \theta_k \leq n-k\right),
\end{equation*}
so that 
\begin{equation*}
    \begin{split}
        \Phi_n(x,y) &=\Phi_n^1(x,y)+\Phi_n^1(y,x)-E_n^1(x,y)-E_n^1(y,x).
    \end{split}
\end{equation*}
Using the Markov property,
\begin{align*}
    \Phi_n^1(x,y)&= \left(\log n\right)^2 \sum_{k=0}^n \mathbb{E}\left[\mathds{1}(T_{\lfloor x \sqrt{n}\rfloor+A}=k)\mathbb{P}_{S_k}\left(T_{\lfloor y\sqrt{n }\rfloor+A}\leq n-k \right)\right]\\
    &=\left(\log n\right)^2\sum_{k=0}^n\sum_{a \in A} \mathbb{P}\left( S_{k}=\lfloor x \sqrt{n}\rfloor-a\mid T_{\lfloor x\sqrt{n}\rfloor+A}=k\right)\\
    & \hspace{3em}\times\mathbb{P}\left(T_{\lfloor x \sqrt{n}\rfloor+A}=k\right)\mathbb{P}_{\lfloor x \sqrt{n}\rfloor-a}\left(T_{\lfloor y\sqrt{n }\rfloor+A}\leq n-k \right).
\end{align*}
As a consequence of Proposition \ref{lawofT}, we have 
\begin{equation*}
    \PP_{\lfloor x\sqrt n\rfloor-a}\left(T_{\lfloor y\sqrt n\rfloor+A}\le n-k\right)=(1+o(1))\PP_{\lfloor x\sqrt n\rfloor}\left(T_{\lfloor y\sqrt n\rfloor+A}\le n-k\right),
\end{equation*}
uniformly for $a\in A$, $k\le n$. This yields
\begin{align*}
    \Phi_n^1(x,y)&=(\log n)^2\sum_{k=0}^n\sum_{a\in A}\mathbb{P}\left( S_{k}=\lfloor x \sqrt{n}\rfloor-a\mid T_{\lfloor x\sqrt{n}\rfloor+A}=k\right)\\
    & \hspace{3em}\times\mathbb{P}\left(T_{\lfloor x \sqrt{n}\rfloor+A}=k\right)(1+o(1))\mathbb{P}_{\lfloor x \sqrt{n}\rfloor}\left(T_{\lfloor y\sqrt{n }\rfloor+A}\leq n-k \right)\\
    &=(1+o(1))(\log n)^2\sum_{k=0}^n\mathbb{P}\left(T_{\lfloor x \sqrt{n}\rfloor+A}=k\right)\mathbb{P}_{\lfloor x \sqrt{n}\rfloor}\left(T_{\lfloor y\sqrt{n }\rfloor+A}\leq n-k \right).
\end{align*}
Therefore, once again thanks to Proposition \ref{lawofT}, we get
\begin{equation*}
    \lim_{n\rightarrow\infty}\Phi_n^1(x,y)=4\pi^2\sigma^4\int_0^1\d s\ p_s(x)\int_{0}^{1-s}\d t \ p_t (y-x).
\end{equation*}
Furthermore, 
\begin{align*}
    E_n^1(x,y)=\left(\log n\right)^2 \sum_{k=0}^n \EE\left[\1\left(T_{\lfloor x \sqrt{n}\rfloor +A}=k\right)\1\left( T_{\lfloor y \sqrt{n}\rfloor +A}\leq k\right)\PP_{S_k}\left(T_{\lfloor y \sqrt{n}\rfloor +A}\leq n-k\right)\right],
\end{align*}
and the second statement of Proposition \ref{lawofT} guarantees for every $x,y\in\RR^2$ the existence of $C_{x,y}$ such that 
\begin{equation*}
    \sum_{k=0}^n \EE\left[\1\left(T_{\lfloor x \sqrt{n}\rfloor +A}=k\right)\1\left( T_{\lfloor y \sqrt{n}\rfloor +A}\leq k\right)\PP_{S_k}\left(T_{\lfloor y \sqrt{n}\rfloor +A}\leq n-k\right)\right] \leq \frac{C_{x,y}}{\left(\log n\right)^3}.
\end{equation*}
In particular, for any $x,y\in\RR^2$,
\begin{equation*}
    \lim_{n \rightarrow \infty } E_n^1(x,y)=0
\end{equation*}
Combining these two estimates, we get  
\begin{equation*}
    \lim_{n \rightarrow \infty} \Phi_n(x,y)=4\pi^2 \sigma^4\left(\int_0^1 \d s \ p_s(x)\int_0^{1-s}\d t \  p_t(y-x) + \int_0^1 \d s \ p_s(y)\int_0^{1-s}\d t \  p_t(x-y)\right)
\end{equation*}
Moreover, \eqref{ineq:bound for dominated convergence} implies that there exists $C_{\varepsilon, A}>0$ such that 
\begin{equation}\label{bound}
    \Phi_n(x,y)\leq C_{\varepsilon, A}\left(|y-x|^{\varepsilon}+|y-x|^{-\varepsilon}\right)\left[\left(|x|^{\varepsilon}+|x|^{-\varepsilon}\right)+ \left(|y|^{\varepsilon}+|y|^{-\varepsilon}\right)\right]
\end{equation}
The same argument applies to $\widetilde{\Phi}_n(x,y)$. We can now apply the dominated convergence theorem, since the right-hand-side of \eqref{bound} is square integrable over $\mathcal B_K\times\mathcal B_K$ as soon as $\varepsilon$ is sufficiently small, and obtain
\begin{align}
        \lim_{n \rightarrow \infty}\frac{(\log n)^4}{n^2}\EE\left[\left(\I_{n,K}^{A}\right)^2\right]&= 16\pi^4 \sigma^8 \int_{\mathcal B_K\times\mathcal B_K}\d x\d y\ \left(\int_0^1 \d s \ p_s(x)\int_0^{1-s}\d t \  p_t(y-x)\right.\nonumber\\
        &\hspace{13em}\left.+ \int_0^1 \d s \ p_s(y)\int_0^{1-s}\d t \  p_t(x-y)\right)^2\nonumber
\end{align}
\textit{Step 2:} We now study the term $\EE\left[\I_n^{A}\J_n^{A}\right]$. Performing the same scaling as in \textit{Step 1}, we get
\begin{equation*}
   \left(\frac{2\pi\sigma^2\log n}{|A| n}\right)^2\EE\left[\I_{n,K}^{A}\J_{n,K}^{A}\right]=\int_{\mathcal B_K^{(n)}\times\mathcal B_K^{(n)}}\d x\d y\ \Psi_n(x,y)\tilde\Psi_n(x,y),
\end{equation*}
where we let
\begin{equation*}
    \Psi_n(x,y):=\frac{2 \pi \sigma^2\log n}{|A|}\EE\bigg[\1\left(\lfloor x\sqrt n\rfloor\in\R_n+A\right)\sum_{k=0}^n\1\left(S_k\in\lfloor y\sqrt n\rfloor + A\right)\bigg],\quad x,y\in\RR^2,
\end{equation*}
and define $\tilde\Psi_n(x,y)$ analogously by replacing $\R_n$ and $S_k$ by $\tilde\R_n$ and $\tilde S_k$ respectively. We now divide the sum in $\Psi_n(x,y)$ into two parts, and write $\Psi_n(x,y)=:\Psi_n^1(x,y)+\Psi_n^2(x,y)$, where $\Psi_n^1(x,y)$ and $\Psi_n^2(x,y)$ are defined as follows.
\begin{align*}
    \Psi_n^1(x,y):=\frac{2 \pi \sigma^2\log n}{|A|}\EE\bigg[\1\left(\lfloor x\sqrt n\rfloor\in\R_n+A\right)\sum_{k=T_{\lfloor x \sqrt{n}\rfloor +A}}^n\1\left(S_k\in\lfloor y\sqrt n\rfloor + A\right)\bigg],
\end{align*}
\begin{align*}
    \Psi_n^2(x,y):=\frac{2 \pi \sigma^2\log n}{|A|} \sum_{k=0}^n \PP\left( S_k \in \lfloor y \sqrt{n}\rfloor +A, k< T_{\lfloor x \sqrt{ n}\rfloor +A}\leq n\right).
\end{align*}
Looking at $\Psi_n^1(x,y)$ first, we see that
\begin{align*}
    \left(\frac{2 \pi \sigma^2\log n}{|A|}\right)^{-1}\Psi_n^1(x,y)&=\sum_{\ell=0}^n\EE\bigg[\1\left(T_{\lfloor x\sqrt n\rfloor+A}=\ell\right)\sum_{k=\ell}^n\1\left(S_k\in\lfloor y\sqrt n\rfloor+A\right)\bigg]\\
    &=\sum_{\ell=0}^n\EE\bigg[\1\left(T_{\lfloor x\sqrt n\rfloor+A}=\ell\right)\EE_{S_\ell}\bigg[\sum_{k=0}^{n-\ell}\1\left(S_k\in\lfloor y\sqrt n\rfloor+A\right)\bigg]\bigg]\\
    &=\sum_{a\in A}\sum_{\ell=0}^n\PP\left( S_{\ell}=\lfloor x \sqrt{n}\rfloor -a \mid T_{\lfloor x \sqrt{ n} \rfloor +A=\ell}\right)\\
    &\hspace{5em}\times\PP\left(T_{\lfloor x\sqrt n\rfloor+A}=\ell\right)\EE_{\lfloor x\sqrt n\rfloor-a}\left[L_{n-\ell}^{\lfloor y \sqrt{n}\rfloor+A}\right],
\end{align*}
Using Proposition \ref{prop limit:uniform convergence of local time}, we have that, uniformly for $a \in A$ and $k \leq n$,
\begin{equation}\label{locat time inv}
    \EE_{\lfloor x\sqrt n\rfloor-a}\left[L_{n-k}^{\lfloor y \sqrt{n}\rfloor+A}\right]=(1+o(1)) \EE_{\lfloor x\sqrt n\rfloor}\left[L_{n-k}^{\lfloor y \sqrt{n}\rfloor+A}\right].
\end{equation}
This implies that
\begin{align*}
    \left(\frac{2 \pi \sigma^2\log n}{|A|}\right)^{-1}\Psi_n^1(x,y)&= (1+o(1)) \sum_{\ell=0}^n \PP\left(T_{\lfloor x\sqrt n\rfloor+A}=\ell\right)\EE_{\lfloor x\sqrt n\rfloor}\left[L_{n-\ell}^{\lfloor y \sqrt{n}\rfloor+A}\right].
\end{align*}
Thanks to Proposition \ref{lawofT} and Proposition \ref{prop limit:uniform convergence of local time}, we get
\begin{equation}
    \lim_{n \rightarrow \infty} \Psi_n^1(x,y)=4\pi^2\sigma^4\int_0^1\d s\ p_s(x)\int_{0}^{1-s}\d t \ p_t (y-x).
\end{equation}

For $\Psi_n^2(x,y)$, we again use \eqref{hitting}, which allows us to write $\Psi_n^2(x,y)$ as
\begin{align*}
     \Psi_n^2(x,y)&:= \Psi_n^{3}(x,y)- E_n^2(x,y),
\end{align*}
where
\begin{equation*}
     \Psi_n^{3}(x,y):=\left(\frac{2 \pi \sigma^2\log n}{|A|}\right)
     \sum_{k=0}^n \PP\left(S_k \in \lfloor y \sqrt{n}\rfloor +A, T_{x \sqrt{n}+A}\circ \theta_k \leq n-k\right),
\end{equation*}
and 
\begin{equation*}
    E_n^2(x,y):=  \left(\frac{2 \pi \sigma^2\log n}{|A|}\right)\sum_{k=0}^n \PP\left( S_k \in \lfloor y \sqrt{n}\rfloor +A, T_{\lfloor x \sqrt{ n}\rfloor +A}\leq k, T_{\lfloor x \sqrt{ n}\rfloor +A}\circ \theta_k \leq n-k\right).
\end{equation*}
For $\Psi_n^{3}(x,y)$,
\begin{align*}
     \Psi_n^{3}(x,y)& =\left(\frac{2 \pi \sigma^2\log n}{|A|}\right)\sum_{k=0}^n \EE\left[\1\left( S_k \in \lfloor y \sqrt{n}\rfloor+A\right)\PP_{S_k}\left(T_{\lfloor x \sqrt{n}\rfloor+A}\leq n-k\right)\right]\\
     & = \left(\frac{2 \pi \sigma^2\log n}{|A|}\right)\sum_{a \in A}\sum_{k=0}^n \PP\left(S_k=\lfloor y \sqrt{n }\rfloor+a\mid T_{\lfloor y \sqrt{n }\rfloor+A}=k\right)\times \PP\left(T_{\lfloor y \sqrt{n }\rfloor+A}=k\right)\\
     & \hspace{3em} \times\PP_{\lfloor y \sqrt{n }\rfloor+a}\left( T_{\lfloor x \sqrt{n }\rfloor+A}\leq n-k\right)
\end{align*}     
An argument analogous to that used for $\Phi_n^1(x,y)$ yields
\begin{equation}
    \lim_{n \rightarrow \infty } \Psi_n^3(x,y)=4\pi^2\sigma^4\int_0^1\d s\ p_s(y)\int_{0}^{1-s}\d t \ p_t (x-y).
\end{equation}
And
\begin{align*}
    E_n^2(x,y)&= \left(\frac{2 \pi \sigma^2\log n}{|A|}\right)\sum_{k=0}^n \PP\left( S_k \in \lfloor y \sqrt{n}\rfloor +A, T_{\lfloor x \sqrt{ n}\rfloor +A}\leq k, T_{\lfloor x \sqrt{ n}\rfloor +A}\circ \theta_k \leq n-k\right)\\
    & = \left(\frac{2 \pi \sigma^2\log n}{|A|}\right)\sum_{k=0}^n \EE\left[ \1\left(T_{\lfloor x \sqrt{n}\rfloor +A}\leq k\right) \1\left( S_k \in \lfloor y \sqrt{n}\rfloor +A\right)\PP_{S_k}\left( T_{\lfloor x \sqrt{ n}\rfloor +A}\leq n-k\right)\right]
\end{align*}
Proceeding as in Step 1 and applying Proposition \ref{lawofT}, we obtain
\begin{equation*}
    \lim_{n \rightarrow \infty}E_n^2(x,y)=0
\end{equation*}
Combining these estimates, treating $ \widetilde{\Psi}_n(x,y)$ analogously, and using \eqref{ineq:bound for dominated convergence of local time} to apply the dominated convergence Theorem, we conclude that
\begin{align}
    \lim_{n \rightarrow \infty} \left(\frac{2\pi\sigma^2\log n}{|A| n}\right)^2\EE\left[\I_{n,K}^{A}\J_{n,K}^{A}\right]&=16\pi^4 \sigma^8 \int_{\mathcal B_K\times\mathcal B_K}\d x\d y\ \left(\int_0^1 \d s \ p_s(x)\int_0^{1-s}\d t \  p_t(y-x)\right.\nonumber\\
        &\hspace{10em}\left.+ \int_0^1 \d s \ p_s(y)\int_0^{1-s}\d t \  p_t(x-y)\right)^2\nonumber
\end{align}
\textit{Step 3:} We now turn to the remaining term. Following the same steps, we get
\begin{align*}
    \frac{1}{n^2}\left(\frac{2\pi\sigma^2}{|A|}\right)^4\EE\left[\left(\J_{n,K}^{A}\right)^2\right]& =\int_{\mathcal B_K^{(n)}\times\mathcal B_K^{(n)}}\d x\d y\ \Xi_n(x,y)\widetilde\Xi_n(x,y). 
\end{align*}
where 
\begin{align*}
    \Xi_n(x,y)&:= \left(\frac{2\pi \sigma^2}{|A|}\right)^2\sum_{k=0}^n \sum_{\ell=0}^n \EE\left[ \1\left( S_k \in \lfloor x \sqrt{n}\rfloor +A\right)\1\left( S_{\ell} \in \lfloor y \sqrt{n}\rfloor +A\right)\right]
\end{align*}
The quantity $\widetilde{\Xi}_n(x,y)$ is defined analogously. We first treat $\Xi_n(x,y)$, since the analysis of $\widetilde{\Xi}_n(x,y)$ follows along the same lines. We decompose $\Xi_n(x,y)$ as follows:
\begin{align*}
    \Xi_n(x,y)&=\Lambda_n^1(x,y)+\Lambda_n^2(x,y)
\end{align*}
where 
\begin{align*}
    \Lambda_n^1(x,y):=\left(\frac{2\pi\sigma^2}{|A|}\right)^2  \sum_{k=0}^n\EE\left[ \1\left( S_k \in \lfloor x \sqrt{n}\rfloor +A\right)\sum_{\ell=k}^n \1\left( S_{\ell}\in \lfloor y \sqrt{n} \rfloor +A\right)\right]
\end{align*}
\begin{align*}
    \Lambda_n^2(x,y):= \left(\frac{2\pi\sigma^2}{|A|}\right)^2  \sum_{k=0}^n\EE\left[ \1\left( S_k \in \lfloor x \sqrt{n}\rfloor +A\right)\sum_{\ell=0}^{k-1} \1\left( S_{\ell} \in \lfloor y \sqrt{n} \rfloor +A\right)\right]
\end{align*}
The first term can be handled by applying the Markov property at $S_k$, while the second term can be handled by applying the Markov property at $S_{\ell}$.
\begin{equation*}
    \begin{split}
    \Lambda_n^1(x,y)&= \left(\frac{2\pi\sigma^2}{|A|}\right)^2 \sum_{k=0}^n \left[\1\left( S_k \in \lfloor x \sqrt{n}\rfloor +A\right)\EE_{S_k}\left[\sum_{\ell=0}^{n-k} \1\left( S_{\ell} \in \lfloor y \sqrt{n}\rfloor +A\right)\right] \right]\\
    & =\left(\frac{2\pi\sigma^2}{|A|}\right)^2 \sum_{a\in A} \sum_{k=0}^n \PP\left(S_k =\lfloor x \sqrt{ n}\rfloor +a\mid S_k \in \lfloor x \sqrt{ n}\rfloor +A\right) \times \PP\left(S_k \in \lfloor x \sqrt{n}\rfloor+A\right)\\
    & \hspace{3em} \times \EE_{\lfloor x \sqrt{ n}\rfloor +a}\left[ L_{n-k}^{\lfloor y \sqrt{n}\rfloor +A}\right]\\
    \end{split}
 \end{equation*}
 Using again \eqref{locat time inv}, we get
 \begin{align*}
     \Lambda_n^1(x,y)&=\left(\frac{2\pi\sigma^2}{|A|}\right)^2 \sum_{k =0}^n \PP\left(S_k \in \lfloor x \sqrt{n}\rfloor+A\right)\EE_{\lfloor x \sqrt{ n}\rfloor}\left[ L_{n-k}^{\lfloor y \sqrt{n}\rfloor +A}\right]+o(1)
 \end{align*}
It follows immediately from Proposition \ref{prop limit:uniform convergence of local time} that 
\begin{align*}
    \lim_{n \rightarrow \infty} \Lambda_n^1(x,y)& =4\pi^2\sigma^4\int_0^1\d s\ p_s(x)\int_{0}^{1-s}\d t \ p_t (y-x).
\end{align*}
And $\Lambda_n^2(x,y)$ can be treated analogously.
We conclude that
\begin{align}
    \lim_{n \rightarrow \infty}\frac{1}{n^2}\left(\frac{2\pi\sigma^2}{|A|}\right)^4\EE\left[\left(\J_{n,K}^{A}\right)^2\right]&=16\pi^4 \sigma^8 \int_{\mathcal B_K\times\mathcal B_K}\d x\d y\ \left(\int_0^1 \d s \ p_s(x)\int_0^{1-s}\d t \  p_t(y-x)\right.\nonumber\\
        &\hspace{10em}\left.+ \int_0^1 \d s \ p_s(y)\int_0^{1-s}\d t \  p_t(x-y)\right)^2\nonumber
\end{align}
by following the same arguments as in Step 1. This completes the proof of \eqref{cutoff for intersections are close to ILT}. 
\end{proof}
As a corollary of Propositions \ref{prop:convergence of discrete ILT to continuous ILT} and \ref{prop:intersections are close to ILT}, we obtain the following 

\begin{proposition}
\label{prop:joint conergence of walks and intersections}
Let $X$, $\widetilde X$ be two independent centered random walks in $\ZZ^2$ with covariance $\sigma^2\mathrm{Id}$ such that the following convergence holds in the usual product $J_1$ topology
\begin{equation*}
    n^{-1/2}\left(X_{\lfloor nt\rfloor},\widetilde X_{\lfloor nt\rfloor}\right)_{t\in[0,1]}\underset{n\rightarrow\infty}\implies\left(W_{\sigma^2t},\widetilde W_{\sigma^2t}\right)_{t\in[0,1]},
\end{equation*}
where $W$, $\widetilde W$ are independent standard brownian motions in $\RR^2$. Then by letting $\R_n$, $\widetilde\R_n$ denote the respective ranges of $X$, $\widetilde X$ and $\I_n^{A}=|(\R_n+A)\cap(\widetilde\R_n+A)|$, we have
    \begin{equation*}
        \left((X_t^{(n)})_{t\le 1},(\widetilde X_t^{(n)})_{t\le 1},\frac{(\log n)^2}{n}\I_n^{A},\right)\underset{n\rightarrow\infty}\implies\left( (W_t)_{t\le\sigma^2}, (\widetilde W_t)_{t\le\sigma^2}, 4\pi^2\sigma^2\alpha\left([0,1]^2\right)\right),
    \end{equation*}
    where $X_t^{(n)}:= n^{-1/2}X_{\lfloor nt\rfloor}$ and where $\alpha$ is the intersection local time of $W$ and $\widetilde W$.
\end{proposition}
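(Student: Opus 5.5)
The proof combines Proposition \ref{prop:convergence of discrete ILT to continuous ILT} and Proposition \ref{prop:intersections are close to ILT} through a Slutsky-type argument. The only extra work is to make the convergence joint with the rescaled walks.

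\textbf{Step 1: the smoothed functional.} In the proof of Proposition \ref{prop:convergence of discrete ILT to continuous ILT} we showed
\begin{equation*}
\frac1n\J_{n,\varepsilon}^A=\sum_{a,b\in A}\int_{[0,1]^2}\d u\,\d v\, f_\varepsilon\bigl(X_u^{(n)}-\widetilde X_v^{(n)}+n^{-1/2}(b-a)\bigr)+O(e^{-c\varepsilon n}).
\end{equation*}
The map $(w,\widetilde w)\mapsto\int\!\!\int f_\varepsilon(w_u-\widetilde w_v)\,\d u\,\d v$ is continuous at continuous paths for the $J_1$ topology, since $f_\varepsilon$ is bounded and Lipschitz. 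The shift $n^{-1/2}(b-a)$ is uniformly negligible by the same Lipschitz bound. The hypothesis and the continuous mapping theorem therefore give
\begin{equation*}
\Bigl(X^{(n)},\widetilde X^{(n)},\tfrac1n\J_{n,\varepsilon}^A\Bigr)\Longrightarrow\bigl(W,\widetilde W,|A|^2\alpha_\varepsilon([0,1]^2)\bigr).
\end{equation*}

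\textbf{Step 2: removing the smoothing.} Let $\Phi$ be a bounded Lipschitz test function of the triple. We split $|\EE[\Phi(X^{(n)},\widetilde X^{(n)},\J_n^A/n)]-\EE[\Phi(W,\widetilde W,|A|^2\alpha([0,1]^2))]|$ into three pieces:
\begin{itemize}
\item the discrete error, controlled by the bound $n^{-2}\EE[(\J_n^A-\J_{n,\varepsilon}^A)^2]\le C_A\varepsilon^\delta$ used in Proposition \ref{prop:convergence of discrete ILT to continuous ILT} (from Rosen's Lemma 1);
\item the Step 1 limit at fixed $\varepsilon$;
\item the continuous error, controlled by the $L^2$ convergence $\alpha_\varepsilon\to\alpha$.
\end{itemize}
Letting $n\to\infty$ and then $\varepsilon\to0$ gives the joint convergence
\begin{equation*}
\bigl(X^{(n)},\widetilde X^{(n)},\J_n^A/n\bigr)\Longrightarrow\bigl(W,\widetilde W,|A|^2\alpha([0,1]^2)\bigr).
\end{equation*}

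\textbf{Step 3: passing to $\I_n^A$.} By Proposition \ref{prop:intersections are close to ILT}, $\frac{(\log n)^2}{n}\I_n^A-\frac{4\pi^2\sigma^4}{|A|^2}\frac{\J_n^A}{n}\to0$ in $L^2$, hence in probability. Slutsky's lemma applied to the triple then gives the limit $\frac{4\pi^2\sigma^4}{|A|^2}|A|^2\alpha([0,1]^2)=4\pi^2\sigma^4\alpha([0,1]^2)$, where $\alpha$ is the intersection local time of the time-changed Brownian motions $W_{\sigma^2\cdot}$. The constant in the statement depends on this normalisation: rewriting $\alpha$ in terms of the intersection local time of the standard $W,\widetilde W$ up to time $\sigma^2$ introduces the scaling factor, and one has to check that this produces the stated $4\pi^2\sigma^2$. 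I would record this scaling check explicitly.

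\textbf{Main obstacle.} The only delicate point is the joint convergence in Step 2. Proposition \ref{prop:convergence of discrete ILT to continuous ILT} was stated only marginally, so I would re-run its argument with test functions of the whole triple, as above, rather than cite the proposition directly.
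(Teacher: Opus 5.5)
Your proposal is correct and follows the paper's route: the paper states this proposition as an immediate corollary of Propositions \ref{prop:convergence of discrete ILT to continuous ILT} and \ref{prop:intersections are close to ILT}, whose proof already proves the smoothed convergence jointly with the walks, and you simply make the joint approximation step and the Slutsky step explicit. Your deferred constant check also works out: setting $W'_t:=\sigma^{-1}W_{\sigma^2t}$, which is a standard Brownian motion, the intersection local time of the time-changed motions equals $\sigma^{-2}\alpha'([0,1]^2)$ jointly with the paths, so your limit $4\pi^2\sigma^4\alpha([0,1]^2)$ becomes $4\pi^2\sigma^2\alpha'([0,1]^2)$, which is the stated constant.
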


\section{The fluctuations of the sausage volume}\label{Proof of the main result}

In this section, we prove Theorem \ref{theorem:CLT}. Once the proof is complete, we obtain as a by-product a strong law of large numbers for $\R_n^{A}$

\subsection{The fluctuations}

We introduce the shorthand notation $\R_n^{A}:=|\R_n+A|$. Since $(A\cup B)+C=(A+C)\cup(B+C)$ for any subsets $A,B,C\subseteq\ZZ^2$, then by writing $\R_n=S(0,n/2)\cup S(n/2,n)$, we get
\begin{align*}
    \R_n^{A}&=\left|\left(S\left(0,\frac n2\right)+A\right)\cup\left(S\left(\frac n2,n\right)+A\right)\right|\\
    &=\left|\left(S\left(0,\frac n2\right)+A\right)\right|+\left|\left(S\left(\frac n2,n\right)+A\right)\right|\\
    &\hspace{5em}-\left|\left(S\left(0,\frac n2\right)+A\right)\cap\left(S\left(\frac n2,n\right)+A\right)\right|.
\end{align*}
Iterating this procedure $p$ times yields the following identity
\begin{align}
    \R_n^{A}&=\sum_{j=1}^{2^p}\left|S\left(\frac{j-1}{2^p}n,\frac{j}{2^p}n\right)+A\right|\nonumber\\
    &\hspace{3em}-\sum_{q=1}^p\sum_{\ell=1}^{2^{q-1}}\left|\left(S\left(\frac{2\ell-2}{2^q}n,\frac{2\ell-1}{2^q}n\right)+A\right)\cap\left(S\left(\frac{2\ell-1}{2^q}n,\frac{2\ell}{2^q}n\right)+A\right)\right|\nonumber\\
    &=:\sum_{j=1}^{2^p}\R_n^{A}(j,p)-\sum_{q=1}^p\sum_{\ell=1}^{2^{q-1}}\I_n^{A}(\ell,q).\label{eq:dyadic decomposition of R_n^A}
\end{align}
Note that for every $p\ge1$, the $(\R_n^{A}(j,p))_{1\le j\le 2^p}$ are $i.i.d.$ and distributed as $\R_{n/2^p}^{A}$. For $q\ge1$, $\ell\le2^{q-1}$, we introduce
\begin{equation*}
    D_{\ell,q}:=\left[\frac{2\ell-2}{2^q},\frac{2\ell-1}{2^q}\right]\times\left[\frac{2\ell-1}{2^q},\frac{2\ell}{2^q}\right],
\end{equation*}
and for any $p\ge1$,
\begin{equation*}
    \Gamma_p:=\sum_{q=1}^p\sum_{\ell=1}^{2^{q-1}}\left(\alpha(D_{\ell,q})-\EE[\alpha(D_{\ell,q})]\right).
\end{equation*}
Then it is well known (\cite{LeGall1986}) that 
\begin{equation*}
    \Gamma_p\underset{p\rightarrow\infty}{\overset{L^2}\longrightarrow}\gamma_1.
\end{equation*}
By proposition \ref{prop:joint conergence of walks and intersections}, we have
\begin{equation*}
    \frac{(\log n)^2}{n}\sum_{q=1}^p\sum_{\ell=1}^{2^{q-1}}\overline\I_n^{A}\underset{n\rightarrow\infty}\implies4\pi^2\sigma^2\Gamma_p.
\end{equation*}
Furthermore, as in \cite{LeGall1986}, the formula \eqref{eq:dyadic decomposition of R_n^A} and the bound of Proposition \ref{prop:bound for moments of intersections} yield the variance bound
\begin{equation}
\label{ineq:variance bound}
    \mathrm{Var}(\R_n^{A})\lesssim\frac{n^2}{(\log n)^4}.
\end{equation}
In particular,
\begin{align*}
    \mathrm{Var}\left(\frac{(\log n)^2}{n}\sum_{j=1}^{2^p}\R_n^{A}(j,p)\right)&=\frac{(\log n)^4}{n^2}\sum_{j=1}^{2^p}\mathrm{Var}(\R_{n/2^p}^{A})\lesssim 2^p\frac{(\log n)^4}{n^2}\frac{(n/2^p)^2}{\left(\log(n/2^p)\right)^4}\lesssim 2^{-p}.
\end{align*}
Combining all the results, then after centering and multiplying by $(\log n)^2/n$ in \eqref{eq:dyadic decomposition of R_n^A}, letting $n\rightarrow\infty$ and then $p\rightarrow\infty$, we obtain
\begin{equation*}
    \frac{(\log n)^2}{n}(\R_n^{A}-\EE[\R_n^{A}])\underset{n\rightarrow\infty}\implies-4\pi^2\sigma^2\gamma_1.
\end{equation*}

\subsection{The law of large numbers}

As an easy consequence of preceding results, we obtain the following strong law of large numbers for $\R_n^{A}$.

\begin{theorem}
    \begin{equation*}
        \frac{\log n}{n}\R_n^{A}\underset{n\rightarrow\infty}{\overset{a.s.}{\longrightarrow}}2\pi\sigma^2
    \end{equation*}
\end{theorem}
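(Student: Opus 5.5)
The plan is the classical route: second-moment control along a geometric subsequence, then interpolation using the monotonicity of $n\mapsto\R_n^{A}$. From Theorem \ref{theorem:expansion of the mean} with $N=1$ we already have $\EE[\R_n^{A}]=2\pi\sigma^2 n/\log n+O(n/(\log n)^2)$. So it suffices to show
\begin{equation*}
\frac{\log n}{n}\left(\R_n^{A}-\EE[\R_n^{A}]\right)\longrightarrow 0 \quad\text{almost surely.}
\end{equation*}

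First fix $\rho>1$ and set $n_k:=\lfloor \rho^k\rfloor$. The variance bound \eqref{ineq:variance bound}, $\mathrm{Var}(\R_n^{A})\lesssim n^2/(\log n)^4$, together with Chebyshev's inequality gives for every $\eta>0$
\begin{equation*}
\PP\left(\frac{\log n_k}{n_k}\left|\R_{n_k}^{A}-\EE[\R_{n_k}^{A}]\right|>\eta\right)\lesssim \frac{1}{\eta^2(\log n_k)^2}\asymp\frac{1}{\eta^2k^2}.
\end{equation*}
This is summable in $k$. By Borel--Cantelli, $\frac{\log n_k}{n_k}\R_{n_k}^{A}\to2\pi\sigma^2$ almost surely.

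Next comes the interpolation. For $n_k\le n\le n_{k+1}$, monotonicity of the range gives $\R_{n_k}^{A}\le\R_n^{A}\le\R_{n_{k+1}}^{A}$. Also $\log n/\log n_k\to1$ and $n_{k+1}/n_k\to\rho$. Hence almost surely
\begin{equation*}
\frac{2\pi\sigma^2}{\rho}\le\liminf_{n}\frac{\log n}{n}\R_n^{A}\le\limsup_n\frac{\log n}{n}\R_n^{A}\le 2\pi\sigma^2\rho.
\end{equation*}
Letting $\rho\downarrow1$ along a countable sequence yields the claim.

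The only delicate point is that the variance bound \eqref{ineq:variance bound} must hold for all $n$ with a uniform constant. It is stated in the text as a consequence of the dyadic decomposition \eqref{eq:dyadic decomposition of R_n^A} and Proposition \ref{prop:bound for moments of intersections}, following \cite{LeGall1986}. I will take it as given. If only a weaker bound such as $o(n^2/(\log n)^2)$ were available, a geometric subsequence would not suffice. One would then need a faster-growing subsequence, for instance $n_k=\lfloor e^{k^{1/2}}\rfloor$, which still has $n_{k+1}/n_k\to1$ and makes the monotone interpolation work directly without the $\rho\downarrow1$ step. Summability would then require a bound of order $n^2/(\log n)^{2+4+\epsilon}$, which \eqref{ineq:variance bound} does not give. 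So the geometric choice paired with the $(\log n)^{-4}$ variance is the intended combination.
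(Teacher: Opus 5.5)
Your proposal is correct and is essentially the paper's own proof: Chebyshev with the variance bound \eqref{ineq:variance bound} along $\lfloor r^k\rfloor$, then Borel--Cantelli, then the monotone sandwich and $r\downarrow1$ (the paper phrases the deviation event relative to $\EE[\R_{m_n}^{A}]$, which is equivalent given the leading-order mean asymptotics). Your closing aside is muddled but plays no role in the argument: $\lfloor e^{k^{1/2}}\rfloor$ grows more slowly than a geometric sequence, not faster, and along it summability would require only $\mathrm{Var}(\R_n^{A})=O(n^2/(\log n)^{4+\epsilon})$, not $(\log n)^{6+\epsilon}$.
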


\begin{proof}
    We follow the proof of \cite{LeGallRosen1991}, Theorem $6.9$. Let $r>1$ and $m_n=\lfloor r^n\rfloor$. Then for any $\varepsilon>0$, using Theorem \ref{theorem:expansion of the mean} and the variance bound \eqref{ineq:variance bound},
    \begin{align*}
        \sum_{n\ge1}\PP\left(\left|\R_{m_n}^{A}-\EE\left[\R_{m_n}^{A}\right]\right|>\varepsilon\EE\left[\R_{m_n}^{A}\right]\right)\le\sum_{n\ge1}\frac{\mathrm{Var}\left(\R_{m_n}^{A}\right)}{\varepsilon^2\EE\left[\R_{m_n}^{A}\right]^2}\lesssim \sum_{n\ge1}\frac{1}{n^2}<\infty,
    \end{align*}
    and so by the Borel-Cantelli lemma,
    \begin{equation*}
        \frac{\log \lfloor r^n\rfloor}{\lfloor r^n\rfloor}\R_{\lfloor r^n\rfloor}^{A}\underset{n\rightarrow\infty}{\overset{a.s.}{\longrightarrow}}2\pi\sigma^2.
    \end{equation*}
    Since $\R_n^{A}$ is increasing in $n$, this directly implies that $a.s.$, 
    \begin{equation*}
        2\pi\sigma^2/r\le\liminf \frac{\log n}{n}\R_n^{A}\le\limsup\frac{\log n}{n}\le 2\pi\sigma^2 r,
    \end{equation*}
    and this is sufficient to conclude since $r$ can be taken arbitrarily close to $1$.
\end{proof}

\appendix

\section{Bell polynomials and some explicit coefficients}
\label{appendix:bell polynomials}

Bell polynomials were introduced by Bell in \cite{Bell1934} to study exponentials of power series. For every $n\ge0$, we let $B_n$ denote the $n$-th complete exponential Bell polynomial (see \cite{Bell1934}, section II, with the notation $\phi_n$), where for each $n\ge1$, $B_n$ is a polynomial of the variables $x_1,\dots,x_n\in\CC$ and $B_0=1$. Then, we define the $B_n$ recursively by the recurrence
\begin{equation*}
    B_{n+1}(x_1,\dots, x_{n+1})=\sum_{\ell=0}^n \binom{n}{\ell}x_{\ell+1}B_{n-\ell}(x_1,\dots, x_{n-\ell}).
\end{equation*}
The purpose of this definition is then that the $B_n$ satisfy the fundamental relation
\begin{equation}
\label{eq:fundamental equation of bell polynomials}
    \exp\left(\sum_{\ell\ge1}x_\ell\frac{s^\ell}{\ell!}\right)=\sum_{\ell\ge0} B_\ell(x_1,\dots,x_\ell)\frac{s^\ell}{\ell!},
\end{equation}
and have the following explicit epression
\begin{equation*}
    B_n(x_1,\dots,x_n)=n!\sum_{\substack{k_1,\dots, k_n \\ k_1+2k_2+\dots+nk_n=n}}\frac{x_1^{k_1}\cdots x_n^{k_n}}{(1!)^{k_1}\cdots (n!)^{k_n} k_1!\cdots k_n!}.
\end{equation*}
Note in particular that the number of terms in $B_n(x_1,\dots,x_n)$ is the number of partitions of $n$. The three first complete exponential Bell polynomials are
\begin{align*}
    B_1(x_1)&=x_1,\\
    B_2(x_1,x_2)&=x_1^2+x_2,\\
    B_3(x_1,x_2,x_3)&=x_1^3+3x_1x_2+x_3,
\end{align*}
which leads to the following expressions for the first four coefficients in the asymptotic expansion of $\EE[\R_n^{A}]$,
\begin{align*}
    c(1,A,S)&=2\pi\sigma^2,\\
    c(2,A,S)&=2\pi\sigma^2\rho_{A,S},\\
    c(3,A,S)&=2\pi\sigma^2(\rho_{A,S}^2+1-\pi^2/6),\\
    c(4,A,S)&=2\pi\sigma^2(\rho_{A,S}^3+3\rho_{A,S}(1-\pi^2/6)+2(1-\zeta(3))).
\end{align*}
For more information about Bell polynomials and the link to partitions of $n$, one can consult \cite{Comtet1974} and the tables therein.

\section{Random walk and potential theory estimates}
\subsection{Killed green's function in $\Delta_{\phi,\eta}$}
\label{appendix:complex analysis}
The aim of this section is to establish three Propositions that are used in the proof of Theorem~\ref{theorem:expansion of the mean}. Recall the definition of the domain $\Delta_{\phi,\eta}:=\{z\in\CC\mid |z|< 1+\phi,\text{ }|\mathrm{arg}(z-1)|>\eta\}$. Let 
\begin{equation*}
    G_z(x)=\sum_{n\ge0}z^n\PP(S_n=x),\quad a(x)=\sum_{n\ge0}\left(\PP(S_n=0)-\PP(S_n=x)\right),
\end{equation*}
and define the characteristic function $\varphi$ of $S$ as 
\begin{equation*}
    \varphi(\xi):=\EE[\exp(-\i\langle\xi,S_1\rangle].
\end{equation*}
Also, recall that $L(z)=\mathrm{Log}(1/(1-z))$ for $z\in\Delta_{\phi,\eta}$, where $\mathrm{Log}$ is the principal branch of the complex logarithm. 

\subsubsection{Proof of Proposition \ref{prop:expansion of G_z(x)}}
\label{appendix:proof of expansion of G_z(x)}
    Our expansion is similar to the one of \cite{BassRosen}, Lemma $A.1$, but we are interested in showing it on the domain $\Delta_{\phi,\eta}$ instead of on the real line. Throughout the proof, in order to lighten notation, we shall write $\lambda:=1/2\pi\sigma^2$. For $z\in\CC$, $|z|<1$, Fourier inversion yields
    \begin{equation}
    \label{eq:fourier inversion for G_z(x)}
        G_z(x)=\frac{1}{(2\pi)^2}\int_{\TT^2}\d\xi\frac{\exp(\i\langle x,\xi\rangle)}{1-z\varphi(\xi)}.
    \end{equation}
    Let $r\in(0,\pi)$. If $|\xi|\ge r$, then by strong aperiodicity, we have $\sup_{|\xi|\ge r}|\varphi(\xi)|<1$, whence for $z\in\Delta_{\phi,\eta}$,
    \begin{equation}
    \label{ineq:bound of 1-z phi(xi) for large xi}
        |1-z\varphi(\xi)|\ge1-(1+\phi)\sup_{|\xi|\ge r}|\varphi(\xi)|>0,
    \end{equation}
    so long as $\phi$ is chosen small enough. To treat the case $|\xi|<r$, start by noting that our hypothesis on $S$ yield
    \begin{equation}
    \label{eq:asymptotic expansion of characteristic function}
        \varphi(\xi)=1-\frac{\sigma^2}{2}|\xi|^2+R(\xi),
    \end{equation}
    where $\varepsilon\in(0,1\wedge\delta)$ and $R(\xi)=O(|\xi|^{2+\varepsilon})$. Let $s:=1-z$ and write $s=\rho \exp(\i\theta)$. Then for any $t\ge0$, we have 
    \begin{equation*}
        |s+t|^2=\rho^2+t^2+2\rho t\cos(\theta).
    \end{equation*}
    Since $z\in\Delta_{\phi,\eta}$, we have $|\theta|\le\pi-\eta$, and so $\cos(\alpha)\ge-\cos(\eta)$, whence 
    \begin{equation*}
        |s+t|^2\ge t^2+\rho^2-2t\rho\cos(\eta)\ge (\sin(\eta/2))^2(|s|+t)^2.
    \end{equation*}
    In particular, there is some constant $c_\eta$ such that 
    \begin{equation}
    \label{ineq:lower bound for |s+q|}
        \left|s+\frac{\sigma^2}{2}|\xi|^2\right|\ge c_\eta\left(|s|+|\xi|^2\right).
    \end{equation}
    Now, we write
    \begin{equation*}
        1-z\varphi(\xi)=s+\frac{\sigma^2}{2}|\xi|^2-s\frac{\sigma^2}{2}|\xi|^2+(1-s)R(\xi).
    \end{equation*}
    Take $z\in\Delta_{\phi,\eta}$ sufficiently close to $1$ so that $|s|\le\phi$. For this choice, we have for some $C>0$,
    \begin{align*}
        \left|-s\frac{\sigma^2}{2}|\xi|^2+(1-s)R(\xi)\right|&\le\phi\frac{\sigma^2}{2}|\xi|^2+C(1+\phi)\frac{\sigma^2}{2}|\xi|^{2+\varepsilon}\\
        &\le\left(\phi+C(1+\phi)r^\varepsilon\right)\frac{\sigma^2}{2}|\xi|^2\le\frac{c_\eta}{2}\frac{\sigma^2}{2}|\xi|^2,
    \end{align*}
    so long as we take $\phi$ and $r$ sufficiently small. Therefore by \eqref{ineq:lower bound for |s+q|},
    \begin{align}
    \label{ineq:bound of 1-z phi(xi) for small xi}
        |1-z\varphi(\xi)|\ge\left|s+\frac{\sigma^2}{2}|\xi|^2\right|-\left|s\frac{\sigma^2}{2}|\xi|^2-(1-s)R(\xi)\right|\ge\frac{c_\eta}{2}\left(|1-z|+\frac{\sigma^2}{2}|\xi|^2\right).
    \end{align}
    Combining \eqref{ineq:bound of 1-z phi(xi) for large xi} and \eqref{ineq:bound of 1-z phi(xi) for small xi}, we see that for any compact subset $K\subset\Delta_{\phi,\eta}$, there is $c_K>0$ such that uniformly in $z\in K$, $\xi\in\TT^2$, $|1-z\phi(\xi)|\ge c_K$. In particular, by dominated convergence, we can extend $z\mapsto G_z(x)$ analytically to $\Delta_{\phi,\eta}$. 

    Fix $x=0$, $r$ sufficiently small and $s=1-z$, then write
    \begin{equation*}
        G_{1-s}(0)=I_r^{1}(s)+I_r^{2}(s),
    \end{equation*}
    where 
    \begin{equation*}
        I_r^{1}(s)=\frac{1}{(2\pi)^2}\int_{|\xi|<r}\frac{\d\xi}{s+\frac{\sigma^2}{2}|\xi|^2},
    \end{equation*}
    and
    \begin{align*}
        I_r^{2}(s)&=\frac{1}{(2\pi)^2}\int_{|\xi|<r}\d\xi\left(\frac{1}{1-(1-s)\varphi(\xi)}-\frac{1}{s+\frac{\sigma^2}{2}|\xi|^2}\right)+\frac{1}{(2\pi)^2}\int_{|\xi|\ge r}\frac{\d\xi}{1-(1-s)\varphi(\xi)}\\
       & =: J_r^1(s)+J_r^2(s).
    \end{align*}
    Thanks to \eqref{eq:asymptotic expansion of characteristic function}, $J_r^1(s)$ is well defined at $s=0$. In polar coordinates, we have 
    \begin{align}
    I_r^1(s)&=\frac{1}{2\pi}\int_0^r\frac{u}{s+\frac{\sigma^2}{2}u^2}\d u=\frac{1}{2\pi\sigma^2}\left(\mathrm{Log}\left(s+\frac{\sigma^2r^2}{2}\right)-\mathrm{Log}(s)\right)\nonumber\\
    &=\lambda\mathrm{Log}(1/s)+\lambda\log\left(\frac{\sigma^2r^2}{2}\right)+O(|s|)=\lambda L(z)+\lambda\log\left(\frac{\sigma^2r^2}{2}\right)+O(|1-z|),\label{eq:expansion of I_r^1(s)}
    \end{align}
    where $\mathrm{Log}$ is the principle branch of the complex logarithm.
    
    To study $J_r^{1}(s)$, we control $J_r^1(s)-J_r^1(0)$ for $s<r$. We introduce the shorthand notations $q:=(\sigma^2/2)|\xi|^2$ and $p:=1-\varphi(\xi)-q$ and rewrite the integrand in $J_r^1(s)$ as $f(s,q)+g(s,q)$, where
    \begin{equation*}
        f(s,q):=\frac{1}{s+(1-s)(q+p)}-\frac{1}{s+(1-s)q},
    \end{equation*}
    and
    \begin{equation*}
        g(s,q):=\frac{1}{s+(1-s)q}-\frac{1}{s+q}.
    \end{equation*}
    Taking $s$ sufficiently small and applying \eqref{ineq:lower bound for |s+q|}, we have on one hand 
    \begin{equation}
    \label{ineq:bound for g(s,q)}
        |g(s,q)|=\frac{|s| q}{|s+(1-s)q||s+q|}\lesssim\frac{|s| q}{(|s|+q)^2}.
    \end{equation}
    Note that the integral of the term on the right hand side is $O(|s|\log(1/|s|))$. Reasoning similarly for $f(s,q)$ and also invoking \eqref{eq:asymptotic expansion of characteristic function}, we obtain
    \begin{equation*}
        |f(s,q)|\lesssim\frac{q^{1+\varepsilon/2}}{(|s|+q)^2}.
    \end{equation*}
    In particular, the integral of $f(s,q)-f(0,q)$ over $q\in(0,|s|)$ is $O(|s|^{\varepsilon/2})$. To handle the integral over $q\ge|s|$, we write 
    \begin{equation*}
        f(s,q)-f(0,q)=s\int_0^1 \partial_w f(ws,q)\d w.
    \end{equation*}
    A direct calculation yields $|\partial_w f(ws,q)|\lesssim q^{-2+\varepsilon/2}$. In particular,
    \begin{equation}
    \label{ineq:bound for f(s,q)}
        \int_{|s|}^r|f(s,q)-f(0,q)|\d q\lesssim |s|\int_{|s|}^r\frac{\d q}{q^{2-\varepsilon/2}}=O(|s|^{\varepsilon/2}).
    \end{equation}
    Finally, noting that for $\varepsilon\in(0,1)$, $|s|\log(1/|s|)=O(|s|^{\varepsilon/2})$ and combining \eqref{ineq:bound for g(s,q)} and \eqref{ineq:bound for f(s,q)}, we get
    \begin{equation*}
        J_r^1(s)=J_r^1(0)+O(|s|^{\varepsilon/2}).
    \end{equation*}
    In particular, by letting $\kappa_S:=J_r^{1}(0)+\lambda\log\left(\sigma^2r^2/2\right)$ (which doesn't depend on the choice of $r$, see Lemma \ref{lemma:kappa_S is well defined}) and recalling \eqref{eq:expansion of I_r^1(s)}, we have successfully shown that
    \begin{equation*}
        G_z(0)=\lambda L(z)+\kappa_S+O(|1-z|^{\varepsilon/2}).
    \end{equation*}
    To estimate $G_z(x)$, we note that by Fourier inversion,
    \begin{equation*}
        a(x)=\frac{1}{(2\pi)^2}\int_{\TT^2}\d\xi\frac{1-\exp(\i\langle x,\xi\rangle}{1-\varphi(\xi)},
    \end{equation*}
    whence 
    \begin{equation*}
        G_{1-s}(x)-G_{1-s}(0)+a(x)=-\frac{s}{(2\pi)^2}\int_{\TT^2}\d\xi \frac{\varphi(\xi)(\exp(\i\langle x,\xi\rangle)-1)}{(1-(1-s)\varphi(\xi))(1-\varphi(\xi))}.
    \end{equation*}
    For $|\xi|>1$, strong aperiodicity implies that the integral is $O(1)$. For $|\xi|\le 1$, we have on one hand $|\exp(\i\langle x,\xi\rangle)-1|\lesssim|\xi|$, $|1-\varphi(\xi)|\ge C|\xi|^2$ for some $C>0$ and thanks to \eqref{ineq:lower bound for |s+q|}, for some $\tilde C>0$,
    \begin{equation*}
        |1-(1-s)\varphi(\xi)|\ge\tilde C(|s|+|\xi|^2).
    \end{equation*}
    After passing to polar coordinates, the contribution is thus bounded up to some constant by 
    \begin{equation*}
        |s|\int_{0}^1\frac{\d u}{|s|+u^2}=O(|s|^{1/2}).
    \end{equation*}
    All in all, this gives
    \begin{align*}
        G_z(x)&=G_z(0)-a(x)+O(|1-z|^{1/2})\\
        &=\frac{L(z)}{2\pi\sigma^2}+\kappa_S-a(x)+O(|1-z|^{\varepsilon/2}),
    \end{align*}
    since $\varepsilon<1$.

\subsubsection{Proof of Proposition \ref{prop:G_A is invertible and with holomorphic inverse}}
\label{appendix:proof of G_A is invertible and with holomorphic inverse}

    We start by showing that $G_{z,A}$ is invertible for all $z\in\mathbb D\backslash\{1\}$. Note that for any such $z$, it suffices to show that for any $v\in\CC^{A}\backslash\{0\}$, $\mathfrak R(v^*G_{z,A}v)>0$. Using Fourier inversion to write
    \begin{equation*}
        G_{z,A}(x,y)=\frac{1}{(2\pi)^2}\int_{\TT^2}\d \xi\frac{\exp(\i\langle\xi,x-y\rangle)}{1-z\varphi(\xi)},
    \end{equation*}
    and defining $P_v(\xi):=\sum_{a\in A}v_a\exp(-\i\langle\xi,a\rangle)$, then a direct calculation shows that
    \begin{equation*}
        v^* G_{z,A}v=\frac{1}{(2\pi)^2}\int_{\TT^2}\d\xi\frac{|P_v(\xi)|^2}{1-z\varphi(\xi)},
    \end{equation*}
    and so taking the real part,
    \begin{equation*}
        \mathfrak R(v^*G_{z,A}v)=\frac{1}{(2\pi)^2}\int_{\TT^2}\d\xi|P_v(\xi)|^2\frac{1-\mathfrak R(z\varphi(\xi))}{|1-z\varphi(\xi)|^2}.
    \end{equation*}
    Clearly, since $|z|\le1$ and $S$ is strongly aperiodic which implies that $|\varphi(\xi)|=1$ if and only if $\xi=0$, then $\mathfrak R(z\varphi(\xi))=1$ only if $z=1$, which is excluded. Since $P_v$ isn't identically $0$, we obtain $\mathfrak R(v^*G_{z,A}v)>0$, whence $G_{z,A}$ is invertible for $z\in\mathbb D\backslash\{1\}$. By letting $K_\varepsilon:=\{z\in\mathbb D\mid |1-z|>\varepsilon/2\}$, then the same conclusion holds on $K_\varepsilon$ for any $\varepsilon\in(0,1/2)$. By continuity of the determinant and of the mapping $z\mapsto G_{z,A}$ which follows from the proof of Proposition \ref{prop:expansion of G_z(x)}, since $K_\varepsilon$ is compact we have $\inf_{z\in K_{\varepsilon}}|\det(G_{z,A})|>0$. In particular, for some $\phi>0$ sufficiently small, we also have $\inf_{z\in K_{\varepsilon}^\phi}|\det(G_{z,A})|>0$, where $K_\varepsilon^\phi$ is a $\phi$-neighbourhood of $K_\varepsilon$. Therefore, $G_{z,A}$ is invertible on $K_{\varepsilon}^\phi$ for $\phi$ sufficiently small.

    Define the domain $\Delta_{\phi,\eta}^\varepsilon:=\{z\in\Delta_{\phi,\eta}\mid |1-z|<\varepsilon\}$ for $\varepsilon\in(0,\phi/2)$. Once we show that $G_{z,A}$ is invertible for $z\in\Delta_{\phi,\eta}^\varepsilon$, we will be able to conclude, since $K_{\varepsilon}^\phi\cup\Delta_{\phi,\eta}^\varepsilon$ contains $\Delta_{\phi,\eta}$. To show this is the case, we apply Proposition \ref{prop:expansion of G_z(x)} to write 
    \begin{equation}
    \label{eq:expression of G_(z,A)}
        G_{z,A}=(\lambda L(z)+\kappa_S)J-\mathcal A_A+E_{z,A},
    \end{equation}where $\mathcal A_A:=(a(x-y))_{x,y\in A}$, $\lVert E_{z,A}\rVert\le C|1-z|^{\beta/2}$ for some $C>0$ and we recall that $J=\mathbf 1\mathbf 1^T$. We write $M_{z,A}:=(\lambda L(z)+\kappa_S)J-\mathcal A_A$. If we knew that for the right $\phi,\varepsilon>0$ we had
    \begin{equation}
        \label{statement:M_(z,A) invertible}
        \text{$M_{z,A}$ is invertible for every $z\in\Delta_{\phi,\eta}^\varepsilon$,}
    \end{equation}
    and
    \begin{equation}
        \label{statement:norm of inverse of M_(z,A) uniformly bounded}
        S:=\sup_{z\in\Delta_{\phi,\eta}^\varepsilon}\lVert M_{z,A}^{-1}\rVert<\infty,
    \end{equation}
    then by \eqref{eq:expression of G_(z,A)}, we would have 
    \begin{equation}
    \label{eq:factorized version of G_(z,A)}
        G_{z,A}=M_{z,A}(\mathrm{Id}+M_{z,A}^{-1}E_{z,A}),
    \end{equation}
    where for any $z\in\Delta_{\phi,\eta}^\varepsilon$, taking $\varepsilon$ small enough so that $CS\varepsilon^{\beta/2}<1/2$,
    \begin{equation*}
        \lVert M_{z,A}^{-1}E_{z,A}\rVert\le CS|1-z|^{\beta/2}<1/2.
    \end{equation*}
    This would yield the invertibility of $\mathrm{Id}+M_{z,A}^{-1}E_{z,A}$ for $z\in\Delta_{\phi,\eta}^\varepsilon$, and thus $G_{z,A}$, which would conclude the proof. Therefore, it remains to show \eqref{statement:M_(z,A) invertible} and \eqref{statement:norm of inverse of M_(z,A) uniformly bounded}.

    We start with \eqref{statement:M_(z,A) invertible}. Define the hyperplane 
    \begin{equation*}
        H:=\{v\in\CC^A\mid \mathbf 1^Tv=0\}.
    \end{equation*}
    We show that if $v\in H$ and $\mathcal A_A v\in\mathrm{span}(\mathbf 1)$, then $v=0$. By considering the real and imaginary parts, it is sufficient to show that this holds for $v\in H\cap\RR^{A}$. Using Fourier inversion, we have 
    \begin{equation*}
        a(x)=\frac{1}{(2\pi)^2}\int_{\TT^2}\d\xi\frac{1-\exp(\i\langle\xi,x\rangle)}{1-\varphi(\xi)}.
    \end{equation*}
    In particular, reasoning as we did in the first part of the proof, supposing further that $v\ne 0$,
    \begin{align*}
        v^T\mathcal A_A v&=-\frac{1}{(2\pi)^2}\int_{\TT^2}\d\xi\frac{|P_v(\xi)|^2}{1-\varphi(\xi)}\\
        &=-\frac{1}{(2\pi)^2}\int_{\TT^2}\d\xi|P_v(\xi)|^2\mathfrak R\left(\frac{1}{1-\varphi(\xi)}\right)\\
        &=-\frac{1}{(2\pi)^2}\int_{\TT^2}\d\xi|P_v(\xi)|^2\frac{1-\mathfrak R(\varphi(\xi))}{|1-\varphi(\xi)|^2}<0.
    \end{align*}
    Therefore, if $\mathcal A_A v\in\mathrm{span}(\mathbf 1)$, then for some $\mu\in\CC$, $v^T\mathcal A_A v=\mu v^T\mathbf 1=0$, which implies $v=0$. 

    Recall the harmonic measure of $A$, noted $\mu_A$, satisfies $\mathcal A_A\mu_A=\Cap(A)\mathbf 1$. Let $v\in \ker(M_{z,A})$, and define $\alpha:=\mathbf 1^Tv$, $w=v-\alpha\mu_A$. since $\mu_A$ is a probability measure, we therefore have $w\in H$. We now calculate
    \begin{align*}
        M_{z,A}v&=(\lambda L(z)+\kappa_S) Jv-\mathcal A_Av\\
        &=\alpha(\lambda L(z)+\kappa_S)\mathbf 1-\mathcal A_A(w+\alpha\mu_A)\\
        &=\alpha(\lambda L(z)+\kappa_S-\Cap(A))\mathbf 1-\mathcal A_Aw=0,
    \end{align*}
    whence $\mathcal A_Aw=\alpha(\lambda L(z)+\kappa_S-\Cap(A))\mathbf 1$. By what we just saw, this implies $w=0$, and therefore $v=\alpha\mu_A$ and also $\alpha(\lambda L(z)+\kappa_S-\Cap(A))=0$. Taking $\varepsilon>0$ sufficiently small and $z\in\Delta_{\phi,\eta}^\varepsilon$ guarantees that $\lambda L(z)+\kappa_S-\Cap(A)\ne0$, and so for such $z$ we must have $\alpha=0$, whence $v=\alpha\mu_A=0$, which proves \eqref{statement:M_(z,A) invertible}.

    Let $\pi$ be the projection onto $H$ along $\mathrm{span}(\mathbf 1)$. Define the mapping $f$ from $H$ to $H$ by $f(w)=-\pi(\mathcal A_Aw)$. If $w\in\ker(f)$, then $\pi(\mathcal A_A w)=0$, whence $\mathcal A_A w=c\mathbf 1$ for some $c\in\CC$, so $w=0$ by what we saw previously, which proves that $f$ is injective and thus an isomorphism. In particular, there is $C_f>0$ such that for any $w\in H$, $\lVert f^{-1}(w)\rVert\le C_f\lVert w\rVert$. Now, fix $y\in\CC^{A}$. Let $v\in\CC^{A}$, $\alpha=\mathbf 1^Tv$ and $w=v-\alpha\mu_A$ as before. Suppose that $M_{z,A}v=y$. Then by the same calculation as we performed previously,
    \begin{equation*}
        y=\alpha(\lambda L(z)+\kappa_S-\Cap(A))\mathbf 1-\mathcal A_Aw=:\tilde\alpha\mathbf 1-\mathcal A_A w,
    \end{equation*}
    and applying $\pi$ to both sides we get
    \begin{equation*}
        \pi(y)=-\pi(\mathcal A_Aw)=f(w),
    \end{equation*}
    whence $w=f^{-1}(\pi(y))$ and thus $\lVert w\rVert\le C_f\lVert y\rVert$. Furthermore, since $\tilde\alpha\mathbf 1=y+\mathcal A_Aw$, then this implies that $|\tilde\alpha|\le C'\lVert y\rVert$ for some $C'>0$. In particular,
    \begin{equation*}
        |\alpha|\le\frac{C'\lVert y\rVert}{|\lambda L(z)+\kappa_S-\Cap(A)|}.
    \end{equation*}
    Taking $\varepsilon$ sufficiently small so that the denominator is greater than $1$ for any $z\in\Delta_{\phi,\eta}^\varepsilon$, we get $|\alpha|\le C'\lVert y\rVert$, whence for some $C''>0$,
    \begin{align*}
        \lVert M_{z,A}^{-1} y\rVert=\lVert v\rVert\le|\alpha |\lVert\mu_A\rVert+\lVert w\rVert\le C''\lVert y\rVert,
    \end{align*}
    which concludes the proof of \eqref{statement:norm of inverse of M_(z,A) uniformly bounded}.

\subsubsection{Proof of Corollary \ref{corollaire:dev asymp de 1^T G_(z,A) 1}}
\label{appendix:proof of dev asymp de 1^T G_(z,A) 1}

    Note that $M_{z,A}\mu_A=(\lambda L(z)+\kappa_S-\Cap(A))\mathbf 1$, and so 
    \begin{equation*}
        M_{z,A}^{-1}\mathbf 1=\frac{\mu_A}{\lambda L(z)+\kappa_S-\Cap(A)}.
    \end{equation*}
    Combining this with \eqref{eq:factorized version of G_(z,A)} yields
    \begin{align*}
        \mathbf 1^TG_{z,A}^{-1}\mathbf 1&=\mathbf 1^T\left(\sum_{n\ge0}(-1)^n\left(M_{z,A}^{-1}E_{z,A}\right)^n\right)\frac{\mu_A}{\lambda L(z)+\kappa_S-\Cap(A)}=\mathbf 1^T\frac{\mu_A+O(|1-z|^{\varepsilon/2})}{\lambda L(z)+\kappa_S-\Cap(A)}\\
        &=\frac{1}{\lambda L(z)+\kappa_S-\Cap(A)}+O\left(\frac{|1-z|^{\varepsilon/2}}{L(z)}\right)\\
        &=\sum_{k=1}^N\frac{\left(\Cap(A)-\kappa_S\right)^{k-1}}{\lambda^k L(z)^k}+O\left(\frac{1}{L(z)^{N+1}}\right),
    \end{align*}
where we used the Taylor series expansion of order $N$ of $z\mapsto(\lambda L(z)+\kappa_S-\Cap(A))^{-1}$ in the last line.
\subsection{The constant $\kappa_S$}
\label{appendix:kappa_S}
The aim of this section is to relate the constant $\kappa_S$ to the truncated Green's function $h(n):=\sum_{k=0}^n\PP(S_k=0)$, which will allow us to compare Theorem \ref{theorem:expansion of the mean} to the expansion of the range obtained by Le Gall and Rosen (\cite{LeGallRosen1991}, Theorem $6.9$). We start by showing that the constant $\kappa_S$ is well defined.

\begin{lemma}
    \label{lemma:kappa_S is well defined}
    The mapping $\kappa_S:r\mapsto J_r^1(0)+\lambda\log(\sigma^2 r^2/2)$ is constant on $(0,\pi)$.
\end{lemma}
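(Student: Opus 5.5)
We will show directly that $r\mapsto \kappa_S(r)$ has zero derivative, or equivalently that $\kappa_S(r_2)-\kappa_S(r_1)=0$ for any $0<r_1<r_2<\pi$. Recall that
\begin{equation*}
    J_r^1(0)=\frac{1}{(2\pi)^2}\int_{|\xi|<r}\d\xi\left(\frac{1}{1-\varphi(\xi)}-\frac{1}{\frac{\sigma^2}{2}|\xi|^2}\right)+\frac{1}{(2\pi)^2}\int_{r\le|\xi|,\ \xi\in\TT^2}\frac{\d\xi}{1-\varphi(\xi)}.
\end{equation*}
This is obtained by setting $s=0$ in $J_r^1(s)+J_r^2(s)$. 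Strictly, the quantity $\kappa_S$ as defined uses only the first piece, but $G_{1-s}(0)=I_r^1(s)+J_r^1(s)+J_r^2(s)$ and the constant extracted from it is $J_r^1(0)+J_r^2(0)+\lambda\log(\sigma^2r^2/2)$. So the natural object is the full $s=0$ limit of $I_r^2$, and I will prove constancy for that combination. If the lemma is meant literally with only the first piece, I would first check that $J_r^2(s)\to J_r^2(0)$ with an $O(|s|)$ error, which is clear from \eqref{ineq:bound of 1-z phi(xi) for large xi}, so it must be absorbed in $\kappa_S$ anyway.

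Both integrals are well defined. Near the origin, \eqref{eq:asymptotic expansion of characteristic function} makes the first integrand $O(|\xi|^{-2+\varepsilon})$, hence integrable in dimension two; more simply, $1-\varphi(\xi)=\frac{\sigma^2}{2}|\xi|^2(1+o(1))$ already gives $o(|\xi|^{-2})$. Away from the origin, strong aperiodicity gives $|1-\varphi(\xi)|$ bounded below on $r\le|\xi|$.

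Now take $r_1<r_2$ and subtract. The terms involving $1/(1-\varphi(\xi))$ recombine: over the annulus $\{r_1\le|\xi|<r_2\}$, this term appears in the first integral for $r_2$ and in the second integral for $r_1$, with the same sign. Hence it cancels in the difference. What remains is
\begin{equation*}
    -\frac{1}{(2\pi)^2}\int_{r_1\le|\xi|<r_2}\frac{\d\xi}{\frac{\sigma^2}{2}|\xi|^2}=-\frac{1}{2\pi}\int_{r_1}^{r_2}\frac{2u\,\d u}{\sigma^2u^2}=-\lambda\log\frac{r_2^2}{r_1^2}.
\end{equation*}
This exactly cancels $\lambda\log(\sigma^2r_2^2/2)-\lambda\log(\sigma^2r_1^2/2)$, so the constant does not depend on $r$. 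The condition $r<\pi$ ensures that the disc $\{|\xi|<r\}$ lies inside $\TT^2$, so all these regions are legitimate.

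The only real subtlety is the bookkeeping point above: $\kappa_S$ must include the outer integral $J_r^2(0)$, since otherwise the $r$-dependence from the annulus cannot cancel. I would state this explicitly and note that the proof of Proposition \ref{prop:expansion of G_z(x)} indeed produces $G_z(0)=\lambda L(z)+J_r^1(0)+J_r^2(0)+\lambda\log(\sigma^2r^2/2)+O(|1-z|^{\varepsilon/2})$. The left-hand side does not depend on $r$, which gives an independent second proof: letting $z\to1$, the difference of the two expressions for $r_1,r_2$ is $O(|1-z|^{\varepsilon/2})$, so it must vanish.
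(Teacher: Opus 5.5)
Your proof is correct and uses the same annulus/polar-coordinates subtraction as the paper. You are also right that $\kappa_S$ must include the outer piece $J_r^2(0)$: the paper's computation silently drops the $1/(1-\varphi)$ term on the annulus, which is only legitimate with that correction, and it matches the later characterization $\kappa_S=\lim_{z\to1^-}\bigl(G_z(0)-\lambda\log(1/(1-z))\bigr)$.

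One caution: your aside that $1-\varphi(\xi)=\frac{\sigma^2}{2}|\xi|^2(1+o(1))$ alone gives integrability is false. A bound of $o(|\xi|^{-2})$ need not be integrable near $0$ in dimension two, so rely on the $O(|\xi|^{-2+\varepsilon})$ bound coming from the $2+\delta$ moment.
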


\begin{proof}
    Let $0<r_1<r_2<\pi$. Then using polar coordinates,
    \begin{align*}
        \kappa_S(r_2)-\kappa_S(r_1)&=-\frac{2/\sigma^2}{(2\pi)^2}\int_{r_1<|\xi|<r_2}\frac{\d\xi}{|\xi|^2}+\frac{1}{2\pi\sigma^2}\log\left(\frac{r_2^2}{r_1^2}\right)\\
        &=-\frac{1}{\pi\sigma^2}\int_{r_1}^{r_2}\frac{\d u}{u}+\frac{1}{\pi\sigma^2}\log\left(\frac{r_2}{r_1}\right)=0.\qedhere
    \end{align*}
\end{proof}
According to Proposition \ref{prop:expansion of G_z(x)}, and recalling the notation $\lambda=1/2\pi\sigma^2$, $\kappa_S$ can be defined as
\begin{equation*}
    \kappa_S=\lim_{\substack{z\rightarrow1 \\ z<1}}\left(G_z(0)-\lambda\log\left(\frac{1}{1-z}\right)\right).
\end{equation*}
Therefore, it matches the constant $c_X$ of Bass and Rosen (see \cite{BassRosen}, Lemma $A.1$).

Under the finite $2+\delta$ moment assumption, we have the following strengthening of the local limit theorem,
\begin{equation}
\label{eq:strengthened local limit theorem}
    \PP(S_n=0)=\frac{\lambda}{n}+O(n^{-1-\varepsilon/2}),
\end{equation}
for any $\varepsilon\in(0,1\wedge\delta)$. Therefore, we immediately get the asymptotic expansion
\begin{equation}
\label{eq:asymptotic expansion of truncated green's function}
    h(n)=\lambda\log n+c_S+o(1),
\end{equation}
where $c_S:=1+\lambda C_E+\sum_{k\ge1}\left(\PP(S_k=0)-\lambda/k\right)$ and where $C_E$ is the Euler Mascheroni constant. Note that, as $\kappa_S$, $c_S$ is a constant that depends only on the distribution of $S$. We show the following link between $c_S$ and $\kappa_S$.
\begin{lemma}
    \begin{equation*}
        \kappa_S=c_S-\lambda C_E.
    \end{equation*}
\end{lemma}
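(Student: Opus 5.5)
The plan is to compute $\kappa_S$ directly from its characterization as a limit along the real axis, established just above:
\begin{equation*}
    \kappa_S=\lim_{z\uparrow 1}\left(G_z(0)-\lambda\log\frac{1}{1-z}\right).
\end{equation*}
I would then compare the result with the explicit formula $c_S=1+\lambda C_E+\sum_{k\ge1}\left(\PP(S_k=0)-\lambda/k\right)$. Since $c_S-\lambda C_E=1+\sum_{k\ge1}\left(\PP(S_k=0)-\lambda/k\right)$, it suffices to show that the limit above equals this last quantity.

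The key step is to split the generating function of the return probabilities using $\PP(S_0=0)=1$ and the elementary identity $\sum_{k\ge1}z^k/k=\log(1/(1-z))$, valid for $z\in(0,1)$. This gives
\begin{equation*}
    G_z(0)=1+\sum_{k\ge1}z^k\left(\PP(S_k=0)-\frac{\lambda}{k}\right)+\lambda\log\frac{1}{1-z},\qquad z\in(0,1).
\end{equation*}
So $G_z(0)-\lambda\log(1/(1-z))$ is exactly $1+\sum_{k\ge1}z^k\left(\PP(S_k=0)-\lambda/k\right)$.

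Next I would let $z\uparrow1$. The strengthened local limit theorem \eqref{eq:strengthened local limit theorem} gives $|\PP(S_k=0)-\lambda/k|=O(k^{-1-\varepsilon/2})$, so the series $\sum_{k\ge1}\left(\PP(S_k=0)-\lambda/k\right)$ converges absolutely. Dominated convergence, or equivalently Abel's theorem, then lets us pass to the limit term by term. This yields $\kappa_S=1+\sum_{k\ge1}\left(\PP(S_k=0)-\lambda/k\right)=c_S-\lambda C_E$.

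No step here is a real obstacle. The only point requiring care is the identification of $\kappa_S$ with the real-axis limit, which follows from Proposition~\ref{prop:expansion of G_z(x)} since $(0,1)\subset\Delta_{\phi,\eta}$ near $1$. As a sanity check, summing by parts through $G_z(0)=(1-z)\sum_n z^n h(n)$ together with \eqref{eq:asymptotic expansion of truncated green's function} and the Abelian asymptotic $\sum_n z^n\log n=(L(z)-C_E)/(1-z)+o(1/(1-z))$ would give the same constant.
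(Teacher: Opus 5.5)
Your proposal is correct and follows the paper's proof: you rewrite $G_z(0)-\lambda\log(1/(1-z))$ as $1+\sum_{k\ge1}z^k(\PP(S_k=0)-\lambda/k)$ via the Taylor series of $\log(1/(1-z))$, then let $z\uparrow1$ by dominated convergence using \eqref{eq:strengthened local limit theorem}. You also note explicitly that the real segment near $1$ lies in $\Delta_{\phi,\eta}$, which is what lets Proposition~\ref{prop:expansion of G_z(x)} identify $\kappa_S$ with this real-axis limit; the paper only asserts this step.
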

\begin{proof}
    Since the Taylor series of $\log(1/(1-z))$ is $\sum_{k\ge1} z^k/k$, we simply rewrite
    \begin{equation*}
        G_z(0)-\lambda\log\left(\frac{1}{1-z}\right)=1+\sum_{k\ge1}z^k\left(\PP(S_k=0)-\frac{\lambda}{k}\right),
    \end{equation*}
    and thanks to \eqref{eq:strengthened local limit theorem} and dominated convergence, the right hand side converges to $c_S-\lambda C_E$ as $z\rightarrow1$, which concludes.
\end{proof}
According to \cite{LeGallRosen1991}, Theorem $6.9$, we have 
\begin{equation*}
    \EE[\R_n]=\frac{n}{h(n)}+\lambda\frac{n}{h(n)^2}+o\left(\frac{n}{h(n)^2}\right).
\end{equation*}
Applying \eqref{eq:asymptotic expansion of truncated green's function} yields
\begin{align*}
    \EE\left[\R_n\right]&=\frac{n}{\lambda\log n+c_S+o(1)}+\lambda\frac{n}{(\lambda\log n+c_S+o(1))^2}+o\left(\frac{n}{(\log n)^2}\right)\\
    &=\lambda^{-1}\frac{n}{\log n}+\lambda^{-1}\left(1-\lambda^{-1}c_S\right)\frac{n}{(\log n)^2}+o\left(\frac{n}{(\log n)^2}\right)\\
    &=2\pi\sigma^2\frac{n}{\log n}+2\pi\sigma^2\left(1-C_E-2\pi\sigma^2\kappa_S\right)\frac{n}{(\log n)^2}+o\left(\frac{n}{(\log n)^2}\right),
\end{align*}
which agrees with Theorem \ref{theorem:expansion of the mean} in the case $A=\{0\}$, $N=2$.

\paragraph{Acknowledgements.}  We thank Amine Asselah for suggesting the problem to us and for careful proofreading of this work, as well as Quentin Berger for useful comments about the presentation of the work. M.B. is grateful to the CNRS project MITI and to the Chair “Modélisation Mathématique et Biodiversité of VEOLIA-Ecole Polytechnique-MNHN-F.X., by the European Union (ERC, SINGER, 101054787) for funding his work. 

\paragraph{AI statement.} This work is the fruit of human collaboration. All the results and proofs presented here have been found and written down without any use of AI.

\printbibliography[title={Bibliography}]

\end{document}